\newcommand{\C}{\mathbb{C}}
\newcommand{\N}{\mathbb{N}}
\newcommand{\Z}{\mathbb{Z}}
\newcommand{\Q}{\mathbb{Q}}
\newcommand{\F}{\mathbb{F}}
\newcommand{\A}{\mathbb{A}}
\newcommand{\Gal}{\operatorname{Gal}}
\newcommand{\ad}{\operatorname{ad}}
\newcommand{\ab}{\operatorname{ab}}
\renewcommand{\ss}{\operatorname{ss}}
\newcommand{\der}{\operatorname{der}}
\newcommand{\std}{\operatorname{std}}
\newcommand{\SO}{\mathrm{SO}}
\newcommand{\GL}{\mathrm{GL}}
\newcommand{\SL}{\mathrm{SL}}
\newcommand{\GO}{\mathrm{GO}}
\def\bG{\mathbf{G}}
\def\bH{\mathbf{H}}
\def\bT{\mathbf{T}}
\newcommand{\hto}{\hookrightarrow}
\newtheorem{theorem}{Theorem}[section]
\newtheorem*{theorem*}{Theorem}
\newtheorem{lemma}[theorem]{Lemma}
\newtheorem{definition}[theorem]{Definition}
\newtheorem{proposition}[theorem]{Proposition}
\newtheorem{corollary}[theorem]{Corollary}
\theoremstyle{remark}
\newtheorem{remark}[theorem]{Remark}
\title{Monodromy and irreducibility of type $A_1$ automorphic Galois representations}
\author{Chun-Yin Hui}
\address{Department of
 Mathematics, The University of Hong
 Kong, Pokfulam, Hong Kong}
\email{pslnfq@gmail.com}
\author{Wonwoong Lee}
\address{ Department of
 Mathematics, The University of Hong
 Kong, Pokfulam, Hong Kong}
\email{leeww041@hku.hk}
\subjclass[2020]{11F80, 11F70, 11F22, 20G05}
\thanks{}
\date{\today}
\begin{document}

\begin{abstract}
Let $K$ be a totally real field and $\pi$ be a regular algebraic polarized cuspidal automorphic representation of $\GL_n(\A_K)$.
Let $\{\rho_{\pi,\lambda}:\Gal_K\to\GL_n(\overline E_\lambda)\}_\lambda$ be
the compatible system of Galois representations attached to $\pi$ and denote by $\bG_\lambda$ the algebraic monodromy group of $\rho_{\pi,\lambda}$.
Suppose there exists $\lambda_0$ such that 
(a) $\rho_{\pi,\lambda_0}$ is irreducible; 
(b) $\bG_{\lambda_0}$ is connected and of type $A_1$;
and (c) the tautological representation of $\bG_{\lambda_0}$ 
is of a certain type. We prove that 
\begin{itemize}
\item $\bG_{\lambda,\C}\subset\GL_{n, \C}$ is independent of $\lambda$;
\item $\rho_{\pi,\lambda}$ is irreducible for all $\lambda$, and residually irreducible for almost all $\lambda$.
\end{itemize}
Moreover, if $K=\Q$ or $n$ is odd, we prove that the same conclusions hold without the assumption that $\pi$ is polarized. We also prove that if $K=\mathbb Q$, then the compatible system $\{\rho_{\pi,\lambda}\}_\lambda$ is constructed from certain two-dimensional modular compatible systems up to twist.
\end{abstract}

\maketitle

\section{Introduction}
Let $K$ be a totally real or CM field, and let $\pi$ be a regular algebraic cuspidal automorphic representation of $\GL_n(\A_K)$.
There is a \emph{compatible system} (Definition \ref{csdef}) attached to $\pi$
\begin{equation}\label{acs}
\{\rho_{\pi,\lambda}:\Gal_K\to\GL_n(\overline E_\lambda)\cong\GL_n(\overline\Q_\ell)\}_\lambda
\end{equation}
consisting of semisimple $\ell$-adic representations of $K$ defined over some number field $E$ that satisfies local-global compatibility away from $\ell$ \cite{HLTT16},\cite{Sch15},\cite{Var18}.
If $\pi$ is in addition polarized, \eqref{acs} is a \emph{strictly compatible system}
(Definition \ref{def_weakly compatible system}) and 
satisfies full local-global compatibility, see \cite[$\mathsection2.1$]{BLGGT14} and 
the references therein for details. It is a well-known conjecture in folklore that such automorphic Galois representations $\rho_{\pi,\lambda}$ are irreducible for all $\lambda$ (see \cite{Ram08}).
Although this conjecture has been extensively studied for small $n$ 
(see \cite[$\mathsection1$]{Hui23b} for details), for general $n$ and polarized $\pi$ it is proven that irreducibility holds for infinitely many $\lambda$ by Patrikis-Taylor \cite{PT15}, in which strict compatibility 
of \eqref{acs} and the potential automorphy theorems \cite{BLGGT14} serve as two key ingredients.
By combining the two ingredients with some $\lambda$-independence and big image results on monodromy groups in a (strictly) compatible system, 
the first author proved that $\rho_{\pi,\lambda}$ is irreducible for almost all $\lambda$
in the case that $n\leq 6$ and $\pi$ is polarized \cite{Hui23a}.
This strategy is then employed in \cite{Hui23b} to study $\lambda$-independence 
of algebraic monodromy groups of some $4$-dimensional irreducible
strictly compatible system of $\Q$. 

Let $\bG_\lambda \subset \GL_{n,\overline E_\lambda}$ denote the algebraic monodromy group of $\rho_{\pi,\lambda}$, defined as the Zariski closure of the image of $\rho_{\pi,\lambda}$ in $\GL_{n,\overline E_\lambda}$.
In this article, we apply the techniques from \cite{Hui23a,Hui23b} to prove
new case of the irreducibility conjecture for totally real $K$ and general $n$ (instead of small $n$) 
on the whole family $\{\rho_{\pi,\lambda}\}_\lambda$
if for some prime $\lambda_0$ the algebraic monodromy group $\bG_{\lambda_0}$
satisfies certain conditions.

We fix some notation and terminology.
Let $F$ be an algebraically closed field of characteristic zero and $G$ be a reductive group defined over $F$.
We say that $G$ is of \emph{type $A$} (resp. \emph{type $A_1$}) if every simple factor of the 
Lie algebra $\mathfrak{g}=\mathrm{Lie}(G)$ is isomorphic to some $\mathfrak{sl}_k$ (resp. $\mathfrak{sl}_2$).
For example, $\GL_2\times\GL_3$ is of type $A$ and $\GL_2\times\GL_2$ is of type $A_1$.
Denote by $G^{\der}$ the derived group $[G^\circ,G^\circ]$ of the identity component $G^\circ$.
If $G\subset\GL_{n,F}$ is a connected reductive subgroup
that is irreducible on the ambient space, 
the tautological representation $\rho$ of $G^{\der}$ is also irreducible
and admits an
exterior tensor decomposition 
\begin{equation}\label{etd}
(G^{\der},\rho)=(G_1G_2\cdots G_r,\rho_1 \otimes \rho_2 \otimes \cdots \otimes \rho_r)
\end{equation}
such that $G_i$ are almost simple factors of $G^{\der}$ and $(G_i,\rho_i)$ 
are irreducible representations for all $1\leq i\leq r$. We call $(G_i,\rho_i)$ for $1\leq i\leq r$ the \emph{basic factors} of 
the decomposition. For example, $(\SO_4,\std)=(\SL_2\SL_2,\std\otimes\std)$. If $F'$ is a field containing $F$, denote by $G_{F'}$ the base change $G\times_F F'$.

\begin{theorem}\label{thm_main theorem}
    Suppose $K$ is a totally real field and $\{\rho_{\pi, \lambda}: \mathrm{Gal}_{K} \to \mathrm{GL}_n(\overline E_{\lambda})\}_{\lambda}$ is the strictly compatible system of $K$ (defined over $E$) attached to a regular algebraic polarized cuspidal automorphic representation $\pi$ of $\mathrm{GL}_n(\mathbb A_{K})$ such that there exists a prime $\lambda_0$ satisfying the following conditions.
    \begin{enumerate}[\normalfont(a)]
        \item The representation $\rho_{\pi, \lambda_0}$ is irreducible. 
        \item The algebraic monodromy group $\bG_{\lambda_0}$ is connected and of type $A_1$.
        \item \begin{enumerate}[(1)]
				\item At most one basic factor in the exterior tensor decomposition of 
				the tautological representation of $\bG_{\lambda_0}^{\mathrm{der}}$ is $(\mathrm{SL}_{2},\mathrm{std})$,
				or 
				\item the tautological representation of $\bG_{\lambda_0}^{\mathrm{der}}$ is $(\SO_4,\std)$. 
				\end{enumerate}
    \end{enumerate}
    
    Then the following statements hold.
    \begin{enumerate}[\normalfont(i)]
        \item Embed $\overline E_\lambda$ in $\C$ for all $\lambda$. The conjugacy class of $\bG_{\lambda,\C}$ in $\mathrm{GL}_{n,\C}$ is independent of $\lambda$. 
        \item The representation $\rho_{\pi, \lambda}$ is irreducible for all $\lambda$.
				\item The representation $\rho_{\pi, \lambda}$ is residually irreducible for almost all $\lambda$.
   \end{enumerate}
\end{theorem}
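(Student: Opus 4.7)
The plan is to follow the framework of \cite{Hui23a,Hui23b}, combining strict compatibility, the potential automorphy theorems of \cite{BLGGT14,PT15}, and $\lambda$-independence of algebraic monodromy in compatible systems, with the input that irreducible representations of type $A_1$ groups are essentially transparent. By (a) and (b), $\bG_{\lambda_0}$ is connected and of type $A_1$, and acts irreducibly on the tautological space; hence $\bG_{\lambda_0}^{\der}$ is a central quotient of some $\SL_2^r$, and its tautological representation has the form $\Sym^{k_1}\boxtimes\cdots\boxtimes\Sym^{k_r}$ with $k_i\geq 1$. From strict compatibility, the characteristic polynomials of Frobenius at unramified primes are $\lambda$-independent, so the formal character (in the sense of Larsen--Pink) of $\bG_{\lambda,\C}\subset\GL_{n,\C}$ is $\lambda$-independent as well. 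The crucial invariant-theoretic input is that an irreducible type-$A_1$ subgroup of $\GL_{n,\C}$ is determined up to $\GL_{n,\C}$-conjugacy by its formal character \emph{unless} its tautological decomposition contains two or more $(\SL_2,\std)$ basic factors; condition~(c) is tailored precisely to exclude this ambiguity, with the remaining $\SO_4$ case pinned down by its own orthogonal structure. This yields conclusion~(i).

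For (ii), I would apply Patrikis--Taylor \cite{PT15}: essential self-duality of $\pi$ together with the connectedness of $\bG_{\lambda_0}$ and~(a) forces $\rho_{\pi,\lambda}$ to be irreducible for a positive-density set of $\lambda$, in particular for infinitely many $\lambda$ where $\bG_\lambda$ is connected. Combined with the $\lambda$-independence of the component group $\bG_\lambda/\bG_\lambda^\circ$ (a Chebotarev consequence of strict compatibility), every $\bG_\lambda$ must then be connected; conclusion~(i) then identifies $\bG_{\lambda,\C}$ with $\bG_{\lambda_0,\C}$ up to conjugacy in $\GL_{n,\C}$, and since the latter acts irreducibly, so does $\rho_{\pi,\lambda}$ for all $\lambda$.

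For (iii), I would invoke the big-image and Larsen--Pink type density arguments of \cite{Hui23a,Hui23b}: for almost all $\ell$, the image of $\bar\rho_{\pi,\lambda}^{\ss}$ has Zariski closure in an appropriate $\F_\ell$-integral model whose identity component is reductive of type $A_1$ with the same exterior tensor structure as $\bG_{\lambda,\C}$, after which the invariant-theoretic dichotomy of paragraph one forbids any proper semisimple subgroup with the same character, yielding residual irreducibility. The main obstacle throughout is precisely the invariant-theoretic ambiguity that (c) is designed to kill: with two or more $(\SL_2,\std)$ basic factors present, distinct $\GL_n$-conjugacy classes of type-$A_1$ subgroups can share the same formal character, so formal-character $\lambda$-independence alone fails to imply $\GL_n$-conjugacy $\lambda$-independence. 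Handling the $\SO_4$ exception cleanly and transporting the invariant-theoretic dichotomy to the mod-$\ell$ setting, especially at small primes, is where the technical work concentrates.
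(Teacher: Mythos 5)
Your plan for conclusion~(i) has a genuine gap. You claim that $\lambda$-independence of the formal character, combined with the Larsen--Pink uniqueness theorem, pins down $\bG_{\lambda,\C}$ up to $\GL_{n,\C}$-conjugacy for \emph{every} $\lambda$. But Larsen--Pink's theorem recovers a connected \emph{semisimple} group from the formal character of a \emph{faithful irreducible} representation; for a general $\lambda$ you do not yet know that $\bG_\lambda$ is of type $A_1$, that $\rho_{\pi,\lambda}$ is irreducible, or even that $\bG_\lambda$ is not simply a torus. (Note that a maximal torus $T$ of $\bG_{\lambda_0}$ has the same formal character in $\GL_{n,\C}$ as $\bG_{\lambda_0}$ itself, yet $T \subsetneq \bG_{\lambda_0}$.) Since (ii) is deduced from (i), using the irreducibility of $\rho_{\pi,\lambda}$ to prove (i) is circular. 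Patrikis--Taylor together with Larsen--Pink does give $\bG_{\lambda,\C}$ conjugate to $\bG_{\lambda_0,\C}$ for a positive-density set of $\lambda$ (this is Proposition~\ref{infinitelambda}), but that is not all $\lambda$.

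The paper bridges this gap with an auxiliary compatible system built from the adjoint representation. One forms $\phi_{\lambda_0}$ on $\mathrm{Lie}(\bG_{\lambda_0}^{\der})$, which decomposes as $\phi_{\lambda_0,1} \oplus \cdots \oplus \phi_{\lambda_0,r}$ with each summand $3$-dimensional with monodromy $\SO_3$. A Hodge--Tate lifting argument (Patrikis, Proposition~\ref{prop_Hodge-Tate lift}) shows each $\phi_{\lambda_0,i}$ is regular, and then potential automorphy for $\SO_3$-type $3$-dimensional representations (Proposition~\ref{prop_SO3 potential automorphy}) shows each extends to a strictly compatible system $\{\phi_{\lambda,i}\}_\lambda$. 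By formal bi-character independence (Theorem~\ref{thm_formal bi-char indep}) the monodromy of $\phi_\lambda := \bigoplus_i \phi_{\lambda,i}$ is $\SO_3^r$ for \emph{every} $\lambda$. Applying Goursat's lemma to the monodromy of $\rho_{\pi,\lambda} \oplus \phi_\lambda$ then forces $\bG_\lambda$ to be of type $A_1$ for all $\lambda$, and a root-system-matching argument combined with Proposition~\ref{invroot} yields the conjugacy statement (i). This transfer through the adjoint system is the key idea absent from your proposal; formal character alone, even with condition~(c), is not enough to control $\bG_\lambda$ at the $\lambda$ where irreducibility is not yet known. Your outlines of (ii) and (iii), granting (i), are consistent with the paper's route.
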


Here, ``$\rho_{\pi,\lambda}$ is residually irreducible'' in (iii) means that the semisimplification of the mod $\ell$ reduction $$\overline{\rho}_{\pi,\lambda}^{\ss}: \mathrm{Gal}_K \to \mathrm{GL}_n(\overline\F_{\lambda})$$ 
is irreducible, where $\F_{\lambda}$ is the residue field of $E_{\lambda}$.  
We assume Theorem \ref{thm_main theorem}(c1) because we need to apply a result of Larsen-Pink \cite{LP90}
on when an irreducible faithful representation of a connected semisimple group
is determined by its formal character.
Since the standard representation $(\mathrm{SL}_{2},\mathrm{std})$ is the only possible 
$2$-dimensional basic factor of $\bG_{\lambda_0}^{\mathrm{der}}$, we have the following consequence.

\begin{corollary}
    Suppose $K$ is a totally real field and $\{\rho_{\pi, \lambda}: \mathrm{Gal}_{K} \to \mathrm{GL}_n(\overline E_{\lambda})\}_{\lambda}$ is the compatible system of $K$ (defined over $E$) attached to a regular algebraic polarized cuspidal automorphic representation $\pi$ of $\mathrm{GL}_n(\mathbb A_{K})$ such that there exists a prime $\lambda_0$ that satisfies the conditions Theorem~\ref{thm_main theorem}{\normalfont(a)},{\normalfont(b)}. If $n$ is odd or $n=2n'$ with an odd integer $n'$ or $n=4$,  then the conclusions of Theorem~\ref{thm_main theorem} hold.
\end{corollary}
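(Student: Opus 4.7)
The plan is to deduce the corollary directly from Theorem \ref{thm_main theorem} by showing that, under the arithmetic restriction on $n$, the hypotheses (a) and (b) alone already force condition (c) of the theorem. The entire argument is thus a combinatorial check on the possible exterior tensor decompositions of an $n$-dimensional faithful irreducible representation of a connected semisimple group of type $A_1$.

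First I would unpack condition (b): since $\bG_{\lambda_0}$ is connected of type $A_1$, the derived group $\bG_{\lambda_0}^{\der}$ is isogenous to a product of $r$ copies of $\SL_2$ for some $r \geq 1$. Combined with irreducibility (a), the tautological representation decomposes along \eqref{etd} as $\rho_1 \otimes \cdots \otimes \rho_r$, where each basic factor $(G_i, \rho_i)$ is a nontrivial irreducible representation of an $\SL_2$-factor and hence of the form $(\SL_2, \Sym^{k_i - 1})$ for some $k_i \geq 2$. Consequently $n = k_1 k_2 \cdots k_r$, and the crucial observation is that $(\SL_2, \std)$ is the unique basic factor of dimension $2$; counting the number of $(\SL_2, \std)$ factors is therefore counting the indices $i$ with $k_i = 2$.

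Next I would run the case analysis on $n$. If $n$ is odd, every $k_i$ is odd and no $k_i$ equals $2$, so (c1) holds vacuously. If $n = 2n'$ with $n'$ odd, then $n$ contains exactly one factor of $2$, so at most one $k_i$ is even and in particular at most one equals $2$; hence (c1) holds. Finally, if $n = 4$, then either $r = 1$ with $k_1 = 4$ (the sole basic factor is $(\SL_2, \Sym^3)$, so (c1) holds with zero $(\SL_2, \std)$ factors), or $r = 2$ with $k_1 = k_2 = 2$ (both basic factors are $(\SL_2, \std)$, so the tautological representation of $\bG_{\lambda_0}^{\der}$ equals $(\SL_2 \SL_2, \std \otimes \std) = (\SO_4, \std)$, and (c2) holds).

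In every case condition (c) of Theorem \ref{thm_main theorem} is satisfied, and invoking that theorem yields conclusions (i), (ii), (iii). There is no real obstacle; the only minor point to check is that each basic factor is nontrivial (i.e.\ $k_i \geq 2$), which holds because the tautological representation of $\bG_{\lambda_0}$ is faithful and no almost simple factor of $\bG_{\lambda_0}^{\der}$ can act trivially on it.
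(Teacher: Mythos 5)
Your proof is correct and follows the same route the paper intends: the remark immediately preceding the Corollary observes that $(\SL_2,\std)$ is the only possible $2$-dimensional basic factor of $\bG_{\lambda_0}^{\der}$, from which the arithmetic restriction on $n = k_1\cdots k_r$ (each $k_i\ge 2$) forces either at most one $k_i=2$ (giving (c1)) or, when $n=4$, the case $k_1=k_2=2$ giving $(\SO_4,\std)$ (i.e.\ (c2)). Your case analysis is exactly this check, carried out explicitly.
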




Under certain conditions, we can drop the ``polarizability'' condition on $\pi$ in Theorem~\ref{thm_main theorem}.

\begin{theorem}\label{thm_main thm wo polarizability}
    Suppose $K$ is a totally real field and $\{\rho_{\pi, \lambda}: \mathrm{Gal}_{K} \to \mathrm{GL}_n(\overline E_{\lambda})\}_{\lambda}$ is the compatible system of $K$ (defined over $E$) attached to a regular algebraic cuspidal automorphic representation $\pi$ of $\mathrm{GL}_n(\mathbb A_{K})$ such that there exists a prime $\lambda_0$ that satisfies the conditions Theorem~\ref{thm_main theorem}{\normalfont(a)}--{\normalfont(c)}. We further assume either 
    \begin{enumerate}[\normalfont(a)]
        \item $K=\mathbb Q$, or
        \item $n$ is odd,
    \end{enumerate} 
    then the conclusions of Theorem~\ref{thm_main theorem} hold.
\end{theorem}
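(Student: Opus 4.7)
The plan is to reduce Theorem \ref{thm_main thm wo polarizability} to Theorem \ref{thm_main theorem} by proving that, under either extra hypothesis (a) or (b), the cuspidal $\pi$ is already essentially self-dual and moreover carries a polarization of the sign required by \cite{BLGGT14}; the reduction itself is then immediate. Starting on the Galois side, since $\bG_{\lambda_0}^{\der}$ is of type $A_1$, every basic factor of its tautological representation is of the form $(\SL_2,\Sym^{d_i}\std)$, and these are all self-dual (indeed every finite-dimensional representation of $\SL_2$ is). Hence the tautological representation of $\bG_{\lambda_0}^{\der}$ preserves a non-degenerate bilinear form; the centre of $\bG_{\lambda_0}$ acts by scalars on the irreducible ambient space, so $\bG_{\lambda_0}\subset\GO_n$ or $\GSp_n$ and $\rho_{\pi,\lambda_0}^\vee\cong\rho_{\pi,\lambda_0}\otimes\chi$ for some algebraic Galois character $\chi$. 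Via global class field theory $\chi$ corresponds to an algebraic Hecke character $\eta$, and local--global compatibility away from $\ell$ (\cite{HLTT16,Sch15,Var18}) identifies the Satake parameters of $\pi^\vee$ and $\pi\otimes\eta$ at almost all places, so strong multiplicity one gives $\pi^\vee\cong\pi\otimes\eta$.

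It remains to upgrade essential self-duality to a polarization of the correct sign. In case (b), $n$ odd forces each $d_i$ to be even, so every basic factor $\Sym^{d_i}\std$ is orthogonal and factors through $\SO_{d_i+1}$; the tautological representation is therefore orthogonal, and combined with the regular algebraic condition at the archimedean places of the totally real $K$ this pins $\eta$ down to be totally odd, so $\pi$ is polarized in the sense of \cite{BLGGT14}. In case (a), one instead leverages modularity over $\Q$: after passing to a finite Galois extension that trivialises the permutation of the almost simple factors by $\Gal_\Q$, the two-dimensional projections of $\rho_{\pi,\lambda_0}$ onto each $\SL_2$ component are odd irreducible two-dimensional representations of an open subgroup of $\Gal_\Q$, to which Serre's conjecture (Khare--Wintenberger, Kisin) applies, producing modular compatible systems; their symmetric powers, tensor products, and twists recover $\{\rho_{\pi,\lambda}\}_\lambda$ up to a global twist, and since modular two-dimensional compatible systems are polarizable and these operations preserve polarizability, $\pi$ is polarized. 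This simultaneously yields the stronger construction-from-modular-forms statement announced in the abstract.

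With polarizability established, Theorem \ref{thm_main theorem} applies and gives conclusions (i)--(iii). The principal obstacle is the sign/type verification for the polarization: in (b) it is a concrete parity computation on the exponents $d_i$, whereas (a) requires the more substantial input of modularity of the two-dimensional projections and a careful inductive reconstruction of $\pi$ from modular forms. For even $n$ over a proper totally real extension neither route is available and one could a priori obtain a symplectic polarization of the wrong parity, losing access to the potential automorphy theorems of \cite{BLGGT14} used in Theorem \ref{thm_main theorem}, which is why the theorem is restricted to (a) or (b).
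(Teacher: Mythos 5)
Your overall reduction strategy is right: show $\pi$ is essentially self-dual (in the polarized sense required by \cite{BLGGT14}) and invoke Theorem~\ref{thm_main theorem}. The paper does exactly this via Proposition~\ref{prop_essential self-duality}. However, there is a genuine gap at the crucial step, and your case-(a) argument is on a different and problematic track.

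The gap: you write that the tautological self-duality of $\bG_{\lambda_0}^{\der}$ gives $\rho_{\pi,\lambda_0}^\vee\cong\rho_{\pi,\lambda_0}\otimes\chi$ ``for some algebraic Galois character $\chi$,'' and then pass to a Hecke character via class field theory. But nothing in the type-$A_1$ structure tells you $\chi$ is algebraic; a priori $\chi$ is merely a continuous $1$-dimensional $\ell_0$-adic character, and an arbitrary such character need not be locally algebraic (hence need not correspond to a Hecke character at all). Establishing the algebraicity of $\chi$ is precisely the content of Section~3 and the point where the new input from \cite{BH24} enters. The paper shows that $\chi'=\epsilon^{1-n}\chi$ is a $1$-dimensional subrepresentation of $V\otimes V$ via the identification $V\otimes V\cong\End(V)\otimes\chi'$ and the trivial line $\overline{E}_{\lambda_0}\cdot\mathrm{id}_V\subset\End(V)$, hence a weak abelian direct summand of the $E$-rational semisimple representation $\rho_{\pi,\lambda_0}\otimes\rho_{\pi,\lambda_0}$, and then applies Theorem~\ref{thm_weak abelian direct summand-locally algebraic} to conclude local algebraicity. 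Without this, the passage from $\chi$ to a Hecke character $\eta$ is unjustified and the argument does not close.

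On the sign condition: the paper treats (a) and (b) uniformly, citing \cite[Lemma 2.1]{CG13} for the statement that $(\xi_\chi)_v(-1)$ is independent of $v\mid\infty$ when $K=\Q$ (trivially, a single infinite place) or $n$ is odd (a parity argument). Your route for (a), decomposing $\rho_{\pi,\lambda_0}$ into $2$-dimensional pieces and invoking Serre's conjecture, is circular as written: to apply Serre/Khare--Wintenberger you would need oddness of those $2$-dimensional representations, and that oddness is itself an output of the polarization you are trying to establish. Moreover, producing the $2$-dimensional projections $f_{i,\lambda_0}$ from $\bG_{\lambda_0}$ already requires the crystalline lifting machinery of Section~5 (which in turn uses the conclusions of Theorem~\ref{thm_main theorem}), so this route for Proposition~\ref{prop_essential self-duality} would run the logic backwards. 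The correct and much shorter path is: establish local algebraicity of $\chi$ via \cite{BH24}, pass to a Hecke character and to $\pi\cong\pi^\vee\otimes(\xi_\chi\circ\det)$ by strong multiplicity one, then invoke \cite[Lemma 2.1]{CG13} for the sign.
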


Indeed, we will prove that $\pi$ must be polarized under the assumptions in Theorem~\ref{thm_main thm wo polarizability}  (Proposition~\ref{prop_essential self-duality}). Then Theorem~\ref{thm_main thm wo polarizability} follows from Theorem~\ref{thm_main theorem}. 

When $K=\Q$, the compatible system $\{\rho_{\pi, \lambda}\}_\lambda$ can be constructed from two-dimensional modular compatible systems up to twist.

\begin{theorem}\label{thm_main theorem-classification}
    Suppose $\{\rho_{\pi, \lambda}: \mathrm{Gal}_{\mathbb Q} \to \mathrm{GL}_n(\overline E_{\lambda})\}_{\lambda}$ is the compatible system of $\Q$ (defined over $E$) attached to a regular algebraic cuspidal automorphic representation $\pi$ of $\mathrm{GL}_n(\mathbb A_{\mathbb Q})$ that satisfies the conditions Theorem~\ref{thm_main theorem}(a)--(c). After replacing the coefficient field $E$ with a larger field if necessary, 
		there exist strictly compatible systems of $2$-dimensional modular Galois representations 
		$\{f_{i, \lambda}: \mathrm{Gal}_{\mathbb Q} \to \mathrm{GL}_2(\overline{E}_{\lambda})\}_{\lambda}$ for $1\leq i\leq r$, 
	 a compatible system of $1$-dimensional representations $\{\chi_{\lambda}: \mathrm{Gal}_{\mathbb Q} \to \mathrm{GL}_1(\overline{E}_{\lambda})\}_{\lambda}$, and	$k_1,\ldots,k_r \in\N$ such that
    \begin{align*}
        \rho_{\pi, \lambda}=\mathrm{Sym}^{k_1}(f_{1, \lambda})\otimes\cdots \otimes \mathrm{Sym}^{k_r}(f_{r, \lambda}) \otimes \chi_{\lambda}
    \end{align*}
		 for all $\lambda$.
\end{theorem}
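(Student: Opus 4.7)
The strategy is to combine the monodromy and irreducibility conclusions of Theorem~\ref{thm_main thm wo polarizability} with Khare--Wintenberger's proof of Serre's modularity conjecture in order to descend the exterior tensor decomposition of the tautological representation of $\bG_\lambda^{\der}$ to an actual decomposition of compatible systems of $\Gal_\Q$. Invoking Theorem~\ref{thm_main thm wo polarizability}(a), $\pi$ is essentially self-dual, $\rho_{\pi,\lambda}$ is irreducible for every $\lambda$, and the conjugacy class of $\bG_{\lambda,\C}\subset\GL_{n,\C}$ is $\lambda$-independent. Hence for every $\lambda$ one has a fixed exterior tensor decomposition
\begin{equation*}
\bigl(\bG_\lambda^{\der},\ \rho_{\pi,\lambda}|_{\bG_\lambda^{\der}}\bigr)\;\cong\;\bigl(\SL_2^{r},\ \Sym^{k_1}\std\otimes\cdots\otimes\Sym^{k_r}\std\bigr),
\end{equation*}
with $r$ and exponents $k_1,\dots,k_r\ge 1$ independent of $\lambda$.

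Next, composing $\rho_{\pi,\lambda}$ with the quotient $\bG_\lambda\twoheadrightarrow\bG_\lambda^{\ad}\cong\PGL_2^{r}$ and projecting to the $i$-th factor yields, for each $1\le i\le r$, a projective $\ell$-adic representation $\overline{\sigma}_{i,\lambda}\colon\Gal_\Q\to\PGL_2(\overline E_\lambda)$. Since $K=\Q$, Tate's classical lifting theorem for projective Galois representations guarantees that each $\overline{\sigma}_{i,\lambda}$ lifts to a genuine $2$-dimensional representation $\sigma_{i,\lambda}\colon\Gal_\Q\to\GL_2(\overline E_\lambda)$, unique up to twisting by a character. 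The essential self-duality of $\pi$ together with the regularity of the Hodge--Tate weights of $\rho_{\pi,\lambda}$ then allows one to normalise the determinant so that $\sigma_{i,\lambda}$ is de Rham, Hodge--Tate regular and odd.

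I would then show, for each $i$, that $\{\sigma_{i,\lambda}\}_\lambda$ assembles into a strictly compatible system of $\Gal_\Q$. Under hypothesis (c)(1), the unique basic factor with $k_j=1$ (if any) is pinned down by the multiset of exponents, making the labelling across $\lambda$ canonical; strict compatibility of $\{\rho_{\pi,\lambda}\}_\lambda$, via the tensor decomposition, forces the characteristic polynomial of $\sigma_{i,\lambda}(\mathrm{Frob}_p)$ at unramified $p$ to come from a common $E$-rational family after enlarging $E$ to contain the auxiliary twisting characters. Under hypothesis (c)(2), one first passes to the quadratic extension of $\Q$ that separates the two $\SL_2$ factors of $\SO_4$, runs the previous argument there, and descends through the quadratic character cutting out the extension. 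Each $\sigma_{i,\lambda}$ being $2$-dimensional, odd, de Rham and Hodge--Tate regular, Khare--Wintenberger identifies it with the $\ell$-adic realisation of a classical modular eigenform $f_i$ of weight $\ge 2$. Reassembling the tensor decomposition gives
\begin{equation*}
\rho_{\pi,\lambda}=\Sym^{k_1}(f_{1,\lambda})\otimes\cdots\otimes\Sym^{k_r}(f_{r,\lambda})\otimes\chi_\lambda,
\end{equation*}
with $\chi_\lambda$ the $1$-dimensional compatible system absorbing the residual central twist.

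The main obstacle is the consistency of the lifts $\sigma_{i,\lambda}$ across $\lambda$, particularly in the $(\SO_4,\std)$ case of (c)(2). There the two $\SL_2$ factors of $\SO_4$ are interchanged by an outer automorphism, so before one can even speak of individual $2$-dimensional Galois representations one has to exhibit a Galois-equivariant labelling of the factors --- typically via the quadratic character cutting out the quadratic extension that trivialises the swap --- and then descend to $\Q$. Arranging this labelling, the determinant normalisations, and the oddness uniformly in $\lambda$ so as to produce genuine strictly compatible systems $\{\sigma_{i,\lambda}\}_\lambda$ is the delicate point; once that is done, modularity is supplied directly by Khare--Wintenberger.
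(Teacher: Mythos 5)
Your proposal differs materially from the paper's proof, and the step you flag as "the main obstacle" is a genuine gap that you do not resolve; the paper's argument is designed precisely to avoid that obstacle.

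The paper works at a \emph{single} well-chosen prime $\lambda_0$ (taken large enough that $\rho_{\pi,\lambda_0}$ is crystalline at $\ell_0$). Using the global crystalline lifting theorem of Patrikis, in the form recorded in \cite{DWW24} (Theorem~\ref{thm_crystalline lift}), it lifts $\rho_{\pi,\lambda_0}$ along the central-torus surjection $\GL_2^r\twoheadrightarrow\bG_{\lambda_0}$ of \eqref{pi12} to obtain \emph{global} two-dimensional representations $f_{1,\lambda_0},\ldots,f_{r,\lambda_0}$ and a character $\chi_{\lambda_0}$, all crystalline at $\ell_0$, with $\rho_{\pi,\lambda_0}=\Sym^{k_1}(f_{1,\lambda_0})\otimes\cdots\otimes\Sym^{k_r}(f_{r,\lambda_0})\otimes\chi_{\lambda_0}$. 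It then establishes oddness of each $f_{i,\lambda_0}$ by Calegari--Gee's criterion (Proposition~\ref{prop_CG13}), which requires showing $\Sym^2\overline f_{i,\lambda_0}^{\ss}|_{\Gal_{\Q(\zeta_{\ell_0})}}$ is irreducible via the big-image input from \cite{Hui23a}, and concludes modularity from Pan's proof of the Fontaine--Mazur conjecture (Theorem~\ref{thm_Fontaine-Mazur}). Once $f_{i,\lambda_0}$ is known to be modular, the strictly compatible system $\{f_{i,\lambda}\}_\lambda$ comes for free from the modular form, so no cross-$\lambda$ assembly is needed.

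Your route via Tate's classical projective lifting at \emph{every} $\lambda$, followed by an assembly into compatible systems, is where the trouble lies. First, Tate's theorem gives a lift unique only up to an arbitrary character twist, and you offer no mechanism to choose those twists so that the resulting $\sigma_{i,\lambda}$ satisfy $E$-rational compatibility of Frobenius characteristic polynomials across $\lambda$; this is a nontrivial problem that you name but do not solve, whereas the paper sidesteps it entirely. Second, your claim that essential self-duality and Hodge--Tate regularity let you "normalise the determinant so that $\sigma_{i,\lambda}$ is \ldots odd" is not justified: oddness of the two-dimensional pieces is the substantive input to modularity, and the paper obtains it via Proposition~\ref{prop_CG13} together with a residual irreducibility statement for $\Sym^2\overline f_{i,\lambda_0}^{\ss}$ at large $\ell_0$, neither of which appears in your sketch. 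Third, the worry about the outer swap of the two $\SL_2$ factors in the $(\SO_4,\std)$ case is a red herring once you work at a single $\lambda_0$: since $\bG_{\lambda_0}$ is connected by hypothesis (b), the Galois image lies in a fixed connected group and the simple factors are normal subgroups, already canonically labelled; there is no quadratic base change to perform. Finally, one should note that the paper's remark after the proof does mention that Khare--Wintenberger could be used in place of Pan's theorem, so your choice of modularity input is acceptable in principle, but the surrounding scaffolding in your proposal is what is missing.
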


The integer $r$ above is the semisimple rank of $\bG_\lambda$, which is independent of $\lambda$ (Theorem~\ref{thm_formal bi-char indep}).

\begin{remark}\label{SP}
(Suggested by Stefan Patrikis)
The analogue of Theorem \ref{thm_main theorem-classification} for totally real fields is false!
There exist mixed parity non-CM Hilbert modular forms $\pi_1$ and $\pi_2$ (over $K$) such that 
the automorphic tensor product
$\pi=\pi_1 \boxtimes \pi_2$ is regular algebraic, polarized, cuspidal
with the algebraic monodromy group of $\rho_{\pi,\lambda}$ 
equal to $\GO_4\subset\GL_4$ for all $\lambda$ 
(i.e., conditions (a)--(c) of Theorem~\ref{thm_main theorem} are satisfied) \cite[$\mathsection2.6.2$]{Pat19}.
According to \cite[Proposition 2.6.7(ii)]{Pat19}, $\rho_{\pi,\lambda}$ is isomorphic 
to the tensor product $\rho_{1,\lambda}\otimes\rho_{2,\lambda}$, where $\rho_{1,\lambda}$ and $\rho_{2,\lambda}$ are two-dimensional, continuous, almost everywhere unramified Galois representations of $K$, no twist of either $\rho_{1,\lambda}$ or $\rho_{2,\lambda}$ 
is Hodge--Tate, and $\rho_{\pi,\lambda}$ is not a tensor product of geometric Galois representations.
Therefore, the four-dimensional compatible system $\{\rho_{\pi, \lambda}\}_\lambda$ cannot be the tensor product of two Hilbert modular compatible systems
up to a twist.
\end{remark}

We describe the remaining sections of this article.
Section~\ref{sec_prelim} introduces the fundamental objects studied or used in this article, including general (weakly/strictly) compatible systems, properties and conjectures of automorphic compatible systems, potential automorphy results, algebraic monodromy groups of a compatible system and their $\lambda$-independence. 
In Section~\ref{sec_ess self duality}, we prove that Theorem \ref{thm_main thm wo polarizability} follows from Theorem \ref{thm_main theorem} by exploiting the type $A_1$ condition and a result from \cite{BH24} on local algebraicity.
Theorems \ref{thm_main thm wo polarizability} and \ref{thm_main theorem-classification} are proven in Sections $4$ and $5$, respectively,
by combining various results of the representation theory of reductive groups
and Galois representations focusing on, for instance, $\lambda$-independence, big images, irreducibility, (potential) automorphy, liftings.

\section{Preliminaries}\label{sec_prelim}

\subsection{Compatible systems}

Let $K$ and $E$ be number fields. We denote by $\mathrm{Gal}_{K}$ the absolute Galois group of $K$, and also denote by $\Sigma_K$ and $\Sigma_E$ the set of finite places of $K$ and $E$, respectively. We use the notation $p$ and $\ell$ for rational primes, and we let $v \in \Sigma_K$ be a prime above $p$ and $\lambda \in \Sigma_E$ be a prime above $\ell$.

An $\ell$-adic Galois representation 
$\rho: \mathrm{Gal}_K \to \mathrm{GL}_n(\overline{E}_{\lambda})\cong\GL_n(\overline\Q_\ell)$
of $K$ is a continuous group homomorphism. It is said to be $E$-rational if there exist a finite set of places $S \subset \Sigma_K$ for which the following two conditions are satisfied.
\begin{enumerate}[\normalfont(a)]
    \item If $v \not\in S$, then $\rho$ is unramified at $v$.
    \item The characteristic polynomial $\Phi_{\rho, v}(T)$ of $\rho$ at $v$ is defined over $E$, i.e.,
    \begin{align}\label{Frobpoly}
        \Phi_{\rho, v}(T):=\det(\rho(\mathrm{Frob}_v)-T\cdot \mathrm{id}) \in E[T].
    \end{align}
\end{enumerate}

For a family of $n$-dimensional $\ell$-adic Galois representations 
$$\{\rho_{\lambda} : \mathrm{Gal}_{K} \to \mathrm{GL}_n(\overline{E}_{\lambda})\}_{\lambda \in \Sigma_E}$$
indexed by $\Sigma_E$,
we define a {\it compatible system} as follows.

\begin{definition}\label{csdef}
    The family $\{\rho_{\lambda} : \mathrm{Gal}_{K} \to \mathrm{GL}_n(\overline{E}_{\lambda})\}_{\lambda}$ is called a compatible system of $\ell$-adic Galois representations of $K$ defined over $E$ if there exist a finite set of places $S \subset \Sigma_K$ and  a polynomial $\Phi_v(T)\in E[T]$ for each $v\in\Sigma_K\backslash S$ such that
		the following conditions hold.
    \begin{enumerate}[\normalfont(a)]
        \item For all primes $v \not\in S \cup \{\lambda \mid \ell\}$, the representation $\rho_{\lambda}$ is unramified at $v$.
        \item For all primes $v \not\in S \cup \{\lambda \mid \ell\}$, the characteristic polynomial $\Phi_{\rho_\lambda,v}(T)$ in \eqref{Frobpoly} is equal to $\Phi_v(T)$.
    \end{enumerate}
\end{definition}

In particular, $\rho_\lambda$ is $E$-rational for all $\lambda\in\Sigma_E$.
We note that several fundamental operations such as direct sum, tensor product, dual, restriction to subgroup or induction from subgroup, coefficient extension, and restriction of scalars can be applied to compatible systems. These operations are described precisely in \cite[$\mathsection$2.4.4]{Hui23a}. For our purpose, we particularly give the definitions of coefficient extension and restriction of scalars (in case $\rho_\lambda(\Gal_K)\subset\GL_n(E_\lambda)$ for all $\lambda$) as follows.

\begin{definition}\label{def_coe ext and res  scalar}
    Let $\rho_\bullet:=\{\rho_{\lambda} : \mathrm{Gal}_{K} \to \mathrm{GL}_n(E_{\lambda})\}_{\lambda}$ be a compatible system of $K$ defined over $E$ and $E''/E/E'$ be number fields.
    \begin{enumerate}[\normalfont(i)]
        \item (Coefficient extension) $\rho_\bullet \otimes_E E'':=\{\rho_{\lambda} \otimes_{E_{\lambda}} E''_{\lambda''} :~ \lambda'' \mid \lambda\}_{\lambda'' \in \Sigma_{E''}}$. 
        \item (Restriction of scalars)  
        $$\mathrm{Res}_{E/E'}(\rho_\bullet):=\left\{\bigoplus_{\lambda \mid \lambda'} \rho_{\lambda}: \mathrm{Gal}_K \to \prod_{\lambda \mid \lambda'} \mathrm{GL}_n(E_{\lambda}) \subset \mathrm{GL}_{n[E:E']}(E'_{\lambda'})\right\}_{\lambda' \in \Sigma_{E'}}.$$
    \end{enumerate}
\end{definition}

\

We also introduce the concept of a weakly/strictly compatible system of $\ell$-adic Galois representations of $K$ over $E$.

\begin{definition}\label{def_weakly compatible system}
    A family $\{\rho_{\lambda} : \mathrm{Gal}_{K} \to \mathrm{GL}_n(\overline{E}_{\lambda})\}_{\lambda}$ is called a weakly compatible system of $\ell$-adic Galois representations of $K$ over $E$ if it is compatible and the following conditions are satisfied.
    \begin{enumerate}[\normalfont(a)]
        \item If $v$ is above $\ell$, then $\rho_{\lambda}|_{\mathrm{Gal}_{K_v}}$ is de Rham and is crystalline further if $v \not\in S$.
        \item For each embedding $\tau: K\hookrightarrow \overline E$, the 
				$\tau$-Hodge-Tate numbers  $\mathrm{HT}_{\tau}(\rho_{\lambda})=\mathrm{HT}_{\tau}(\rho_{\lambda}|_{\mathrm{Gal}_{K_v}})$ of $\rho_{\lambda}$ is independent of $\lambda$ and any $\overline E\hookrightarrow \overline E_\lambda$ over $E$.
    \end{enumerate}
    A weakly compatible system is called a strictly compatible system if an extra condition holds.
    \begin{enumerate}[\normalfont(a)]
        \setcounter{enumi}{2}
        \item If $v$ is not above $\ell$, then the semisimplified Weil-Deligne representation $\iota \mathrm{WD}(\rho_{\lambda}|_{\mathrm{Gal}_{K_v}})^{F-ss}$ is independent of $\lambda$
				and $\iota:\overline{E}_\lambda\stackrel{\cong}{\rightarrow} \C$.
    \end{enumerate}
\end{definition}

For the definition and basic properties of $\tau$-Hodge-Tate numbers, 
we refer the reader to \cite[$\mathsection$1]{PT15}. When $K=\mathbb Q$, 
these are the same as the usual Hodge-Tate numbers.
A (weakly/strictly) compatible system $\{\rho_\lambda\}_\lambda$
is said to be semisimple if $\rho_\lambda$ is semisimple for all $\lambda$.

\begin{definition}\label{regularalg}
    An $\ell$-adic representation $\rho_{\lambda} : \mathrm{Gal}_{K} \to \mathrm{GL}_n(\overline{E}_{\lambda})$ is said to be regular (or regular algebraic) if it is unramified at all but finitely many primes, and for any $v$ above $\ell$, $\rho_{\lambda}|_{\mathrm{Gal}_{K_v}}$ is de Rham and $\mathrm{HT}_{\tau}(\rho_{\lambda}|_{\mathrm{Gal}_{K_v}})$ consists of $n$ distinct numbers for any $\tau: K \hookrightarrow \overline E_{\lambda}$.
\end{definition}

\subsection{Automorphic Galois representations}\label{sec_automorphic Gal repn}

Let $K$ be a totally real or CM field and let $K^+$ be the maximal totally real subfield of $K$.
We refer the reader to \cite[$\mathsection$2.1]{BLGGT14} and \cite[$\mathsection$2]{CG13} for the definition of a \emph{regular algebraic cuspidal automorphic representation} $\pi$ of $\mathrm{GL}_n(\mathbb A_{K})$. 
We say that $\pi$ is \emph{polarized} if there is 
a continuous character $\xi: \mathbb A_{K^+}^{\times}/(K^+)^{\times} \to \mathbb C^{\times}$ 
such that 
\begin{itemize}
    \item $\xi_v(-1)$ is independent of $v \mid \infty$, and 
    \item $\pi^c \simeq \pi^{\vee} \otimes (\xi \circ \mathbf N_{K/K^+}\circ \det)$,
\end{itemize}
where $\pi^c$ denotes the composition of $\pi$ with complex conjugation on $\mathrm{GL}_n(\mathbb A_K)$. In the case that $K$ is imaginary, we further suppose that $\xi_v(-1)=(-1)^n$ for all $v \mid \infty$.

For a regular algebraic cuspidal automorphic representation $\pi$ of $\mathrm{GL}_n(\mathbb A_K)$, we can attach a semisimple compatible system 
$\{\rho_{\pi,\lambda}: \mathrm{Gal}_{K} \to \mathrm{GL}_n(\overline{E}_{\lambda})\cong\GL_n(\overline\Q_\ell)\}_{\lambda}$ due to many celebrated works. We refer to the paragraphs before and after \cite[Theorem 2.1.1]{BLGGT14} for the polarized case and \cite{HLTT16},\cite{Sch15} in general.
These $\ell$-adic representations $\rho_{\pi,\lambda}$ are conjecturally irreducible.

Suppose that $\pi$ is polarized. Then the system $\{\rho_{\pi,\lambda}\}_\lambda$ is strictly compatible and 
we may assume that it is given by
$$
\{\rho_{\pi, \lambda} : \mathrm{Gal}_{K} \to \mathrm{GL}_n(E_{\lambda})\}_{\lambda}
$$
after replacing $E$ with a sufficiently large CM field
(see \cite[Lemmas 1.2 and 1.4]{PT15} and \cite[Lemma 5.3.1(3)]{BLGGT14}). 
Moreover, the $\tau$-Hodge-Tate numbers of $\rho_{\lambda}$ at $v$ above $\ell$ are given by
\begin{align*}
    \mathrm{HT}_{\tau}(\rho_{\pi, \lambda})=\{a_{\tau,1}+(n-1), a_{\tau,2}+(n-2), \ldots, a_{\tau,n}\},
\end{align*}
where $a=(a_{\tau,i}) \in (\mathbb Z^n)^{\mathrm{Hom}(K,\mathbb C),+}$ denotes the weight of $\pi$. In particular, 
the system $\{\rho_{\pi, \lambda}\}_{\lambda}$ is regular. 
If $\pi$ is not polarized, it is not even known whether the local representations of
$\rho_{\pi,\lambda}$ at $v$ above $\ell$ are Hodge-Tate.

Conversely, it is asserted by the Fontaine-Mazur-Langlands conjecture that any irreducible $\ell$-adic Galois representation of $\mathbb Q$ which is unramified outside  a finite set $S$ of places and is de Rham locally at $\ell$ comes from a cuspidal automorphic representation of $\mathrm{GL}_n(\mathbb A_{\mathbb Q})$ \cite{FM95, Lan79} (see also \cite{Clo90, Tay04}). This property, where Galois representations correspond to automorphic representations, is called the {\it automorphy}. The following is one of the theorems on potential automorphy of regular algebraic Galois representations over a totally real field with some additional conditions.

\begin{theorem}{\cite[Theorem C]{BLGGT14}}\label{thm_BLGGT14 Thm C}
    Suppose that $K$ is a totally real field. Let $n$ be a positive integer and $\ell \geq 2(n+1)$ be a rational prime. Let
    \begin{align*}
        \rho_{\ell}:\mathrm{Gal}_K \to \mathrm{GL}_n(\overline{\mathbb Q}_{\ell})
    \end{align*}
    be a continuous representation. We denote by $\overline{\rho}^{\ss}_\ell$ the semisimplification of the reduction of $\rho_\ell$. Suppose the following conditions are satisfied.
    \begin{enumerate}[\normalfont(1)]
        \item (Unramified almost everywhere) $\rho_{\ell}$ is unramified at all but finitely many primes.
        \item (Odd essential self-duality) Either $\rho_{\ell}$ maps to $\mathrm{GSp}_n$ with totally odd multiplier or it maps to $\mathrm{GO}_n$ with totally even multiplier.
        \item (Potential diagonalizability and regularity) $\rho_{\ell}$ is potentially diagonalizable (and hence potentially crystalline) at each prime $v$ of $K$ above $\ell$ and for each embedding $\tau : K \to \overline{\mathbb Q}_{\ell}$ it has $n$ distinct  $\tau$-Hodge-Tate numbers.
        \item (Irreducibility) $\overline{\rho}_{\ell}^{\mathrm{ss}}|_{\mathrm{Gal}_{K(\zeta_{\ell})}}$ is irreducible, where $\zeta_{\ell}:=e^{2\pi i/\ell}$ the primitive $\ell$th root of unity.
    \end{enumerate}
    Then we can find a finite Galois totally real extension $K'/K$ such that $\rho_{\ell}|_{\mathrm{Gal}_{K'}}$ is attached to a regular algebraic polarized cuspidal automorphic representation of $\mathrm{GL}_n(\mathbb A_{K'})$. Moreover, $\rho_{\ell}$ is part of a strictly compatible system of $K$.
\end{theorem}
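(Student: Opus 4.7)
The plan is to follow the Khare--Wintenberger style potential automorphy strategy: establish residual automorphy after a controlled totally real base change, then promote it to automorphy in characteristic $0$ via an $R=T$ lifting theorem. First I would reduce to a clean setup by replacing $K$ with a finite totally real extension $K^{(0)}/K$ over which (i) $\overline{\rho}_\ell^{\ss}|_{\Gal_{K^{(0)}(\zeta_\ell)}}$ remains irreducible, (ii) $\rho_\ell|_{\Gal_{K^{(0)}}}$ becomes genuinely crystalline (rather than only potentially so) at every place above $\ell$, and (iii) the polarization pairing acquires a standard $\GSp_n$- or $\GO_n$-form with the prescribed parity of multiplier. Potential diagonalizability, the residual irreducibility hypothesis, and odd essential self-duality are all stable under totally real solvable base change, which keeps this reduction legitimate.

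Next I would construct an auxiliary representation
\begin{equation*}
\rho'_\ell \colon \Gal_{K^{(0)}} \to \GL_n(\overline{\Q}_\ell)
\end{equation*}
belonging to a strictly compatible system already known to be automorphic, taking it from the cohomology of a Dwork-type Harris--Shepherd-Barron--Taylor family of Calabi--Yau hypersurfaces --- a construction tailored to produce polarized regular algebraic automorphic representations with any prescribed regular Hodge--Tate weights and a potentially diagonalizable local structure at $\ell$. The crucial requirement is that after a further finite totally real extension $K'/K^{(0)}$ one has $\overline{\rho}_\ell|_{\Gal_{K'}} \cong \overline{\rho'_\ell}|_{\Gal_{K'}}$ with matching Hodge--Tate weights and compatible polarizations. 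The existence of $K'$ is supplied by a Moret--Bailly argument applied to an auxiliary moduli space parametrizing the required residual isomorphism together with local deformation data at $\ell$; arranging $K'$ to be simultaneously totally real and linearly disjoint from the splitting field of $\overline{\rho}_\ell$ is the main technical balancing act and, in my view, the central obstacle of the whole proof.

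With the residual matching in hand, I would invoke the BLGGT automorphy lifting theorem --- an $R=T$ statement for potentially diagonalizable deformations of an odd essentially self-dual residually irreducible representation --- to transfer automorphy from $\rho'_\ell|_{\Gal_{K'}}$ to $\rho_\ell|_{\Gal_{K'}}$. This yields a regular algebraic polarized cuspidal automorphic representation of $\GL_n(\A_{K'})$ whose Galois realization at $\ell$ recovers $\rho_\ell|_{\Gal_{K'}}$, and hence a strictly compatible system $\{\rho_\mu\}_\mu$ over $K'$. To descend this to a strictly compatible system over $K$ containing the original $\rho_\ell$, I would note that the Frobenius characteristic polynomials of $\rho_\ell$ at primes of $K$ unramified in $K'$ are determined by those of $\rho_\ell|_{\Gal_{K'}}$, and use Chebotarev density together with cyclic solvable base change and Brauer induction to propagate local-global compatibility from $K'$ down to $K$ at each auxiliary prime. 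The resulting $E$-rational family of characteristic polynomials at Frobenius, combined with the already-established local deformation conditions, then upgrades to a genuine strictly compatible system of $K$.
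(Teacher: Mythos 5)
The statement you are asked to prove is cited in the paper verbatim as \cite[Theorem C]{BLGGT14}; the paper supplies no proof of its own, so there is nothing internal to compare your argument against. Your sketch does track the broad strategy by which Barnet-Lamb--Gee--Geraghty--Taylor actually establish the result: solvable totally real base change to a favorable situation, construction of an auxiliary automorphic compatible system from the Dwork--Harris--Shepherd-Barron--Taylor family with prescribed regular Hodge--Tate weights, a Moret--Bailly argument to match residual representations after a further totally real extension, an automorphy lifting theorem for potentially diagonalizable deformations to promote residual automorphy to automorphy of $\rho_\ell|_{\Gal_{K'}}$, and finally Brauer induction and cyclic base change to package the result as a strictly compatible system over $K$. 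Two caveats are worth flagging. First, your step (ii) — reducing to the case where $\rho_\ell$ is genuinely crystalline at all $v\mid\ell$ after base change — is not how BLGGT proceed: potential diagonalizability is precisely the local hypothesis their lifting theorems are designed to handle directly, and reducing to the crystalline case is neither necessary nor, in general, achievable by a controlled base change without destroying other constraints. Second, the last paragraph compresses what is in fact a delicate point: producing a strictly compatible system over $K$ (not merely over $K'$) requires the full Brauer-induction machinery of \cite[\S 5]{BLGGT14}, including verification of purity and local--global compatibility at all places, and is not a routine Chebotarev argument. As an outline your proposal is compatible with the known proof, but since the paper treats the theorem as an external input, no comparison at the level of detail is possible.
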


The following is a useful proposition for our purpose which is proven in \cite{Hui23b} by combining Theorem~\ref{thm_BLGGT14 Thm C} and the results in \cite{Hui23a}.

\begin{proposition}{\cite[Proposition 2.12(b)]{Hui23b}}\label{prop_SO3 potential automorphy}
    Let $\{\rho_{\lambda}:\mathrm{Gal}_{K} \to \mathrm{GL}_n(E_{\lambda})\}_{\lambda}$ be a strictly compatible system of a totally real field $K$ defined over $E$. For a sufficiently large $\ell$, if $\sigma_{\lambda}: \mathrm{Gal}_{K} \to \mathrm{GL}(W_{\lambda})$ is a regular $3$-dimensional $\ell$-adic subrepresentation of $\rho_{\lambda} \otimes \overline{E}_{\lambda}$ and the derived subgroup $G_{\sigma_{\lambda}}^{\mathrm{der}}$ of its algebraic monodromy group  is $\mathrm{SO}_3$ (as a group embedded in $\mathrm{GL_3})$, then there is a finite Galois totally real extension $K'/K$ such that $\sigma_{\lambda}|_{\mathrm{Gal}_{K'}}$ is attached to a regular algebraic polarized cuspidal automorphic representation of $\mathrm{GL}_3(\mathbb A_{K'})$. Moreover, for such $\lambda$, $\sigma_{\lambda}$ is a part of a strictly compatible system of Galois representations of $K$.
\end{proposition}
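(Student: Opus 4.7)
The plan is to verify, for $\ell$ sufficiently large, that the $3$-dimensional subrepresentation $\sigma_\lambda$ satisfies the four hypotheses of Theorem~\ref{thm_BLGGT14 Thm C}, and then to read off both claims of the proposition (potential automorphy over some totally real $K'/K$, and membership in a strictly compatible system of $K$) directly from the conclusion of that theorem.

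Hypothesis~(1), unramified almost everywhere, is inherited from $\rho_\lambda$, since $\sigma_\lambda$ is a subrepresentation of $\rho_\lambda\otimes\overline{E}_\lambda$ and the compatible system $\{\rho_\lambda\}_\lambda$ is unramified outside a fixed finite set of places. For hypothesis~(2), the assumption $\bG_{\sigma_\lambda}^{\der}=\SO_3$ embedded in $\GL_3$ in the standard way forces the image of $\sigma_\lambda$ to lie in the normalizer $\GO_3$, so $\sigma_\lambda\colon\Gal_K\to\GO_3(\overline{E}_\lambda)$ with some similitude character $\mu$. From $\det(\sigma_\lambda)^2=\mu^3$ together with the symmetry $a_\tau+c_\tau=2b_\tau$ of the distinct $\tau$-Hodge-Tate weights $a_\tau>b_\tau>c_\tau$ forced by the $\SO_3$-invariant symmetric pairing, one obtains $\mathrm{HT}_\tau(\mu)=2b_\tau$ for every $\tau$; every weight of $\mu$ is even, so $\mu(c_v)=1$ at every complex conjugation $c_v$ and $\mu$ is totally even. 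For hypothesis~(3), strict compatibility of $\{\rho_\lambda\}_\lambda$ bounds the $\tau$-Hodge-Tate weights of $\rho_\lambda$, and hence of the subrepresentation $\sigma_\lambda$, uniformly in $\lambda$; so for $\ell$ larger than this bound the weights at each $v\mid\ell$ fall in the Fontaine-Laffaille range, making $\sigma_\lambda|_{\Gal_{K_v}}$ crystalline and potentially diagonalizable, and regularity of $\sigma_\lambda$ is built into the hypothesis.

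The main obstacle is hypothesis~(4), the residual irreducibility of $\overline{\sigma}_\ell^{\ss}|_{\Gal_{K(\zeta_\ell)}}$. For this one invokes the big-image technology for compatible systems developed in \cite{Hui23a}: for almost every $\ell$, the image of $\overline{\sigma}_\ell^{\ss}$ in $\GL_3(\overline{\F}_\ell)$ contains, up to conjugation, a subgroup of $\SO_3(\F_\ell)$ of index bounded independently of $\ell$. Since $\SO_3(\F_\ell)$ acts absolutely irreducibly on $\overline{\F}_\ell^3$ and, for $\ell$ large, has no proper subgroup of index dividing $[K(\zeta_\ell):K]$ (the relevant cyclotomic index being controlled uniformly by the compatible-system structure), the restriction $\overline{\sigma}_\ell^{\ss}|_{\Gal_{K(\zeta_\ell)}}$ remains absolutely irreducible.

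With all four hypotheses established for $\ell$ large, Theorem~\ref{thm_BLGGT14 Thm C} produces a finite Galois totally real extension $K'/K$ such that $\sigma_\lambda|_{\Gal_{K'}}$ is attached to a regular algebraic polarized cuspidal automorphic representation of $\GL_3(\A_{K'})$, and simultaneously realizes $\sigma_\lambda$ as a member of a strictly compatible system of $\ell$-adic Galois representations of $K$, which is precisely the assertion. The hardest and most delicate point is hypothesis~(4): extracting the $\SO_3(\F_\ell)$ big-image statement from \cite{Hui23a} and preserving it after restricting to $\Gal_{K(\zeta_\ell)}$.
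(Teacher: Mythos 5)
This proposition is not proved in the present paper; it is imported verbatim from \cite[Proposition 2.12(b)]{Hui23b}, with only a one-line indication of strategy (``proven in \cite{Hui23b} by combining Theorem~\ref{thm_BLGGT14 Thm C} and the results in \cite{Hui23a}''). Your proposal correctly reconstructs that strategy: verify the four hypotheses of Theorem~\ref{thm_BLGGT14 Thm C} for $\sigma_\lambda$, with the big-image results of \cite{Hui23a} carrying hypothesis (4). Hypotheses (1) and (3) are handled correctly (for (3), $\sigma_\lambda|_{\Gal_{K_v}}$ is crystalline because $\rho_\lambda|_{\Gal_{K_v}}$ is crystalline for $v\mid\ell$ once $\ell$ is away from the bad set, and the uniformly bounded Hodge--Tate weights together with $\ell$ unramified in $K$ put one in Fontaine--Laffaille range, giving potential diagonalizability).

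However, your verification of hypothesis (2), the \emph{totally even} multiplier, has a genuine gap. You derive that $\sigma_\lambda$ lands in $\GO_3$ with similitude character $\mu$ whose $\tau$-Hodge--Tate weights are all even, and then conclude ``every weight of $\mu$ is even, so $\mu(c_v)=1$.'' This implication is false: the parity of the Hodge--Tate weight of a locally algebraic character does not determine its value at a complex conjugation. One can twist any character by a totally odd finite-order character without changing Hodge--Tate weights, so $\mu(c_v)=\pm1$ is not controlled by the parity of $\mathrm{HT}_\tau(\mu)$. This is exactly the ``sign problem'' for polarizations, and it is one of the delicate points in the actual argument. The fix in this $\SO_3$ setting goes through the exceptional isomorphism $\SO_3\cong\PGL_2$: one lifts $\sigma_\lambda$ (after adjusting by a character) to a two-dimensional representation $\tau_\lambda$ with $\mathrm{Sym}^2\tau_\lambda\otimes\chi\cong\sigma_\lambda$, and then invokes an oddness criterion for two-dimensional representations over totally real fields --- precisely the kind of statement as Proposition~\ref{prop_CG13} in this paper, which under residual big image, regularity, crystallinity and boundedness of Hodge--Tate weights yields that $(\tau_\lambda,\det\tau_\lambda)$ is odd. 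Oddness of $\tau_\lambda$ then forces the symmetric-square polarization of $\sigma_\lambda$ to be totally even. Without this step, hypothesis (2) of Theorem~\ref{thm_BLGGT14 Thm C} is not established.

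On hypothesis (4), your stated reason --- that $\SO_3(\F_\ell)$ ``has no proper subgroup of index dividing $[K(\zeta_\ell):K]$'' --- is literally false: $\SO_3(\F_\ell)\cong\PGL_2(\F_\ell)$ has the index-$2$ subgroup $\PSL_2(\F_\ell)$, and $2$ divides $\ell-1$. The conclusion is nevertheless salvageable: the image of $\Gal_{K(\zeta_\ell)}$ is a normal subgroup of the image of $\Gal_K$ with cyclic quotient, hence contains the simple group $\PSL_2(\F_\ell)$ (for $\ell\ge 5$), and $\PSL_2(\F_\ell)$ already acts absolutely irreducibly in the three-dimensional representation. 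You should replace the index argument by this simplicity argument. With these two repairs, the outline matches the argument the paper is alluding to.
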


On the other hand, Patrikis--Taylor \cite{PT15} proved an irreducibility result for compatible systems attached to cuspidal automorphic representations as follows.

\begin{theorem}{\cite[Theorem 1.7]{PT15}}\label{thm_PT15}
    Suppose that $K$ is a totally real or CM field and $\pi$ is a regular algebraic polarized cuspidal automorphic representation of $\mathrm{GL}_n(\mathbb A_K)$. Let $\{\rho_{\pi,\lambda}:\mathrm{Gal}_K \to \mathrm{GL}_n(\overline E_{\lambda})\}_{\lambda}$ be the compatible system attached to $\pi$. Then there is a positive Dirichlet density set $\mathcal L$ of rational primes such that if a prime $\lambda \in \Sigma_{E}$ divides some $\ell \in \mathcal L$, then $\rho_{\pi,\lambda}$ is irreducible.
\end{theorem}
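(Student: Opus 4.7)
The plan is to argue by contradiction, leveraging strict compatibility of $\{\rho_{\pi,\lambda}\}_\lambda$ together with the potential automorphy theorem (Theorem~\ref{thm_BLGGT14 Thm C}) to rule out reducibility on a density-one set of primes. Suppose to the contrary that the set $\mathcal L$ of rational primes $\ell$ such that $\rho_{\pi,\lambda}$ is irreducible for some $\lambda\mid\ell$ has Dirichlet density zero; then for $\lambda$ lying over primes in a density-one set, $\rho_{\pi,\lambda}$ is reducible, and I pick a simple subrepresentation $\sigma_\lambda\subsetneq\rho_{\pi,\lambda}$ of dimension $m(\lambda)<n$. Strict compatibility pins down each Frobenius polynomial $\Phi_{\rho_{\pi,\lambda},v}(T)\in E[T]$ uniformly in $\lambda$ at each unramified $v$, and purity forces its roots to have prescribed absolute values, so the $E$-rational monic divisors of degree less than $n$ form a finite set. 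A pigeonhole across the density-one set yields a positive-density subset $\mathcal M$ on which $\sigma_\lambda$ has a fixed dimension $m$ and a fixed Frobenius polynomial $\Psi_v(T)$ for every unramified $v$.

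Next I verify the hypotheses of Theorem~\ref{thm_BLGGT14 Thm C} for $\sigma_\lambda$, or for the self-dual doubling $\sigma_\lambda\oplus(\sigma_\lambda^\vee\otimes\chi_\lambda^{-1})$ when $\sigma_\lambda$ is not itself essentially self-dual. Unramifiedness outside a finite set is inherited from $\rho_{\pi,\lambda}$. Regularity follows because the $\tau$-Hodge-Tate numbers of $\sigma_\lambda$ form an $m$-element subset of the $n$ distinct $\tau$-Hodge-Tate numbers of $\rho_{\pi,\lambda}$. Potential diagonalizability at places above $\ell$ follows from Fontaine-Laffaille theory once $\ell$ exceeds the spread of the Hodge-Tate weights, which excludes only finitely many $\lambda$. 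Residual irreducibility of $\overline\sigma_\lambda|_{\Gal_{K(\zeta_\ell)}}$ is secured for cofinitely many $\lambda$ by a big-image argument on $\rho_{\pi,\lambda}$ modulo $\ell$. The essential self-duality with correct multiplier parity is the delicate point: from $\rho_{\pi,\lambda}^\vee\cong\rho_{\pi,\lambda}\otimes\chi_\lambda^{-1}$ with $\chi_\lambda$ totally odd or even, one obtains a pairing of $\sigma_\lambda$ with some irreducible constituent $\sigma_\lambda'$ of $\rho_{\pi,\lambda}$; if $\sigma_\lambda'\cong\sigma_\lambda$ then $\sigma_\lambda$ itself is essentially self-dual with the inherited parity, and otherwise the doubling trick restores self-duality at the cost of doubling the dimension.

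Applying Theorem~\ref{thm_BLGGT14 Thm C} extends $\sigma_\lambda$ to a strictly compatible system $\{\sigma_\mu\}_\mu$ of $K$. Since the divisibility $\Psi_v(T)\mid\Phi_v(T)$ of Frobenius polynomials holds on the positive-density set $\mathcal M$, strict compatibility of both systems forces it at every unramified $v$ for every $\mu$. Hence $\sigma_\mu^{\ss}$ embeds in $\rho_{\pi,\mu}^{\ss}$ for every $\mu$, and iterating the construction decomposes $\{\rho_{\pi,\mu}^{\ss}\}_\mu$ as a sum of strictly compatible subsystems of dimensions $m_1,\ldots,m_k$ with $\sum m_i=n$ and $k\geq 2$. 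The associated Satake parameters then present $\pi$ as an isobaric sum of lower-rank automorphic representations (by potential descent combined with strong multiplicity one of Jacquet-Shalika), contradicting the cuspidality of $\pi$.

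The hardest step will be verifying the essential self-duality of $\sigma_\lambda$ with the correct multiplier parity, since the polarization of $\pi$ need not restrict compatibly to an arbitrary irreducible constituent of $\rho_{\pi,\lambda}$. This requires a careful case analysis of the induced $\GSp$- versus $\GO$-structure and the use of the doubling trick when the constituent is not self-dual, as carried out in detail in \cite{PT15}.
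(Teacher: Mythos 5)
The paper does not prove this statement; it is quoted directly from Patrikis--Taylor \cite[Theorem 1.7]{PT15}, so there is no internal proof to compare against. Your overall plan --- extend an irreducible constituent $\sigma_\lambda$ of $\rho_{\pi,\lambda}$ to a strictly compatible system via potential automorphy (Theorem~\ref{thm_BLGGT14 Thm C}) and derive a contradiction with cuspidality of $\pi$ via strong multiplicity one --- does reflect the core strategy of \cite{PT15}, but two of your steps contain genuine gaps.

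The pigeonhole step is not valid. You want a positive-density subset $\mathcal M$ of $\lambda$ on which $\sigma_\lambda$ has a \emph{fixed} Frobenius polynomial $\Psi_v(T)\mid\Phi_v(T)$ simultaneously at \emph{every} unramified $v$. For any single $v$ the set of $E$-rational monic divisors of $\Phi_v(T)$ of degree $m$ is indeed finite, but the family of choice functions $v\mapsto\Psi_v(T)$ across the infinitely many places is uncountable; pigeonholing controls only finitely many $v$ at a time and cannot produce a single $\mathcal M$ working for all $v$. The $\lambda$-uniform ``shape'' of the decomposition has to come instead from genuine $\lambda$-independence of the formal (bi-)character of the algebraic monodromy group (Theorems~\ref{thm_Serre}, \ref{thm_formal bi-char indep}), together with a Chebotarev/Brauer--Nesbitt argument showing that once $\sigma_{\lambda_0}$ is promoted to a compatible system $\{\sigma_\mu\}_\mu$ via potential automorphy, the containment $\sigma_\mu\subseteq\rho_{\pi,\mu}^{\ss}$ propagates to every $\mu$; no pigeonhole over $\lambda$ enters at all. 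Second, the claim that when $\sigma_\lambda'\cong\sigma_\lambda$ the essential self-duality is ``inherited with the correct parity'' is not automatic: a self-dual irreducible constituent of a $\mathrm{GSp}$-valued (or $\mathrm{GO}$-valued) representation can pair with either sign, and Theorem~\ref{thm_BLGGT14 Thm C}(2) is sensitive to exactly this sign versus the parity of the multiplier. This sign-matching is the genuinely hard part of \cite{PT15}, and deferring to it without argument leaves the key hypothesis of potential automorphy unverified for $\sigma_\lambda$.
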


\subsection{Formal bi-characters}

Consider an algebraically closed field $F$ of characteristic zero and a reductive subgroup $G$ of the linear algebraic group $\mathrm{GL}_{n, F}$. Let $G^{\circ}$ be the identity component of $G$ and 
$G^{\der}:=[G^\circ, G^\circ]$. 
The formal character of $G$ is defined as the conjugacy class of subtori $[T_1]$ of $\mathrm{GL}_{n, F}$ such that some maximal torus of $G$ is in this conjugacy class.
Let $T$ be a maximal torus of $G$ and denote by $\mathbb{X}(T):=\mathrm{Hom}(T,\mathbb{G}_m)$ 
the character group of $T$. The formal character of $G$ corresponds 
to a multiset of elements of $\mathbb{X}(T)$.
The intersection $T':=G^{\der}\cap T$ is a maximal torus of the semisimple group $G^{\der}$.
The formal bi-character of $G$ is defined as the conjugacy class of 
a chain of subtori $[T_2\subset T_1]$ of $\mathrm{GL}_{n, F}$ such that $T'\subset T$ 
is in this conjugacy class. Both definitions are independent of the choice of the maximal torus of $G$.

Now, suppose that $G_1$ and $G_2$ are reductive subgroups of respectively $\mathrm{GL}_{n,F_1}$ and $\mathrm{GL}_{n,F_2}$, where $F_1$ and $F_2$ are fields of characteristic zero.
Let $F$ be an algebraically closed field containing $F_1$ and $F_2$.
We say that the formal characters (resp. bi-characters) of $G_1$ and $G_2$ are the same
if the formal characters (resp. bi-characters) of the base change $G_{1,F}$ and $G_{2,F}$ in $\GL_{n,F}$ coincide.
This definition is independent of the choice of $F$.


\subsection{$\lambda$-independence of algebraic monodromy groups}

Let $K$ and $E$ be number fields and $\{\rho_\lambda:\Gal_K\to\GL_n(\overline E_\lambda)\}_\lambda$ 
be a semisimple compatible system of $K$ defined over $E$. The algebraic monodromy group $\bG_\lambda$
of $\rho_\lambda$ is a reductive subgroup of $\GL_{n,\overline E_\lambda}$.
Serre \cite{Ser81} (see also \cite{LP92}) proved the following important theorem regarding the $\lambda$-independence of $\bG_\lambda$. 

\begin{theorem}\label{thm_Serre} (Serre)
    Let $\{\rho_{\lambda}\}_{\lambda}$ be a semisimple compatible system.
    \begin{enumerate}[\normalfont(i)]
        \item The component group $\bG_{\lambda}/\bG_{\lambda}^{\circ}$ is independent of $\lambda$.
        \item The formal character of $\bG_{\lambda} \subset \mathrm{GL}_{n,\overline E_\lambda}$ is independent of $\lambda$. In particular, the rank of $\bG_{\lambda}$ is independent of $\lambda$. 
    \end{enumerate}
\end{theorem}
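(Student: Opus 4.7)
The plan is to extract both invariants—the component group and the formal character—from the $\lambda$-independent data packaged in the unramified Frobenius characteristic polynomials $\Phi_v(T)\in E[T]$, together with Chebotarev density applied to the (by construction Zariski-dense) image $\rho_\lambda(\mathrm{Gal}_K)\subset\bG_\lambda(\overline E_\lambda)$.

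For part (ii), since the system is semisimple, the semisimple part $s_v^{(\lambda)}$ of $\rho_\lambda(\mathrm{Frob}_v)$ has eigenvalues on the tautological representation equal to the multiset of roots of $\Phi_v(T)$. The crucial geometric input is that the regular semisimple locus of the reductive group $\bG_\lambda^\circ$—the set of elements generating a maximal torus up to finite index—is Zariski open and nonempty. Combining Chebotarev density with Zariski density of the image, the set of $v$ for which $s_v^{(\lambda)}$ is regular semisimple has positive Dirichlet density, and for any such $v$ the $\GL_{n,\overline E_\lambda}$-conjugacy class of $s_v^{(\lambda)}$—which is determined solely by the roots of $\Phi_v(T)$—realizes the formal character of $\bG_\lambda$. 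Intersecting the positive-density sets for two primes $\lambda,\lambda'$ produces a common $v$, forcing the two formal characters to agree over $\mathbb{C}$ since they arise from the same polynomial.

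For part (i), define $K^{\mathrm{conn}}_\lambda$ to be the fixed field of $\rho_\lambda^{-1}(\bG_\lambda^\circ(\overline E_\lambda))$, so that $\mathrm{Gal}(K^{\mathrm{conn}}_\lambda/K)\cong\bG_\lambda/\bG_\lambda^\circ$; it suffices to show $K^{\mathrm{conn}}_\lambda$ is $\lambda$-independent. By Chebotarev this reduces to showing that the partition of $\{v\notin S\}$ according to the coset of $\rho_\lambda(\mathrm{Frob}_v)$ in $\bG_\lambda/\bG_\lambda^\circ$ depends only on $\Phi_v(T)$. I would pass to a sufficiently large tensor construction $V^{\otimes a}\otimes (V^\vee)^{\otimes b}$: the finite group $\pi_0(\bG_\lambda)$ acts on the finite-dimensional space of $\bG_\lambda^\circ$-invariants there, and by Chevalley--Weyl one can arrange this action to be faithful. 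The trace of $\rho_\lambda(\mathrm{Frob}_v)$ on each such invariant subspace is a universal polynomial in the Newton power sums of the roots of $\Phi_v(T)$, hence $\lambda$-independent; together with the character theory of the finite group $\pi_0(\bG_\lambda)$, these traces determine the coset and hence $K^{\mathrm{conn}}_\lambda$.

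The principal obstacle is the uniformity of the component-detection step in (i): the finite group $\pi_0(\bG_\lambda)$, the tensor construction chosen, and the resulting invariant subspace all a priori depend on $\lambda$, and one must verify that the resulting trace identities are genuinely $\lambda$-independent. In practice one first invokes part (ii) to fix the formal character across $\lambda$, and then bootstraps to a common description of $\bG_\lambda^\circ$-invariants in tensor powers. Part (ii) itself is comparatively routine once one has Zariski-openness of the regular semisimple locus in $\bG_\lambda^\circ$, which is a consequence of reductivity (in turn coming from semisimplicity of the $\rho_\lambda$).
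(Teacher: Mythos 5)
The paper does not prove this theorem; it is quoted from Serre \cite{Ser81} (see also \cite{LP92}). I therefore assess your proposal on its own.

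\textbf{Part (ii).} The Frobenius-torus philosophy is the right one, but you conflate two distinct loci. The regular semisimple locus of $\bG_\lambda^\circ$ (elements whose centralizer has minimal dimension) is indeed Zariski open, but a regular semisimple $g$ need not generate a maximal torus up to finite index: its eigenvalues can satisfy multiplicative relations (beyond those forced by membership in $\bG_\lambda$) that collapse $\overline{\langle g\rangle}^\circ$ to a proper subtorus of its centralizer. The locus avoiding all such extra relations is the complement of a \emph{countable} union of proper closed subvarieties and is not Zariski open, so the Chebotarev step does not go through as stated. The argument is repaired by observing that the roots of $\Phi_v(T)\in E[T]$ lie in a number field of bounded degree over $\Q$, so any root-of-unity ratio among them has bounded order; the bad locus thus becomes a \emph{finite} union of proper closed subsets, and positive Dirichlet density of Frobenii whose Frobenius torus is maximal follows. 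With that input added, (ii) works, and your observation that the $\GL_n$-conjugacy class of such a Frobenius torus is determined by $\Phi_v(T)$ (and hence $\lambda$-independent) is correct.

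\textbf{Part (i).} Here there is a genuine error, not merely the uniformity obstacle you flag. You assert that the trace of $\rho_\lambda(\mathrm{Frob}_v)$ on the invariant subspace $\bigl(V^{\otimes a}\otimes (V^\vee)^{\otimes b}\bigr)^{\bG_\lambda^\circ}$ is a universal polynomial in the Newton power sums of the roots of $\Phi_v(T)$. It is not: that subspace depends on the embedded subgroup $\bG_\lambda^\circ\subset\GL_n$ and not merely on the $\GL_n$-conjugacy class of the Frobenius, and computing the trace on it requires a (twisted Weyl) integration over $\bG_\lambda^\circ$, which depends on the embedded root datum. Your proposed bootstrap via (ii) does not close this gap: the formal bi-character fixes only the maximal torus and its weight multiset, not the Weyl group or root datum --- Theorem~\ref{thm_LP90}(b) in the paper gives pairs of non-isomorphic semisimple groups with the same formal character, and these have different Weyl group orders and hence different invariant dimensions in suitable tensor constructions. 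The Serre/Larsen--Pink argument for (i) proceeds differently: it characterizes the connected-monodromy field $K^{\mathrm{conn}}$ intrinsically through Frobenius tori and densities (essentially, the largest extension at which the monodromy remains connected, detected by rank behavior of Frobenius tori), not by matching the $\pi_0$-action on $\lambda$-dependent tensor invariants across $\lambda$.
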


Theorem~\ref{thm_Serre}(ii) was generalized to the theorem below by the first author of this paper.

\begin{theorem}{\cite[Theorem 3.19, Remark 3.22]{Hui13}}\label{thm_formal bi-char indep}
    Let $\{\rho_{\lambda}\}_{\lambda}$ be a semisimple compatible system. 
		Then the formal bi-character of $\bG_{\lambda} \subset \mathrm{GL}_{n,\overline E_\lambda}$ is independent of $\lambda$. In particular, the rank and the semisimple rank of $\bG_{\lambda}$ are both independent of $\lambda$.
\end{theorem}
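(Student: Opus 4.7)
The plan is to bootstrap Serre's formal character independence (Theorem~\ref{thm_Serre}(ii)) by extracting the derived structure from higher tensor constructions of $\rho_\lambda$. By Theorem~\ref{thm_Serre}(ii), after $\GL_n$-conjugation I may fix a common maximal torus $T \subset \GL_{n,\overline E_\lambda}$ of $\bG_\lambda$ so that the embedding $T \hookrightarrow \GL_n$ does not depend on $\lambda$. Writing $T'_\lambda := T \cap \bG_\lambda^{\der}$, the formal bi-character is the $\GL_n$-conjugacy class of the chain $[T'_\lambda \subset T]$, so it remains to prove that the subtorus $T'_\lambda$ does not depend on $\lambda$. Equivalently, the saturated subgroup
\[
X^*(T/T'_\lambda) \;=\; \bigl\{\chi \in X^*(T) : \chi \text{ extends to a character of } \bG_\lambda^\circ\bigr\} \;\subset\; X^*(T)
\]
should be $\lambda$-independent.

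The key observation is that every such extending $\chi$ is a one-dimensional algebraic representation of $\bG_\lambda^\circ$, and since $\rho_\lambda$ is a faithful representation of $\bG_\lambda$, Chevalley's theorem realizes $\chi$ as a $\bG_\lambda^{\der}$-invariant line inside some tensor construction $\sigma_\lambda^{(a,b)} := \rho_\lambda^{\otimes a} \otimes (\rho_\lambda^{\vee})^{\otimes b}$. The family $\{\sigma_\lambda^{(a,b)}\}_\lambda$ is itself a semisimple compatible system, since tensor products and duals of compatible systems are compatible, and my goal is to show that the one-dimensional $\bG_\lambda^\circ$-subrepresentations of $\sigma_\lambda^{(a,b)}$ piece together across $\lambda$ into $1$-dimensional compatible subsystems $\{\chi_\lambda\}_\lambda$ of Galois characters. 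Once that is established, Theorem~\ref{thm_Serre}(ii) applied in rank one forces the corresponding weights $\chi \in X^*(T)$ to be $\lambda$-independent.

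The hard part will be ensuring that this character decomposition of $\sigma_\lambda^{(a,b)}$ is uniform in $\lambda$; a priori the decomposition into $\bG_\lambda^\circ$-isotypic components could shift with $\lambda$. The decisive tool is Chebotarev density combined with compatibility: the Frobenius traces $\mathrm{tr}\,\rho_\lambda(\mathrm{Frob}_v)$ at unramified $v$ lie in $E$ and are $\lambda$-independent, and the same holds for every $\sigma_\lambda^{(a,b)}$. From this trace data one should recover, uniformly in $\lambda$, the multiplicity with which each weight $\chi \in X^*(T)$ occurs as a $\bG_\lambda^\circ$-fixed line in $\sigma_\lambda^{(a,b)}$. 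Letting $(a,b)$ range, this pins down $X^*(T/T'_\lambda) \subset X^*(T)$ independently of $\lambda$, yielding bi-character independence; the statements about the rank and the semisimple rank then follow at once since both are invariants of the formal bi-character.
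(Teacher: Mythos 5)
Your reduction is set up correctly: after Serre's formal-character independence you may fix the ambient maximal torus $T$, and $T'_\lambda = T\cap\bG_\lambda^{\der}$ is pinned down by the saturated sublattice $X^*(T/T'_\lambda)\subset X^*(T)$, i.e.\ by the characters of $\bG_\lambda^\circ$ restricted to $T$; Chevalley does realize each such character inside some $\sigma^{(a,b)}_\lambda$. But the step you yourself flag as ``the hard part'' --- that the one-dimensional $\bG_\lambda^\circ$-subrepresentations of $\sigma^{(a,b)}_\lambda$ form $\lambda$-independent compatible subsystems --- is a genuine gap, and the Chebotarev/trace argument you invoke does not close it. The averaging device that computes invariant multiplicities from traces is an integral $\int_\Gamma \mathrm{tr}\,\sigma(g)\,dg$ against Haar measure; this works over $\C$ for compact groups and for representations factoring through finite quotients of prime-to-$\ell$ order, but fails $\ell$-adically because the normalizing factors $1/[\Gamma:U]$ do not converge. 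More concretely, knowing that the Frobenius eigenvalues of a character $\chi_\lambda$ divide those of $\sigma^{(a,b)}_\lambda$ on a density-one set (which is all that compatibility plus Chebotarev gives you --- exactly the ``weak abelian direct summand'' condition of \cite{BH24}) does \emph{not} imply $\chi_\lambda$ is an actual subrepresentation: the trivial character is a weak abelian direct summand of $\mathrm{ad}(\rho)$ whenever $\bG_\rho=\SL_2$ (the Cartan provides the eigenvalue $1$), yet $\mathrm{ad}$ is irreducible. So you cannot pin down the multiset of $\chi$'s occurring as $\bG_\lambda^\circ$-fixed lines from trace data alone; this is precisely the circularity that the theorem is meant to break.

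The proof in \cite[Theorem 3.19]{Hui13} is cited here without being reproduced, but it does not proceed by decomposing higher tensor powers into one-dimensional pieces. It instead combines Serre's formal-character independence with Lie-theoretic analysis of equal-rank reductive subalgebras of $\mathfrak{gl}_n$ and with the structure theory of abelian $\ell$-adic representations \cite{Ser98} to control the abelianization $\bG_\lambda^\circ\twoheadrightarrow\bG_\lambda^\circ/\bG_\lambda^{\der}$ uniformly in $\lambda$, rather than trying to extract one-dimensional compatible subsystems from trace averages. If you want to push your approach through, the missing ingredient is a substitute for the averaging step --- e.g.\ a theorem guaranteeing that abelian subrepresentations (not merely weak direct summands) of a compatible system match up across $\lambda$ --- and that is not something Chebotarev gives for free.
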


Let $\lambda_1$ and $\lambda_2$ be two primes in $\Sigma_E$.
For $i=1,2$, take a maximal torus $\bT_{\lambda_i}$ of $\bG_{\lambda_i}$ 
and let $\bT_{\lambda_i}'$ be $\bG_{\lambda_i}^{\der}\cap\bT_{\lambda_i}$;
embed $\overline E_{\lambda_i}$ into $\C$.
By Theorem \ref{thm_formal bi-char indep}, there exists $g\in\GL_n(\C)$ such that 
$$g(\bT_{\lambda_1}'\subset\bT_{\lambda_1})g^{-1}=(\bT_{\lambda_2}'\subset\bT_{\lambda_2}),$$
which induces isomorphisms between the character groups $\mathbb{X}(\bT_{\lambda_1})\simeq \mathbb{X}(\bT_{\lambda_2})$
and $\mathbb{X}(\bT_{\lambda_1}')\simeq \mathbb{X}(\bT_{\lambda_2}')$.

\section{Polarizability of $\pi$}\label{sec_ess self duality}


The goal of this section is to prove that the automorphic representation $\pi$ in 
Theorem~\ref{thm_main thm wo polarizability} is polarized
(Proposition~\ref{prop_essential self-duality}). By using the type $A_1$-structure of the algebraic monodromy group, we will prove that the Galois representation $\rho_{\pi,\lambda_0}$ 
is self-dual up to a certain character $\chi'$. Furthermore, we will prove that $\chi'$ is locally algebraic by \cite{BH24}, so it corresponds to an algebraic Hecke character. This allows us to conclude that $\pi$ is polarized under the assumptions of Theorem ~\ref{thm_main thm wo polarizability}.

\subsection{Local algebraicity}

In this subsection, we briefly introduce the notion of locally algebraic abelian representation. For further details and proofs of the results in this subsection, we refer the reader to \cite[Chapters II and III]{Ser98}.

Let $K$ be a number field, and define the $[K:\mathbb Q]$-dimensional $\mathbb Q$-torus 
$T:=\mathrm{Res}_{K/\mathbb Q}(\mathbb G_{m/K})$ by restriction of scalars. 
If $A$ is an $\Q$-algebra, then the group of $A$-points of $T$ is $(K\otimes_\Q A)^\times$.
In particular,
\begin{align*}
    T(\Q_\ell)=(K \otimes \mathbb Q_{\ell})^{\times}=\prod_{v \mid \ell} K_v^{\times}.
\end{align*}
Using the Artin reciprocity map, we obtain a homomorphism
\begin{align*}
    i_{\ell}: (K \otimes \mathbb Q_{\ell})^{\times} \to \prod_{v \mid \ell} K_v^{\times} \to \mathbb A_K^{\times} \to \mathrm{Gal}_K^{\mathrm{ab}},
\end{align*}
where $\mathbb A_K^{\times}$ is the group of id\`{e}les of $K$.

\begin{definition}
    An abelian $\ell$-adic representation $\rho: \mathrm{Gal}_K^{\ab} \to \GL_n(\overline \Q_\ell)$ is locally algebraic if there is a $\overline\Q_\ell$-algebraic morphism
    \begin{align*}
        f_\ell: T_{\overline\Q_\ell} \to \GL_{n,\overline\Q_\ell}
    \end{align*}
    such that $\rho \circ i_{\ell}(x)=f_\ell(x^{-1})$ for 
		$x \in T(\mathbb Q_{\ell})\subset T(\overline\Q_\ell)$ that is sufficiently close to $1$ with respect to the $\ell$-adic Lie group structure on $T(\mathbb Q_{\ell})$.
\end{definition}

Similarly, we can define the local algebraicity of a local representation. Let $K_v$ be a finite field extension of $\mathbb Q_{\ell}$. Define similarly $T_v:=\mathrm{Res}_{K_v/\mathbb Q_{\ell}}(\mathbb G_{m,K_v})$ as a $\mathbb Q_{\ell}$-torus. We have
\begin{align*}
    T_v(\mathbb Q_{\ell})=K_v^{\times}
\end{align*}
and
\begin{align*}
    i_{v}:K_v^{\times} \to \mathrm{Gal}_{K_v}^{\mathrm{ab}}
\end{align*}
given by local class field theory.

\begin{definition}
    An abelian $\ell$-adic representation $\rho: \mathrm{Gal}_{K_v}^{\ab} \to \GL_n(\overline\Q_\ell)$ is locally algebraic if there is a $\overline{\mathbb Q}_{\ell}$-algebraic morphism
    \begin{align*}
        f_v: T_{v,\overline\Q_\ell} \to \GL_{n,\overline\Q_\ell}
    \end{align*}
    such that $\rho \circ i_{v}(x)=f_v(x^{-1})$ for $x \in T_v(\mathbb Q_{\ell})\subset T_v(\overline\Q_\ell)$ that is sufficiently close to $1$ with respect to the $\ell$-adic Lie group structure on $T_v(\mathbb Q_{\ell})$.
\end{definition}

The local algebraicity of an abelian $\ell$-adic representation of $K$ is determined solely by the local algebraicity of its local representations. 

\begin{proposition}
    An abelian $\ell$-adic representation $\rho: \mathrm{Gal}_K^{\ab} \to \mathrm{GL}_n(\overline\Q_\ell)$ of a number field $K$ is locally algebraic if and only if its local representation $\rho|_{\mathrm{Gal}_{K_v}^{\ab}}$ is locally algebraic for all $v$ above $\ell$. Moreover, it is equivalent to $\rho|_{\mathrm{Gal}_{K_v}^{\ab}}$ being Hodge-Tate for all 
		$v$ above $\ell$.
\end{proposition}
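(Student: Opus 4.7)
The plan is to unwind the definitions using the natural product decomposition of the $\Q$-torus $T=\mathrm{Res}_{K/\Q}(\bG_{m/K})$ after base change to $\Q_\ell$, and then to invoke the local Hodge-Tate $\Leftrightarrow$ locally algebraic theorem of Serre--Tate. The essential observation is that $K\otimes_\Q \Q_\ell=\prod_{v\mid\ell}K_v$ induces a canonical isomorphism of $\Q_\ell$-tori $T_{\Q_\ell}\cong \prod_{v\mid\ell}T_v$, and correspondingly a factorization of the global Artin map as $i_\ell(\prod_v x_v)=\prod_v i_v(x_v)$ (embedding each $\mathrm{Gal}_{K_v}^{\mathrm{ab}}\to\mathrm{Gal}_K^{\mathrm{ab}}$). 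Thus the compatibility condition defining local algebraicity splits into $v$-components.

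For the ``only if'' direction of the first equivalence, I would restrict a global algebraic $f_\ell:T_{\overline\Q_\ell}\to\GL_{n,\overline\Q_\ell}$ to each closed subtorus $T_{v,\overline\Q_\ell}\hookrightarrow T_{\overline\Q_\ell}$ to define $f_v$; the required identity $\rho\circ i_v(x)=f_v(x^{-1})$ for $x\in T_v(\Q_\ell)$ near $1$ follows from the global identity together with the factorization of $i_\ell$. For ``if'', given local morphisms $\{f_v\}_{v\mid\ell}$, I would define $f_\ell:=\prod_{v\mid\ell} f_v$ as a morphism $T_{\overline\Q_\ell}\to\GL_{n,\overline\Q_\ell}$. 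The only subtlety is that this product is a group homomorphism only if the images of the various $f_v$ commute pairwise in $\GL_{n,\overline\Q_\ell}$; but on a small neighborhood of $1$ each $f_v(x_v^{-1})$ equals $\rho\circ i_v(x_v)$, which lies in the image of the abelian representation $\rho$, so these values commute. By Zariski density of such $x_v$ in $T_{v,\overline\Q_\ell}$ and rigidity of algebraic morphisms of tori, the full images $f_v(T_{v,\overline\Q_\ell})$ commute, so $f_\ell$ is a well-defined algebraic morphism of groups. The compatibility $\rho\circ i_\ell=f_\ell(\cdot^{-1})$ on a neighborhood of $1$ then follows factor by factor.

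The second equivalence, between local algebraicity of $\rho|_{\mathrm{Gal}_{K_v}^{\mathrm{ab}}}$ for all $v\mid\ell$ and the Hodge-Tate property at all $v\mid\ell$, is purely local, so it suffices to handle a single place. The plan here is to cite Serre's theorem (Chapter III of \cite{Ser98}), which asserts that an abelian $\ell$-adic representation of $\mathrm{Gal}_{K_v}^{\mathrm{ab}}$ is locally algebraic if and only if it is Hodge-Tate. The forward direction is easy: a locally algebraic representation, being rational on the torus $T_v$, visibly has integer-valued Hodge-Tate weights coming from the characters of the composition with $T_{v,\overline\Q_\ell}\to\GL_{n,\overline\Q_\ell}$.

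The real obstacle is the converse: that Hodge-Tate implies locally algebraic. This rests on Tate's/Sen's theory of Hodge-Tate decompositions for $\mathrm{Gal}_{K_v}$-representations on $\overline\Q_\ell$-vector spaces, which forces the semisimple part of the Sen operator to be integral with prescribed eigenvalues and, combined with the $p$-adic Lie-theoretic analysis of the image of the local Artin map on $\mathcal{O}_{K_v}^\times$, yields the algebraic character $f_v$. Since the proposition is an exposition of the classical theory, I would simply cite \cite[Ch.~III]{Ser98} for this deep local input rather than reprove it.
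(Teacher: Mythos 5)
Your proposal is correct, and it follows essentially the same route as the paper: the paper's ``proof'' is just a citation to Serre's book \cite{Ser98} (and to \cite[\S 2.4]{BH24}), and your argument simply unpacks what that citation contains. The product decomposition $T_{\Q_\ell}\cong\prod_{v\mid\ell}T_v$ together with the corresponding factorization of $i_\ell$ is precisely Serre's mechanism for reducing the global statement to the local places above $\ell$, and you correctly isolate the one genuinely deep input---the Serre--Tate theorem that Hodge--Tate $\Rightarrow$ locally algebraic for abelian representations of $\mathrm{Gal}_{K_v}$---and cite it rather than reprove it, exactly as the paper does. Your treatment of the ``if'' direction, where you use Zariski density of a neighborhood of the identity in $K_v^\times$ and abelianness of $\mathrm{im}(\rho)$ to see that the partial morphisms $f_v$ glue to a group homomorphism $f_\ell=\prod_v f_v$, is a worthwhile expansion on a point Serre leaves implicit. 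The only place you could be slightly more careful is in noting that a continuous $\rho$ lands in $\mathrm{GL}_n(E_\lambda)$ for some finite $E_\lambda/\Q_\ell$, so the local theorem over finite extensions applies; but this does not affect correctness.
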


For potentially abelian (and in particular, abelian) representations, being Hodge-Tate is equivalent to being de Rham (\cite[$\mathsection$6]{FM95}). We also note that locally algebraic $\ell$-adic characters correspond to the algebraic Hecke characters.

The notion of $E$-rationality is related to local algebraicity as follows.

\begin{proposition}\label{prop_local algebraicity and compatible system}
    Let $\rho: \mathrm{Gal}_K^{\ab} \to \mathrm{GL}_n(\overline{\mathbb Q}_{\ell})$ be an abelian semisimple $\ell$-adic Galois representation. Then the following statements are equivalent.
    \begin{enumerate}[\normalfont(1)]
        \item $\rho$ is locally algebraic.
        \item $\rho$ is part of a semisimple compatible system of $K$ defined over some number field $E \subset \overline{\mathbb Q}_{\ell}$.
        \item $\rho$ is $E$-rational for some number field $E$.
    \end{enumerate}
\end{proposition}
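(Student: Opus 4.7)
The plan is to establish the cycle of implications $(2) \Rightarrow (3) \Rightarrow (1) \Rightarrow (2)$, the bulk of which is classical work of Serre available in Chapter~III of \cite{Ser98}.

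The implication $(2) \Rightarrow (3)$ is immediate from Definition~\ref{csdef}: any member of a semisimple compatible system defined over $E$ has all its unramified Frobenius characteristic polynomials in $E[T]$, hence is $E$-rational by construction.

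For $(3) \Rightarrow (1)$, I would invoke Serre's criterion of local algebraicity. Since $\rho$ is abelian and semisimple, after enlarging $E$ if necessary we may decompose $\rho$ into $n$ continuous characters $\chi_i: \Gal_K^{\ab} \to \overline{E}_\lambda^\times$, each unramified outside a finite set, whose values at Frobenius elements lie in $E$. By Chebotarev density the Frobenius values determine each $\chi_i$ on a topologically dense subset of $\Gal_K^{\ab}$, and the analysis in Chapter~III of \cite{Ser98} then shows that each $\chi_i \circ i_\ell$ agrees, on some neighborhood of $1$ in $T(\Q_\ell)$, with a $\overline{\Q}_\ell$-algebraic character of $T_{\overline{\Q}_\ell}$. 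Taking the direct sum of these characters yields the required algebraic morphism $f_\ell: T_{\overline{\Q}_\ell} \to \GL_{n, \overline{\Q}_\ell}$.

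Finally, for $(1) \Rightarrow (2)$, local algebraicity combined with semisimplicity implies that $\rho$ arises from an algebraic Hecke character with values in $\GL_n$ defined over some number field $E \subset \overline{\Q}_\ell$; equivalently, $\rho$ factors through a morphism from the Serre group. Varying $\ell$ while keeping the algebraic datum $f$ fixed produces the desired family $\{\rho_\lambda\}_\lambda$, and agreement of unramified Frobenius characteristic polynomials with a fixed polynomial in $E[T]$ follows because $f$ is defined over $\Q$ and the Frobenius at an unramified place $v$ corresponds under $i_\ell$ to an id\`ele supported at $v$. The main obstacle is $(3) \Rightarrow (1)$: translating algebraicity of Frobenius eigenvalues into an algebraic morphism on a neighborhood of the identity requires Serre's full machinery of locally algebraic abelian representations, but since this is entirely worked out in \cite{Ser98} we would cite rather than reprove it.
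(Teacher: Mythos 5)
Your proposal is correct and takes essentially the same route as the paper: the paper's proof simply defers to Serre's theory of locally algebraic abelian $\ell$-adic representations in \cite{Ser98} (also cited via \cite[$\mathsection$2.4]{BH24}), which is exactly the machinery you invoke for $(3)\Rightarrow(1)$ and $(1)\Rightarrow(2)$. The only point worth flagging is that for a general number field $K$ (not just $\Q$ or composites of quadratic fields) the implication $(3)\Rightarrow(1)$ relies on the transcendence results of Waldschmidt included in the $1998$ edition of \cite{Ser98}, so the citation is doing real work there.
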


\begin{proof}
    The proof is provided in \cite{Ser98} and is also formulated in \cite[$\mathsection$2.4]{BH24}.
\end{proof}

\subsection{Weak abelian direct summand and polarizability}

In a recent paper by B\"{o}ckle and the first author \cite{BH24}, the concept of weak abelian direct summands was introduced.

\begin{definition}
    Let $K$ be a number field, and let $\rho: \mathrm{Gal}_K \to \mathrm{GL}_n(\overline{\mathbb Q}_{\ell})$ and $\psi: \mathrm{Gal}_K \to \mathrm{GL}_m(\overline{\mathbb Q}_{\ell})$ be semisimple $\ell$-adic representations, unramified outside finite $S_{\rho}$ and $S_{\psi}$ in $\Sigma_K$, respectively. We say that $\psi$ is a weak abelian direct summand of $\rho$ if $\psi$ is abelian and the set
    \begin{align*}
        S_{\psi \mid \rho}:=\{v \in \Sigma_K : \Phi_{\psi, v}(T) \text{ divides } \Phi_{\rho, v}(T)\}
    \end{align*}
    is of Dirichlet density one, where $\Phi_{\rho, v}(T)$ is defined in \eqref{Frobpoly}.
\end{definition}

In particular, if $\psi$ is an abelian subrepresentation of $\rho$ then it is a weak abelian direct summand of $\rho$.
One of the main results in \cite{BH24} is the following theorem, which states that 
weak abelian direct summands of $E$-rational representations are locally algebraic.

\begin{theorem}{\cite[Theorem 1.2]{BH24}}\label{thm_weak abelian direct summand-locally algebraic}
    Let $K$ and $E \subset \overline{\mathbb Q}_{\ell}$ be number fields, and let $\rho_{\ell}: \mathrm{Gal}_K \to \mathrm{GL}_n(\overline{\mathbb Q}_{\ell})$ be an $E$-rational semisimple $\ell$-adic Galois representation of $K$. If $\psi_{\ell}$ is a weak abelian direct summand of $\rho_{\ell}$, then it is locally algebraic. 
\end{theorem}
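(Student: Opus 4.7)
My plan is to reduce the theorem to showing that $\psi_\ell$ is $E'$-rational for some number field $E' \subset \overline{\mathbb Q}_\ell$, since then the implication (3)$\Rightarrow$(1) of Proposition~\ref{prop_local algebraicity and compatible system} immediately delivers local algebraicity. Because $\psi_\ell$ is abelian and semisimple, after extending scalars to $\overline{\mathbb Q}_\ell$ I may decompose
$$\psi_\ell \cong \chi_1 \oplus \cdots \oplus \chi_m$$
as a direct sum of continuous characters $\chi_i\colon \Gal_K \to \overline{\mathbb Q}_\ell^\times$. The weak abelian direct summand hypothesis then says that for every $v$ in the density-one set $S_{\psi\mid\rho}$, each eigenvalue $\chi_i(\mathrm{Frob}_v)$ is a root of $\Phi_{\rho_\ell,v}(T) \in E[T]$, and therefore lies in $\overline E$ with degree at most $n$ over $E$.

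The central step is to upgrade this pointwise algebraicity to the uniform statement that there is a single number field $E'/E$ with $\Phi_{\psi_\ell,v}(T) \in E'[T]$ for all $v$ in a density-one set. My plan is to combine two ingredients. First, the image $\psi_\ell(\Gal_K)$ is a compact abelian subgroup of $\GL_m(\overline{\mathbb Q}_\ell)$, hence is contained in $\GL_m(L)$ for some finite extension $L/\mathbb Q_\ell$; by the Chebotarev density theorem this image is the topological closure of the Frobenius images at places in $S_{\psi\mid\rho}$. Second, each of these Frobenius images is algebraic over $E$ of bounded degree. Leveraging Serre's theory of abelian $\ell$-adic representations (in particular the Serre/Taniyama group of $K$), or equivalently Henniart's theorem identifying locally algebraic abelian representations with the $E$-rational ones, one can then force the coefficients of $\Phi_{\psi_\ell,v}$ into a common number field $E'$. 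Once this $E'$-rationality is in hand, Proposition~\ref{prop_local algebraicity and compatible system} closes the argument.

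The hard part will be precisely this uniform rationality bound. The pointwise estimate $[\chi_i(\mathrm{Frob}_v):E]\leq n$ is not sufficient on its own, since the union of all subfields of $\overline E$ of degree at most $n$ is vastly larger than any single number field, and no naive pigeonhole argument applies. Nor does one have direct access to Hodge--Tate theory, because $\rho_\ell$ is only assumed to be $E$-rational, not de Rham. The essential content of Theorem~\ref{thm_weak abelian direct summand-locally algebraic} is thus a genuine input from abelian $\ell$-adic representation theory: without Henniart's rationality theorem, or the classification of abelian $\ell$-adic representations of a number field via the Serre group, the reduction to $E'$-rationality cannot be completed.
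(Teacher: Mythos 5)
The paper does not prove this theorem; it is imported verbatim from \cite[Theorem 1.2]{BH24} as a black box, so there is no internal proof to compare against. Evaluating your sketch on its own terms: the skeleton is correct. Reducing to $E'$-rationality of $\psi_\ell$ for some number field $E'\subset\overline{\Q}_\ell$ and then invoking the implication (3)$\Rightarrow$(1) of Proposition~\ref{prop_local algebraicity and compatible system} is the right strategy, and the decomposition $\psi_\ell\cong\chi_1\oplus\cdots\oplus\chi_m$ together with the observation that each eigenvalue $\chi_i(\mathrm{Frob}_v)$ lies in $\overline E$ with degree at most $n$ over $E$, for $v$ in a density-one set, is a sound first step.

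The gap you flag in your final paragraph is real, and you do not close it. The passage from ``each Frobenius eigenvalue is algebraic of degree $\leq n$ over $E$'' to ``all Frobenius eigenvalues lie in a single number field $E'$'' is precisely the technical content of \cite{BH24}, and your proposed fix is circular: Henniart's theorem identifying locally algebraic abelian representations with $E$-rational ones is exactly the equivalence recorded in Proposition~\ref{prop_local algebraicity and compatible system}, so invoking it to ``force'' $E'$-rationality presupposes the local algebraicity you are trying to establish. Similarly, the Serre group only parametrizes abelian $\ell$-adic representations already known to be locally algebraic; it does not on its own upgrade a pointwise degree bound into a uniform coefficient field. What is missing is an actual argument that combines the compactness of $\psi_\ell(\Gal_K)\subset\GL_m(L)$ for a fixed finite extension $L/\Q_\ell$ with the algebraicity constraint over $E$ so as to pin the eigenvalues into a single number field; that argument is the substance of \cite[Theorem 1.2]{BH24}, which your sketch correctly identifies as necessary but does not supply.
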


We now use Theorem~\ref{thm_weak abelian direct summand-locally algebraic} to prove that 
the automorphic representation $\pi$ in Theorem~\ref{thm_main thm wo polarizability} is polarized (see $\mathsection2.2$ for definition).

\begin{proposition}\label{prop_essential self-duality}
    Let $K$ be a totally real field and $\{\rho_{\pi,\lambda}: \mathrm{Gal}_{K} \to \mathrm{GL}_n(\overline E_{\lambda})\}_{\lambda}$ be the compatible system attached to a regular algebraic cuspidal automorphic representation $\pi$ of $\mathrm{GL}_n(\mathbb A_K)$ that satisfies the assumptions in Theorem~\ref{thm_main thm wo polarizability}. Then $\pi$ is polarized.
\end{proposition}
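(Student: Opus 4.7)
The plan is to produce an algebraic Hecke character $\chi$ witnessing $\pi^{\vee}\cong\pi\otimes\chi$, first establishing the corresponding $\lambda_0$-adic isomorphism of Galois representations and then propagating it via $E$-rationality and strong multiplicity one. I begin by using the type $A_1$ hypothesis to obtain a character $\chi_0\colon\Gal_K\to\overline E_{\lambda_0}^{\times}$ with $\rho_{\pi,\lambda_0}^{\vee}\cong\rho_{\pi,\lambda_0}\otimes\chi_0$. Because $\bG_{\lambda_0}$ is connected of type $A_1$, every irreducible representation of $\bG_{\lambda_0}^{\der}$ is self-dual: in case (c1) each basic factor $\mathrm{Sym}^{k_i}(\mathrm{std})$ of $\SL_2$ is self-dual, and in case (c2) the standard representation of $\SO_4$ preserves a symmetric bilinear form. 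Since $\rho_{\pi,\lambda_0}$ is irreducible, the connected center $Z(\bG_{\lambda_0})^\circ$ acts by scalars, so the tautological representation remains irreducible on $\bG_{\lambda_0}^{\der}$ and is self-dual there. Schur's lemma then produces a character of $\bG_{\lambda_0}/\bG_{\lambda_0}^{\der}$ whose composition with $\rho_{\pi,\lambda_0}$ is the desired $\chi_0$.

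Next I would show that $\chi_0$ extends to a compatible system. The evaluation pairing combined with $\rho_{\pi,\lambda_0}^{\vee}\cong\rho_{\pi,\lambda_0}\otimes\chi_0$ gives an embedding $\chi_0^{-1}\hookrightarrow\rho_{\pi,\lambda_0}\otimes\rho_{\pi,\lambda_0}$. Since $\{\rho_{\pi,\lambda}\otimes\rho_{\pi,\lambda}\}_\lambda$ is an $E$-rational semisimple compatible system and $\chi_0^{-1}$ is abelian, $\chi_0^{-1}$ is a weak abelian direct summand of $\rho_{\pi,\lambda_0}\otimes\rho_{\pi,\lambda_0}$. Theorem~\ref{thm_weak abelian direct summand-locally algebraic} then yields local algebraicity of $\chi_0$, and Proposition~\ref{prop_local algebraicity and compatible system} produces, after possibly enlarging $E$, a compatible system $\{\chi_\lambda\}_\lambda$ associated with an algebraic Hecke character $\chi$ of $K$.

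Finally I would propagate the isomorphism and transfer it to $\pi$. For almost every finite place $v$ of $K$, the Frobenius-trace identity
\[
\mathrm{tr}(\rho_{\pi,\lambda_0}^{\vee}(\mathrm{Frob}_v))=\chi_{\lambda_0}(\mathrm{Frob}_v)\cdot\mathrm{tr}(\rho_{\pi,\lambda_0}(\mathrm{Frob}_v))
\]
is an equality in $E$, hence independent of $\lambda$; by Brauer--Nesbitt and Chebotarev this upgrades to $\rho_{\pi,\lambda}^{\vee}\cong\rho_{\pi,\lambda}\otimes\chi_\lambda$ for every $\lambda$. Comparing Satake parameters at unramified places and invoking strong multiplicity one for $\GL_n(\A_K)$ yields $\pi^{\vee}\cong\pi\otimes\chi$, which is essential self-duality. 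I expect the main obstacle to be the middle step, namely verifying that the abstract character $\chi_0$ produced by Schur's lemma actually descends to a Hecke character: this is precisely the content of the B\"ockle--Hui theorem on weak abelian direct summands, while the first and third steps are standard.
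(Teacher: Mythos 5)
Your overall strategy matches the paper's: use the type $A_1$ hypothesis and Schur's lemma to get a $\lambda_0$-adic self-duality up to twist, realize the twisting character as a weak abelian direct summand of $\rho_{\pi,\lambda_0}\otimes\rho_{\pi,\lambda_0}$, apply the B\"ockle--Hui theorem to get local algebraicity, and then pass to $\pi$ via strong multiplicity one. The first two steps are fine, and in fact your Schur's-lemma formulation (obtain a character of $\bG_{\lambda_0}/\bG_{\lambda_0}^{\der}$) is slightly cleaner than the paper's explicit computation with $\det_1^{k_1}\otimes\cdots\otimes\det_r^{k_r}$ factoring through $\bG_{\lambda_0}$.

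However, there is a genuine gap at the end. You conclude from $\pi^{\vee}\cong\pi\otimes\chi$ (with $\chi$ a Hecke character) that $\pi$ is \emph{essentially self-dual}, but in the sense of \cite{BLGGT14} that the paper uses, essential self-duality (i.e.\ being \emph{polarized}) is not merely the existence of some Hecke character $\chi$ with $\pi^{\vee}\cong\pi\otimes(\chi\circ\det)$: it additionally requires the sign condition that $\chi_v(-1)$ is independent of $v\mid\infty$. Your argument never addresses this, and correspondingly you never use the hypothesis from Theorem~\ref{thm_main thm wo polarizability} that $K=\Q$ or $n$ is odd. That hypothesis is precisely what the paper needs at this step: it invokes \cite[Lemma 2.1]{CG13} to conclude that $(\xi_{\chi})_v(-1)$ is indeed independent of $v\mid\infty$ under the assumption $K=\Q$ or $n$ odd. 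Without this verification, you have only shown $\pi$ is self-dual up to a Hecke-character twist, not that it is polarized, and the conclusion would then be false at the level of generality claimed (this is exactly why the extra hypothesis appears in Theorem~\ref{thm_main thm wo polarizability}). Separately, your ``propagation'' step via Frobenius traces and Brauer--Nesbitt to all $\lambda$ is harmless but superfluous: once $\chi_0$ is locally algebraic, the single isomorphism at $\lambda_0$ already transfers to $\pi$ by strong multiplicity one.
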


\begin{proof}
  Since $\rho_{\pi,\lambda_0}$ is irreducible with connected algebraic monodromy group $\bG_{\lambda_0}$, 
		by Schur's lemma, we have two cases: (a) $\bG_{\lambda_0}=\bG_{\lambda_0}^{\mathrm{der}}$, or 
		(b) $\bG_{\lambda_0}=\mathbb G_m \bG_{\lambda_0}^{\mathrm{der}}$.	
		We claim that 
    \begin{align}\label{eqn_self duality of Gal repn}
        \rho_{\pi,\lambda_0} \simeq \rho_{\pi,\lambda_0}^{\vee} \otimes \epsilon^{1-n}\chi
    \end{align}
    for some one-dimensional $\ell_0$-adic representation $\chi$ of $K$, where $\epsilon$ is the $\ell_0$-adic cyclotomic character of $K$. Since every representation of $\bG_{\lambda_0}^{\mathrm{der}}$ (of type $A_1$) is self-dual,
this holds in case (a) by taking $\chi=\epsilon^{n-1}$. 	
		To establish \eqref{eqn_self duality of Gal repn} in case (b), it suffices to show that $\iota \simeq \iota^\vee \otimes \alpha$, where $\iota$ is the faithful irreducible representation of $\bG_{\lambda_0} = \mathbb{G}_m \bG_{\lambda_0}^{\mathrm{der}}$, and $\alpha$ is a character of $\bG_{\lambda_0}$. 	Without loss of generality, assume $\bG_{\lambda_0}^{\mathrm{der}}$
		is of rank $r\geq 1$. Consider the irreducible representation 
$$\Phi:=\mathrm{Sym}^{k_1} \otimes \cdots \otimes \mathrm{Sym}^{k_r}: \mathrm{GL}_2 \times \cdots \times \mathrm{GL}_2 \to \GL_n$$
that factors through $\bG_{\lambda_0}\subset\GL_n$. It follows that
    \begin{align*}
        \Phi=\Phi^{\vee}\otimes(\mathrm{det}^{k_1} \otimes \cdots \otimes \mathrm{det}^{k_r}).    
    \end{align*}
    It remains to show that the character $\mathrm{det}_1^{k_1} \otimes \cdots \otimes \mathrm{det}_r^{k_r}$ of  $\mathrm{GL}_2 \times \cdots \times \mathrm{GL}_2$ factors through $\bG_{\lambda_0}$. Since any element $(g_1,g_2,\ldots,g_r)\in\mathrm{ker}\Phi$ is given by an element $(c_1,c_2,...,c_r)\in\prod_{i=1}^r\mathbb{G}_m$ 
		satisfying $c_1^{k_1}c_2^{k_2}\cdots c_r^{k_r}=1$, we have
    \begin{align*}
        \mathrm{det}(g_1)^{k_1}\cdots\mathrm{det}(g_r)^{k_r}=c_1^{2k_1}c_2^{2k_2}\cdots c_r^{2k_r}=(c_1^{k_1}c_2^{k_2}\cdots c_r^{k_r})^2=1.
    \end{align*}
		We obtain \eqref{eqn_self duality of Gal repn} and write $\chi'=\epsilon^{1-n}\chi$ as the 
		Galois character corresponding to $\alpha$.
    
    Thus, proving that $\chi$ is locally algebraic implies that it corresponds to an algebraic Hecke character 
		$\xi_{\chi}:\mathbb A_K^{\times}/K^{\times} \to \mathbb C^{\times}$ 
		such that $\pi\simeq \pi^\vee\otimes (\xi_{\chi}\circ\det)$ holds by \eqref{eqn_self duality of Gal repn}
		and the strong multiplicity one theorem \cite{JS81}. 
        

		Let us prove that $\chi$ is locally algebraic. We denote the underlying vector space of $\rho_{\pi,\lambda_0}$ by $V$. From \eqref{eqn_self duality of Gal repn}, we have
    \begin{align*}
        V \otimes V=V\otimes V^{\vee} \otimes \chi'=\mathrm{End}(V) \otimes \chi'
    \end{align*}
    as $\ell_0$-adic representations (i.e., the equality is in the sense of $\mathrm{Gal}_K$-equivariant isomorphisms). Note that $\mathrm{End}(V)$ contains a $1$-dimensional trivial subrepresentation, namely the subrepresentation generated by $\mathrm{id}_V \in \mathrm{End}(V)$, so $\chi'$ is a subrepresentation of $V \otimes V$. Since $\chi'$ is $1$-dimensional, it is a weak abelian direct summand of $E$-rational semisimple representation $\rho_{\pi,\lambda_0} \otimes \rho_{\pi,\lambda_0}$. By Theorem~\ref{thm_weak abelian direct summand-locally algebraic}, $\chi'$ is locally algebraic, and hence so is $\chi$.

    To finish, we show that $(\xi_{\chi})_v(-1)$ is independent of $v \mid \infty$.
		This is obviously true if $K=\Q$ (condition \ref{thm_main thm wo polarizability}(a)), so we may assume that $n$ is odd (condition \ref{thm_main thm wo polarizability}(b)). 
		Since $\chi(c_v)=(\xi_{\chi})_v(-1)$ (cf. \cite[p. 511--512]{BLGGT14})
		and $\chi'=\epsilon^{1-n}\chi$, it is enough to prove $\chi'(c_v)$ is independent of $v \mid \infty$. By \eqref{eqn_self duality of Gal repn}, we have
    \begin{align*}
        \det(\rho_{\pi,\lambda_0}(c_v))=\det({}^t\rho_{\pi,\lambda_0}(c_v)^{-1})\chi'(c_v)^n.
    \end{align*}
    Thus, $\chi'(c_v)^n=\det(\rho_{\pi,\lambda_0}(c_v))^2=1$, and consequently, $\chi'(c_v)=1$ since $n$ is odd.
\end{proof}

\section{Proof of Theorem~\ref{thm_main theorem}}

Henceforth, we assume $K$ is a totally real field and $\pi$ is a regular algebraic polarized cuspdial automorphic 
representation of $\GL_n(\A_K)$. Let $\{\rho_{\pi, \lambda}:\mathrm{Gal}_{K} \to \mathrm{GL}_n(\overline E_{\lambda})\}_{\lambda}$ be the regular semisimple strictly compatible system of $K$ defined over $E$ attached to $\pi$.
Suppose the conditions Theorem~\ref{thm_main theorem}(a)--(c) are satisfied. 
 By Theorem~\ref{thm_Serre}(i), the algebraic monodromy group $\bG_{\lambda}$ is connected for all $\lambda$. 
 The proof of Theorem~\ref{thm_main theorem} proceeds in two steps.
The first step is to show that Theorem~\ref{thm_main theorem} follows 
from Proposition \ref{prop the same Lie type}, which asserts the $\lambda$-independence of 
the Lie type of $\bG_\lambda$ (resp. the root system of $\bG_\lambda^{\der}$);
the second step is to prove Proposition \ref{prop the same Lie type} by constructing sufficiently many three-dimensional automorphic compatible systems
(via the potential automorphy theorem) and using the fact that 
the semisimple rank ($r\in\Z_{\geq0}$) of $\bG_{\lambda}$ is independent of $\lambda$ (Theorem~\ref{thm_formal bi-char indep}).
From now on, we assume this semisimple rank $r$ is positive, otherwise, $n=1$, in which case Theorem~\ref{thm_main theorem}
is obvious.

\subsection{Reduction to Proposition~\ref{prop the same Lie type}} 
We embed $\overline E_\lambda$ into $\C$ for all $\lambda$.
Theorem~\ref{thm_main theorem}(i) states that the conjugacy class of the base change 
$\bG_{\lambda,\C}$ in $\GL_{n,\C}$
is independent of $\lambda$, and Theorem~\ref{thm_main theorem}(ii) follows immediately from Theorem~\ref{thm_main theorem}(i). Moreover, Theorem~\ref{thm_main theorem}(iii) follows immediately from Theorem~\ref{thm_main theorem}(ii), due to the following two results. We note that by enlarging $E$ and replacing our compatible system to its corresponding coefficient extension (Definition~\ref{def_coe ext and res scalar}(i)), 
we may assume that $\rho_{\pi,\lambda}$ is $\mathrm{GL}_n(E_{\lambda})$-valued as we mentioned in Section~\ref{sec_automorphic Gal repn}.

\begin{proposition}{\cite[Proposition 2.11]{Hui23b}}\label{prop_strict-BT and PS}
  Let $\{\rho_{\lambda}: \mathrm{Gal}_K \to \mathrm{GL}_n(E_{\lambda})\}_{\lambda}$ be a  strictly compatible system of $K$ defined over $E$. Then there exist some integers $N_1, N_2 \geq 0$ and a finite extension $K'/K$ such that the following two conditions hold.
  \begin{enumerate}[\normalfont(a)]
      \item (Bounded tame inertia weights) for almost all $\lambda$ and each finite place $v$ of $K$ above $\ell$, the tame inertia weights of the local representation $(\overline{\rho}_{\lambda}^{\mathrm{ss}} \otimes \overline{\epsilon}_{\ell}^{N_1})|_{\mathrm{Gal}_K}$ belongs to $[0,N_2]$.
      \item (Potential semistability) for almost all $\lambda$ and each finite place $w$ of $K'$ not above $\ell$, the semisimplification of the local representation $\overline{\rho}_{\lambda}^{\mathrm{ss}}|_{\mathrm{Gal}_K}$ is unramified.
  \end{enumerate}
\end{proposition}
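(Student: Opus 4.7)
The plan is to handle the two conditions separately via two classical local tools: Fontaine--Laffaille theory for places $v\mid\ell$, and Grothendieck's local monodromy theorem combined with the $\lambda$-independence provided by strict compatibility for places $v\nmid\ell$.

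For (a), strict compatibility gives that the $\tau$-Hodge--Tate numbers $\mathrm{HT}_\tau(\rho_\lambda)$ form a fixed finite subset of $\Z$, uniform in both $\lambda$ and the embedding $\tau:K\hookrightarrow\overline E$. Take $N_1$ to be the negative of the smallest element of $\bigcup_\tau \mathrm{HT}_\tau(\rho_\lambda)$ and $N_2$ the spread of these weights, so that $\rho_\lambda\otimes\epsilon_\ell^{N_1}$ has all Hodge--Tate numbers in $[0,N_2]$. For all but finitely many rational primes $\ell$ no place $v\mid\ell$ of $K$ belongs to $S$, so $\rho_\lambda|_{\Gal_{K_v}}$ is crystalline at every $v\mid\ell$; imposing further $\ell>N_2+1$, the Fontaine--Laffaille functor applies at each such $v$ and yields an integral lattice whose mod-$\ell$ reduction is a Fontaine--Laffaille module with Hodge filtration concentrated in $[0,N_2]$. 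The standard computation of tame inertia weights from the Hodge filtration then places the tame inertia weights of $(\bar\rho_\lambda^{\mathrm{ss}}\otimes\bar\epsilon_\ell^{N_1})|_{\Gal_{K_v}}$ in $[0,N_2]$.

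For (b), strict compatibility ensures that, for each $v\in S$ with $v\nmid\ell$, the Frobenius-semisimple Weil--Deligne datum $(r_v,N_v)$ attached to $\rho_\lambda|_{\Gal_{K_v}}$ is independent of $\lambda$; by Grothendieck's monodromy theorem together with Frobenius semisimplicity the image $r_v(I_v)$ is finite. Since $S$ is finite and these finite inertia images are $\lambda$-independent up to isomorphism, one can choose a single finite Galois extension $K'/K$ whose completion $K'_w/K_v$ at each $w\mid v\in S$ kills the finite action $r_v|_{I_v}$. For all but finitely many $\lambda$, the Grothendieck formula $\rho_\lambda(\sigma)=r_v(\sigma)\exp(N_v\cdot t_\ell(\sigma))$ on $I_v$ then forces $\bar\rho_\lambda|_{I_w}$ to land inside a unipotent subgroup of $\GL_n(\bar\F_\ell)$ for every $w\mid v\in S$.

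The final step is to pass from unipotent mod-$\ell$ inertia to triviality on the semisimplification. Characteristic polynomials being preserved under semisimplification, each $\sigma\in I_w$ acts on $(\bar\rho_\lambda^{\mathrm{ss}}|_{\Gal_{K'_w}})^{\mathrm{ss}}$ with characteristic polynomial $(T-1)^n$, so the image of $I_w$ in this semisimple representation consists of unipotent matrices and its Zariski closure is a unipotent algebraic subgroup of $\GL_n$. Since $I_w$ is normal in $\Gal_{K'_w}$, on any irreducible $\Gal_{K'_w}$-constituent $W$ of the semisimplification the fixed subspace $W^{I_w}$ is $\Gal_{K'_w}$-stable and, by Kolchin's theorem applied to that unipotent closure, nonzero; hence $W^{I_w}=W$, giving the required unramifiedness in (b). I expect the most delicate step to be the Fontaine--Laffaille application in (a), where one must simultaneously control the functor at every place $v\mid\ell$ and across all embeddings of $K_v$ into $\overline\Q_\ell$, and interpret ``tame inertia weights'' correctly when $K_v/\Q_\ell$ is ramified; it is this that forces the explicit lower bound on $\ell$ and hence the exclusion of finitely many $\lambda$.
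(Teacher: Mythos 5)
The paper does not give its own proof of this proposition; it is imported verbatim with the citation to [Hui23b, Proposition 2.11], so there is nothing in this source to compare against line by line. That said, your argument is correct and is the standard (and almost certainly the cited) route: Fontaine--Laffaille theory at $v\mid\ell$, relying on the $\lambda$-independence of $\tau$-Hodge--Tate numbers that comes from strict compatibility to produce uniform $N_1,N_2$, and for $v\nmid\ell$ the $\lambda$-independence of the Frobenius-semisimplified Weil--Deligne data, Grothendieck's local monodromy theorem, and the Kolchin/Clifford step to pass from unipotent inertia to unramifiedness of the semisimplification.

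One clarification on the worry you flag at the end: the case where $K_v/\mathbb{Q}_\ell$ is ramified is not actually delicate here, because only finitely many rational primes ramify in $K$; those $\ell$ are simply discarded together with the $\ell$ for which some $v\mid\ell$ lies in $S$ and the $\ell\le N_2+1$. After that exclusion every $K_v/\mathbb{Q}_\ell$ with $v\mid\ell$ is unramified, the classical Fontaine--Laffaille functor applies directly, and the tame inertia weights read off from the filtration jumps land in $[0,N_2]$ as you claim. The rest of (a), and all of (b), is sound as written.
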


\begin{theorem}{\cite[Theorem 1.2]{Hui23a}}
    Let $\{\rho_{\lambda}: \mathrm{Gal}_K \to \mathrm{GL}_n(E_{\lambda})\}_{\lambda}$ be a semisimple compatible system of $K$ defined over $E$ that satisfies the conditions (a) and (b) of Proposition~\ref{prop_strict-BT and PS}. For all but finitely many $\lambda$, if $\sigma_{\lambda}$ is a type $A$ irreducible subrepresentation of $\rho_{\lambda} \otimes_{E_{\lambda}} \overline{E}_{\lambda}$, then $\sigma_{\lambda}$ is residually irreducible.
\end{theorem}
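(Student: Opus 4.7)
The plan is to exploit the bounded tame inertia weights (a) and potential semistability (b) to force the semisimplified mod-$\ell$ image of $\sigma_\lambda$ into a reductive integral model whose generic fibre is the algebraic monodromy group of $\sigma_\lambda$, and then transfer irreducibility from generic to special fibre using the mod-$\ell$ representation theory of type $A$ groups. Both hypotheses (a) and (b) concern only inertia and Frobenii at primes away from $\ell$, so they restrict automatically to any $\Gal_K$-stable summand, and I may work directly with $\sigma_\lambda$ on its own underlying space $V_{\sigma_\lambda}$, with algebraic monodromy group $\bG_{\sigma_\lambda}\subset\GL(V_{\sigma_\lambda})$ whose derived group is $\SL_{n_1}\times\cdots\times\SL_{n_r}$ acting via an external tensor product of Schur modules.

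The first step is to build, for $\ell$ outside a finite exceptional set and for a chosen $\Gal_K$-stable $\overline{\Z}_\ell$-lattice $L\subset V_{\sigma_\lambda}$, a flat smooth reductive group scheme $\underline{\bG}\subset\GL_L$ over $\overline{\Z}_\ell$ whose generic fibre is $\bG_{\sigma_\lambda}$ and such that $\bar\sigma_\lambda^{\ss}(\Gal_K)\subset\underline{\bG}(\overline{\F}_\ell)$. The point is that (a) — uniform boundedness of tame inertia weights after the fixed cyclotomic twist $\epsilon_\ell^{N_1}$ — provides, via Fontaine-Laffaille / Raynaud-type bounds together with the scheme-theoretic closure construction, that the residual image sits in a hyperspecial integral model of $\bG_{\sigma_\lambda}$ once $\ell$ exceeds a constant depending only on $N_1,N_2,n$. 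The second step is to show this residual image is Zariski dense in the identity component $\underline{\bG}_{\overline{\F}_\ell}^{\circ}$: passing to the finite extension $K'/K$ from Proposition~\ref{prop_strict-BT and PS} eliminates ramification away from $\ell$, and then a Chebotarev / $\lambda$-independence argument applied to the characteristic polynomials of unramified Frobenii in the compatible system forces density.

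With density in hand, irreducibility follows from the key type $A$ input: a Schur module of $\SL_n$ remains absolutely irreducible as a representation of $\SL_n(\overline{\F}_\ell)$ provided $\ell$ exceeds a bound depending only on the highest weight, and uniform boundedness in (a) supplies such a bound uniformly across the compatible system. The external tensor product structure then transports this to an irreducible action of $\underline{\bG}^{\circ}(\overline{\F}_\ell)$ on $L/\ell L$, whence $\bar\sigma_\lambda^{\ss}$ is irreducible for all but finitely many $\lambda$. The main obstacle is the first step: the flat closure of a reductive subgroup need not be smooth or reductive modulo $\ell$, and producing the correct hyperspecial integral model requires delicate group-scheme arguments. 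The type $A$ hypothesis is genuinely used here to avoid pathologies occurring for other Dynkin types — most notably the failure of Weyl modules of classical groups to remain irreducible modulo small primes — which would otherwise obstruct the transfer of irreducibility even under the same bounds on tame inertia weights.
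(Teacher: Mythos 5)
The proposal captures the broad shape of the argument (control the residual image via (a) and (b), then use representation theory to transfer irreducibility from characteristic $0$ to characteristic $\ell$), but it misidentifies the correct objects and the genuine role of the type $A$ hypothesis, and the two hardest steps are left as acknowledged gaps.

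\textbf{Wrong starting object.} You set out to build a flat smooth reductive $\overline{\Z}_\ell$-group scheme $\underline{\bG}\subset\GL_L$ with generic fibre $\bG_{\sigma_\lambda}$ and then ask that the residual image land in its special fibre. But there is no a priori reason the semisimplified mod-$\ell$ image of $\sigma_\lambda$ should sit in any hyperspecial reduction of $\bG_{\sigma_\lambda}$, and the flat scheme-theoretic closure of $\bG_{\sigma_\lambda}$ in $\GL_L$ (depending on $L$) is typically neither smooth nor reductive; you flag this as the ``main obstacle'' but do not supply a way around it. The proof in \cite{Hui23a} instead proceeds in the opposite direction: one starts from the residual image $\overline{\rho}_\lambda^{\mathrm{ss}}(\Gal_K)\subset\GL_n(\overline{\F}_\ell)$ and attaches to it an \emph{algebraic envelope}, a connected reductive $\F_\ell$-group $\underline{G}_\lambda$ in which the residual image sits with bounded index, via the Nori/Larsen--Pink theory of finite subgroups of reductive groups. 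The problem is then to show this envelope is as large as $\bG_\lambda$, not to show a mod-$\ell$ reduction of $\bG_\lambda$ is attained.

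\textbf{Missing mechanism for ``bigness'', and a misstatement of (a).} You write that both (a) and (b) concern only inertia and Frobenii away from $\ell$, but condition (a) is specifically about tame inertia weights at places $v\mid\ell$. That is exactly the point: the boundedness of these weights is what prevents the residual image from collapsing at $\ell$, while (b) plus $\lambda$-independence controls the part away from $\ell$ after the finite base change $K'$. The actual argument in Hui's papers compares the \emph{formal bi-characters} of the envelope $\underline{G}_\lambda$ and of $\bG_\lambda$ using these two inputs (together with Theorem~\ref{thm_formal bi-char indep}); ``Chebotarev plus $\lambda$-independence of characteristic polynomials'' as you state it does not by itself give Zariski density in the correct target, and the bi-character machinery is the technical core you are eliding.

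\textbf{Wrong rationale for the type $A$ hypothesis.} You attribute the need for type $A$ to the failure of Weyl modules of non-type-$A$ classical groups to stay irreducible modulo small primes. That cannot be the distinction: Weyl modules of \emph{every} type become irreducible once $\ell$ is large compared with the highest weight, and the highest weights here are uniformly bounded because the dimension $n$ is fixed — so this obstruction vanishes for $\ell\gg0$ in all types. The actual reason type $A$ is needed is the determination problem: having matched the formal bi-character of the envelope with that of $\bG_\lambda$, one must deduce that the envelope embeds in $\GL_n$ in the same way, i.e., acts irreducibly. For type $A$ groups (outside the Larsen--Pink exceptional list), an irreducible faithful representation of a connected semisimple group is determined by its formal character (\cite[Theorem~4]{LP90}, together with Proposition~\ref{invroot} from \cite{Hui18}), so the comparison of bi-characters pins down the representation. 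For other Dynkin types, formal characters can fail to determine the embedding — this, not Weyl module reducibility, is what the hypothesis excludes.
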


To prove Theorem~\ref{thm_main theorem}(i),
we first state the following theorem on the representation theory
of a connected semisimple group due to Larsen--Pink, which we will apply to the irreducible representation
of $\bG_{\lambda_0}^{\der}$.

\begin{theorem}{\cite[Theorem 4]{LP90}}\label{thm_LP90}
    Let $T$ be a torus, and $\rho_T$ a faithful representation of $T$. Suppose that there exists a connected semisimple group $G$, a faithful irreducible representation $\rho$ of $G$, and an isomorphism between $T$ and a maximal torus of $G$, such that $\rho_T$ is the pull-back of $\rho$. Then the pair $(G,\rho)$ is uniquely determined up to isomorphism by $(T,\rho_T)$, except for the equivalences generated by the following exceptions.
    \begin{enumerate}[\normalfont(a)]
        \item The spin representation of $B_n$ restricts to the tensor product of the spin representations of $\sum B_{n_i} \subset B_n$, $\sum n_i=n$.
        \item For every $n \geq 2$ the representations of $C_n$ and $D_n$ with the highest weight $(k,k-1,\ldots,1,0,\ldots,0)$ for $1 <k\leq n-1$ have the same formal character.
        \item There are unique irreducible representations of $A_2$ and $G_2$ of dimension $27$, which have the same formal character.
        \item There are unique irreducible representations of $C_4$, $D_4$, and $F_4$, of dimension $4096=2^{12}$, which have the same formal character.
    \end{enumerate}
\end{theorem}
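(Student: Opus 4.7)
The plan is to reconstruct both the root datum of $G$ and the highest weight of $\rho$ purely from the multiset of weights that $\rho_T$ determines in $\mathbb{X}(T)$, thereby recovering $(G,\rho)$ up to isomorphism outside the listed exceptions. Because $\rho$ is irreducible with some highest weight $\lambda$, its weight set is $W$-stable and saturated, and $\lambda$ is a vertex of its convex hull in $\mathbb{X}(T)\otimes_{\Z}\R$; any such vertex pins down $\lambda$ up to the (still unknown) Weyl group action. The first substantive step is to extract the Weyl group $W\subset\Aut(\mathbb{X}(T))$ from the multiplicities and orbit structure of the weight multiset, since knowing $W$ together with dominance recovers the root system of $G$, and hence $G$ up to isogeny. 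Faithfulness of $\rho$ then rigidifies the isogeny class, because the kernel of $G^{\sc}\to G$ equals the intersection of the kernels of the weights of $\rho$ inside $Z(G^{\sc})$, data that depend only on $(T,\rho_T)$.

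After this reduction, the task becomes purely combinatorial: enumerate the pairs $(G,\rho)$ that realize a given formal character. I would proceed case by case across the simple types $A_n, B_n, C_n, D_n$ and the exceptional types, using the Weyl character formula and Freudenthal's multiplicity formula to compare weight multisets. Dimension matching and extremal weight analysis knock out most candidate coincidences, leaving only finitely many pairs at each rank that must be checked against one another directly.

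The main obstacle, and the heart of the proof, is showing that the four listed exceptions both genuinely occur \emph{and} exhaust all coincidences. Exception (a) reflects the fact that the spin representation of $B_n$ factors as an exterior tensor product under the diagonal embedding $\sum B_{n_i}\hookrightarrow B_n$, a direct computation with spinor modules. The sporadic identities (c) and (d) can be verified by explicit evaluation using standard tables of weight multiplicities. Case (b) requires matching the weight diagrams of the irreducible representations of $C_n$ and $D_n$ with highest weight $(k,k-1,\dots,1,0,\dots,0)$, reducing to a combinatorial identity for their Weyl-character-formula expressions. Proving exhaustiveness --- that no further exceptions exist --- is the delicate part; I would attempt this by an inductive argument via restriction to maximal-rank Levi subgroups, using dimension bounds to rule out coincidences outside the enumerated families, together with a careful tracking of which irreducibles can share a formal character only after accounting for the listed equivalences.
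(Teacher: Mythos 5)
This result is quoted in the paper directly from Larsen--Pink \cite[Theorem 4]{LP90}; the paper supplies no proof of its own, so there is no internal argument to compare against, only the original source and the representation-theoretic content of the statement.

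Your proposal has a genuine gap at the step you call the ``first substantive step'': extracting the Weyl group $W\subset\Aut(\mathbb{X}(T))$ from the multiplicities and orbit structure of the weight multiset. That extraction cannot be done in general, and the four listed exceptions are precisely the failures. In exception (c) the Weyl group of $A_2$ has order $6$ while that of $G_2$ has order $12$; in exception (d) the Weyl groups of $C_4$, $D_4$, $F_4$ have orders $384$, $192$, $1152$ respectively. These groups nonetheless share a $27$-dimensional (resp.\ $4096$-dimensional) formal character, so the weight multiset does not single out a Weyl group: the multiset is stable under the largest of the competing reflection groups, and the others sit inside it as subgroups, with no intrinsic way to tell which one is ``the'' Weyl group of the ambient semisimple $G$. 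Thus your proposed first step presupposes what is to be proved. A secondary error is the claim that knowing $W$ together with dominance recovers the root system: $B_n$ and $C_n$ have identical Weyl groups acting on the weight lattice, yet dual, non-isomorphic root systems, which is exactly the source of the $C_n$/$D_n$ ambiguity in exception (b). The actual proof in \cite{LP90} avoids this reconstruction entirely; it compares candidate pairs $(G,\rho)$ directly through the formal character (equivalently the invariant dimensions $\dim(V^{\otimes a}\otimes(V^\vee)^{\otimes b})^G$), bounds ranks and extremal weights, and only then runs a case analysis in which the exceptions surface as the residual coincidences. Your later enumeration plan is closer in spirit to that case analysis, but the exhaustiveness you flag as ``the delicate part'' is the whole theorem, and the suggested induction via restriction to maximal-rank Levi subgroups is not developed enough to close it.
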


Although Theorem~\ref{thm_main theorem}(c2) belongs to the exceptional case (a) above,
Theorem~\ref{thm_main theorem}(i) in this case has been proven in \cite[Proposition 4.16]{Hui23a}.
Consequently, we need only consider Theorem~\ref{thm_main theorem}(c1), which avoids the exceptional cases in Theorem~\ref{thm_LP90}. 
Since Patrikis--Taylor (Theorem~\ref{thm_PT15}) asserts that $\rho_{\pi,\lambda}$ 
(and thus $\bG_\lambda$) is irreducible for 
 infinitely many $\lambda$, we obtain Proposition \ref{infinitelambda} below by
 Serre and Hui (Theorem~\ref{thm_Serre} and Theorem~\ref{thm_formal bi-char indep}) and Larsen--Pink (Theorem~\ref{thm_LP90}).

\begin{proposition}\label{infinitelambda}
Let $\{\rho_{\pi, \lambda}: \mathrm{Gal}_{K} \to \mathrm{GL}_n(\overline E_{\lambda})\}_{\lambda}$ be 
		the compatible system satisfying conditions (a)--(c) in Theorem~\ref{thm_main theorem}. There exist infinitely many
		$\lambda\in\Sigma_E$ such that 
		the reductive subgroups $\bG_{\lambda,\C}$ and $\bG_{\lambda_0,\C}$ 
are conjugate in $\GL_{n,\C}$.
\end{proposition}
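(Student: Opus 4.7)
The plan is to combine three ingredients: the Patrikis--Taylor theorem (Theorem \ref{thm_PT15}) supplying infinitely many primes at which $\rho_{\pi,\lambda}$ is irreducible, the $\lambda$-independence of the formal bi-character (Theorem \ref{thm_formal bi-char indep}) pinning down the maximal torus data across $\lambda$, and the rigidity theorem of Larsen--Pink (Theorem \ref{thm_LP90}) upgrading an equality of formal characters to an isomorphism of pairs $(G,\rho)$.

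Concretely, I would proceed as follows. First, Theorem \ref{thm_PT15} provides a positive Dirichlet density set $\mathcal{L}$ of rational primes such that $\rho_{\pi,\lambda}$ is irreducible for every $\lambda\mid\ell\in\mathcal{L}$. For any such $\lambda$, Theorem \ref{thm_Serre}(i) gives that $\bG_\lambda$ is connected (since $\bG_{\lambda_0}$ is), and Schur's lemma applied to the irreducible inclusion $\bG_\lambda\subset\GL_{n,\C}$ puts $Z(\bG_\lambda)^\circ$ inside the scalar torus $\mathbb{G}_m\cdot I$ and keeps $\bG_\lambda^{\der}$ acting irreducibly on $\C^n$. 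Second, Theorem \ref{thm_formal bi-char indep} says the conjugacy class of $\bT_\lambda'\subset\bT_\lambda$ in $\GL_{n,\C}$ is independent of $\lambda$, so the formal character of $\bG_\lambda^{\der}$ on $\C^n$ and the dimension of $Z(\bG_\lambda)^\circ$ match those at $\lambda_0$. Third, condition (c1) is designed to rule out every exceptional equivalence in Theorem \ref{thm_LP90}: exceptions (b)--(d) involve types outside $A_1$, while exception (a) applied to a product-of-$A_1$ group would demand that every basic factor be $(\SL_2,\std)$, a configuration excluded by (c1). Hence Larsen--Pink forces $(\bG_\lambda^{\der},\std)\simeq(\bG_{\lambda_0}^{\der},\std)$, and since isomorphic irreducible $n$-dimensional representations produce conjugate images, some $g\in\GL_n(\C)$ satisfies $g\bG_\lambda^{\der}g^{-1}=\bG_{\lambda_0}^{\der}$. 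Fourth, after conjugating by $g$ the derived groups coincide, and $Z(\bG_\lambda)^\circ$ and $Z(\bG_{\lambda_0})^\circ$ are subtori of the one-dimensional $\mathbb{G}_m\cdot I$ of the same dimension, so they are both trivial or both the full scalar torus; this gives $g\bG_\lambda g^{-1}=\bG_{\lambda_0}$ inside $\GL_{n,\C}$ for every $\lambda$ above $\mathcal{L}$.

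The main obstacle will be verifying that assumption (c1) actually excludes every Larsen--Pink exception; this is precisely the motivation for the hypothesis in the first place, and it is the reason the $(\SO_4,\std)$ case (c2), which sits exactly inside exception (a), must be treated separately via \cite[Proposition 4.16]{Hui23a}. Once Larsen--Pink applies, the remaining steps are routine uses of Schur's lemma and the standard structure theory of connected reductive groups.
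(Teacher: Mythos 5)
Your proof is correct and follows the same approach the paper sketches: Patrikis--Taylor supplies infinitely many $\lambda$ of irreducibility, Serre and Hui give $\lambda$-independence of the formal bi-character, and Larsen--Pink upgrades this to conjugacy once condition (c1) rules out the exceptional formal-character collisions (with case (c2) deferred to the cited Proposition 4.16 of \cite{Hui23a}). Your explicit Schur-lemma bookkeeping for the connected center and your verification that a type $A_1$ group with at most one $(\SL_2,\std)$ basic factor cannot be involved in Larsen--Pink's exception (a) fill in details the paper leaves implicit.
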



 For a connected reductive group $G$ with a maximal torus $T$, denote 
by $R(G,T)$ the set of roots of $G$ with respect to $T$ in the character group $\mathbb{X}(T)$. 
We also need the following result for reductive groups.

\begin{proposition}\label{invroot}\cite[Corollary 3.9]{Hui18}
Let $G_1$ and $G_2$ be two connected reductive subgroup of $\GL_{n,\C}$ 
with maximal tori $T_1$ and $T_2$ respectively.
Denote by $T_i':=T_i\cap G_i^{\der}$ the maximal torus of the derived group $G_i^{\der}$ for $i=1,2$. 
If the chains $T_1'\subset T_1$ and $T_2'\subset T_2$ are conjugate in $\GL_{n,\C}$ (i.e., $G_1$ and $G_2$
have the same formal bi-character) such that the roots $R(G_1^{\der},T_1')\subset\mathbb{X}(T_1')$
and $R(G_2^{\der},T_2')\subset\mathbb{X}(T_2')$ are identified, then $G_1$ and $G_2$
are conjugate in $\GL_{n,\C}$.
\end{proposition}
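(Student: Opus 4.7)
The plan is to reduce by conjugation to the case of literally coinciding maximal-torus chains, and then deduce equality of the full groups from rigidity: a connected reductive subgroup of $\GL_{n,\C}$ containing a fixed chain $T' \subset T$ with a fixed root system in $\mathbb{X}(T')$ should be determined up to conjugation by the centralizer $Z_{\GL_n}(T)$.

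First, the formal bi-character hypothesis lets us replace $G_2$ by a conjugate so that $(T_1' \subset T_1) = (T_2' \subset T_2) =: (T' \subset T)$. The root hypothesis then reads $R(G_1^{\der}, T') = R(G_2^{\der}, T') =: R \subset \mathbb{X}(T')$. Since roots are trivial on the center $Z(G_i)^\circ$ and $T = Z(G_i)^\circ \cdot T'$ up to finite kernel, the restriction $\mathbb{X}(T) \to \mathbb{X}(T')$ is injective on roots of $G_i$, so $R$ lifts uniquely to a common set $\tilde R \subset \mathbb{X}(T)$ that is the root set of both $G_1$ and $G_2$ with respect to $T$.

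The key step is comparing Lie algebras inside $\mathfrak{gl}_n$ via the weight decomposition under $T$: for each $i$, $\mathrm{Lie}(G_i^{\der}) = \mathrm{Lie}(T') \oplus \bigoplus_{\alpha \in R} \mathfrak{g}_{i,\alpha}$, with $\mathfrak{g}_{i,\alpha} \subset \mathfrak{gl}_n(\tilde\alpha)$ one-dimensional. To bring these Lie algebras into coincidence by a $Z_{\GL_n}(T)$-conjugation, I would decompose $\C^n$ into $G_i^{\der}$-isotypic components. Since the formal bi-character and root system agree, and the isomorphism class of each irreducible constituent of $G_i^{\der}$ is determined by its character (rigidity of representations of almost simple groups, whose isomorphism types are pinned down by the root system), the two isotypic decompositions coincide as $T$-stable subspace decompositions; on each isotypic piece a $Z_{\GL_n}(T)$-element can be chosen that aligns the two actions.

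Assembling these alignments — most cleanly by reducing $G_i^{\der}$ to its almost simple factors via the exterior tensor decomposition — yields a single $Z_{\GL_n}(T)$-element conjugating $G_1^{\der}$ onto $G_2^{\der}$, after which $G_i = T \cdot G_i^{\der}$ together with $T_1 = T_2$ forces $G_1 = G_2$. The main obstacle is this last assembly: character-rigidity handles each almost simple factor in isolation, but one must verify that the factor-wise conjugating elements can be combined into a single global element of $Z_{\GL_n}(T)$ that leaves all other tensor factors fixed, which is precisely why passing through the exterior tensor decomposition of the tautological representation is essential.
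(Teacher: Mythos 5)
The paper does not prove this proposition; it is quoted from \cite[Corollary 3.9]{Hui18}, so there is no in-paper argument to compare against. Assessed on its own terms, your sketch has the right overall shape: reduce to a literally common chain $T'\subset T$; observe that prescribing $R\subset\mathbb{X}(T')$ fixes the root datum of $G_i^{\der}$ with respect to $T'$ (coroots are determined by the roots, so the Larsen--Pink ambiguities do not arise); identify $G_1^{\der}\cong G_2^{\der}$ by an isomorphism fixing $T'$ pointwise; and note that the two tautological representations on $\C^n$ then agree on $T'$, hence have equal formal character, hence are isomorphic, with the resulting intertwiner lying in $Z_{\GL_n}(T')$.

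There are, however, two genuine gaps. First, you assert that $R$ lifts uniquely to a \emph{common} set $\tilde R\subset\mathbb{X}(T)$ that is the root set of both $G_1$ and $G_2$ with respect to $T$. For each $i$ the lift $\tilde\alpha_i$ of $\alpha\in R$ is indeed unique, being pinned down by $\tilde\alpha_i|_{T'}=\alpha$ together with $\tilde\alpha_i|_{Z(G_i)^\circ}=1$; but this depends on the subtorus $Z(G_i)^\circ\subset T$, and nothing in the hypotheses immediately forces $Z(G_1)^\circ=Z(G_2)^\circ$. Equivalently, showing that the root lines $\mathfrak{g}_{1,\alpha}$ and $\mathfrak{g}_{2,\alpha}$ occupy the same $T$-weight space of $\mathfrak{gl}_n$ is a substantive step, not a formal consequence, and in fact it is essentially what needs to be proved. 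Second --- and you flag this yourself --- the representation-theoretic rigidity as set up only yields $\phi\in Z_{\GL_n}(T')$ with $\phi G_1^{\der}\phi^{-1}=G_2^{\der}$. Upgrading $\phi$ to lie in $Z_{\GL_n}(T)$ (or otherwise controlling $\phi T\phi^{-1}$), and then matching the full groups $G_i=Z(G_i)^\circ G_i^{\der}$ via the identity $Z(G_i)^\circ=(T\cap Z_{\GL_n}(G_i^{\der}))^\circ$, is the real content of the proposition; the isotypic-decomposition and exterior-tensor heuristics you invoke point in the right direction but, as written, do not close this gap. Until these two steps are supplied, the sketch is not a proof.
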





We will first prove that $\bG_{\lambda}^{\mathrm{der}}$ is of type $A_1$ for all $\lambda$, 
regardless of whether $\rho_{\pi,\lambda}$ is irreducible. After that, we will prove
 that the conjugacy class of $\bG_{\lambda,\C}\subset\GL_{n,\C}$
is independent of $\lambda$ by using Proposition \ref{invroot}. 
In short, it suffices to prove the following proposition.
This will be done in the next section.
\begin{proposition}\label{prop the same Lie type}
    Let $\{\rho_{\pi, \lambda}: \mathrm{Gal}_{K} \to \mathrm{GL}_n(\overline E_{\lambda})\}_{\lambda}$ be 
		the compatible system satisfying conditions (a)--(c) in Theorem~\ref{thm_main theorem} and $\bT_\lambda\subset\bG_\lambda$
		a maximal torus for all $\lambda$. 
		The following assertions hold.
		\begin{enumerate}[(i)]
		\item The algebraic monodromy group $\bG_\lambda$ is of type $A_1$ for all $\lambda$. 
		\item For every $\lambda$, the formal bi-characters $\bT_{\lambda,\C}'\subset\bT_{\lambda,\C}$
		and $\bT_{\lambda_0,\C}'\subset\bT_{\lambda_0,\C}$ are conjugate in $\GL_{n,\C}$ such that 
		the roots $R(\bG_{\lambda,\C},\bT_{\lambda,\C})$ 
		and $R(\bG_{\lambda_0,\C},\bT_{\lambda_0,\C})$ are identified
		in the character groups $\mathbb{X}(\bT_{\lambda,\C})\simeq \mathbb{X}(\bT_{\lambda_0,\C})$.
		\end{enumerate}
\end{proposition}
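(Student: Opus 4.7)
The plan is to combine the invariance of the formal bi-character (Theorem~\ref{thm_formal bi-char indep}) with Larsen--Pink rigidity (Theorem~\ref{thm_LP90}), anchored at the infinite set of ``good'' primes supplied by Proposition~\ref{infinitelambda}. For every $\lambda$, Theorem~\ref{thm_formal bi-char indep} furnishes a $\GL_{n,\C}$-conjugation sending the chain $\bT_\lambda'\subset \bT_\lambda$ to $\bT_{\lambda_0}'\subset \bT_{\lambda_0}$, which I would fix to identify the character lattices $\mathbb{X}(\bT_\lambda)\simeq \mathbb{X}(\bT_{\lambda_0})$ and $\mathbb{X}(\bT_\lambda')\simeq \mathbb{X}(\bT_{\lambda_0}')$. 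Under these identifications, the weight multiset $\Phi_\lambda$ of $\bT_\lambda'$ on $V_\lambda$ coincides with the weight multiset at $\lambda_0$, which has the rigid product shape $\{(a_1,\ldots,a_r) : |a_j|\le k_j,\ a_j\equiv k_j\pmod 2\}$ arising from $\Sym^{k_1}\otimes\cdots\otimes\Sym^{k_r}$, with at most one $k_j=1$ by (c1).

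For part (i), I would decompose $\rho_{\pi,\lambda}\otimes\overline E_\lambda=\bigoplus_i \sigma_{\lambda,i}$ into irreducibles and let $H_{\lambda,i}$ denote the connected reductive algebraic monodromy of $\sigma_{\lambda,i}$; each $\sigma_{\lambda,i}$ is $\bT_\lambda'$-stable, and the surjection $\bG_\lambda^{\der}\twoheadrightarrow H_{\lambda,i}^{\der}$ sends $\bT_\lambda'$ onto a maximal torus of $H_{\lambda,i}^{\der}$. Hence the formal character of the irreducible representation $(H_{\lambda,i}^{\der},\sigma_{\lambda,i})$ is the $\bT_\lambda'$-weight sub-multiset of $\Phi_\lambda$ supported on $\sigma_{\lambda,i}$. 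I would argue combinatorially that any $\bG_\lambda^{\der}$-stable direct summand of $V_\lambda$ has $\bT_\lambda'$-weights of symmetric-power tensor-product shape over some subset of the $r$ coordinate directions; Theorem~\ref{thm_LP90} then forces each $(H_{\lambda,i}^{\der},\sigma_{\lambda,i})$ to be of type $A_1^{s_i}$ for some $s_i$. Condition (c1) is used here precisely to exclude Larsen--Pink's exceptional case (a) (the $B_n$-spin/$A_1$-tensor ambiguity), while the specific shape of $\Phi_\lambda$ rules out the remaining exceptions (b)--(d). Since $\bG_\lambda^{\der}$ embeds into $\prod_i H_{\lambda,i}^{\der}$ with surjective projections and has semisimple rank $r$ by Theorem~\ref{thm_formal bi-char indep}, it must itself be of type $A_1^r$.

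For part (ii), knowing $\bG_\lambda^{\der}$ is of type $A_1^r$ with the common formal bi-character, I would intrinsically recover the splitting of $\bT_\lambda'$ into its $r$ rank-one direct factors from the factorization of $\Phi_\lambda=\Phi_{\lambda_0}$ as an exterior product of one-variable multisets. This splitting transports through the bi-character identification to match the $\SL_2$-factors of $\bG_\lambda^{\der}$ with those of $\bG_{\lambda_0}^{\der}$, and the root sets $R(\bG_\lambda^{\der},\bT_\lambda')=\{\pm 2\chi_j\}_{j=1}^r$ and $R(\bG_{\lambda_0}^{\der},\bT_{\lambda_0}')=\{\pm 2\chi_j\}_{j=1}^r$ coincide in the common character lattice. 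Passing along the restriction $\mathbb{X}(\bT_{\lambda,\C})\to \mathbb{X}(\bT_{\lambda,\C}')$, this yields the identification $R(\bG_{\lambda,\C},\bT_{\lambda,\C})=R(\bG_{\lambda_0,\C},\bT_{\lambda_0,\C})$ required in the statement, so that Proposition~\ref{invroot} delivers Theorem~\ref{thm_main theorem}(i).

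The main obstacle will be the combinatorial step inside part (i): showing that the $\bT_\lambda'$-weight sub-multiset of an arbitrary $\bG_\lambda^{\der}$-invariant summand of $V_\lambda$ inherits the tensor-product shape of $\Phi_{\lambda_0}$ along a subset of the $r$ coordinate axes. This rigidity is what one expects from the structure of $A_1^r$-representations, but since we cannot yet assume $\bG_\lambda^{\der}$ is of that type, one must argue directly from the combinatorics of $\Phi_{\lambda_0}$ together with condition (c1) to exclude every accidental Larsen--Pink coincidence before applying Theorem~\ref{thm_LP90}.
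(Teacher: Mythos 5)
Your proposal takes a genuinely different route from the paper, and while the part (ii) sketch is roughly in the right spirit, part (i) has a gap that I do not see how to close along the lines you suggest.

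The crux is the step you yourself flag as the ``main obstacle'': showing that for \emph{arbitrary} $\lambda$ (where $\rho_{\pi,\lambda}$ need not be irreducible), the $\bT_\lambda'$-weight sub-multiset of each irreducible $\bG_\lambda^{\der}$-summand inherits a symmetric-power tensor-product shape along coordinate axes. The difficulty is not merely technical. Theorem~\ref{thm_formal bi-char indep} gives you the weight multiset $\Phi_\lambda=\Phi_{\lambda_0}$ of the \emph{whole} space $V_\lambda$ on $\bT_\lambda'$, but it says nothing about how $V_\lambda$ decomposes into $\bG_\lambda^{\der}$-irreducibles, and that decomposition is precisely the input Larsen--Pink requires. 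Worse, a matching total weight multiset can genuinely be realized by a group of a different Lie type acting \emph{reducibly}: for instance, with $r=2$ and $k_1=k_2=2$ (so $n=9$ and condition (c1) is satisfied), the torus $\{\diag(uv,u,uv^{-1},v,1,v^{-1},u^{-1}v,u^{-1},u^{-1}v^{-1})\}\subset\GL_9$ is simultaneously the formal character of $\mathrm{SO}_3\times\mathrm{SO}_3$ acting irreducibly via $\Sym^2\otimes\Sym^2$ and of $\Sp_4$ acting reducibly on $\mathrm{std}\oplus\wedge^2_0$ (dimensions $4+5$). Theorem~\ref{thm_LP90} is a statement about \emph{irreducible} faithful representations, so it cannot be invoked on the full (possibly reducible) $\rho_{\pi,\lambda}$, and once you split into irreducibles you have implicitly used knowledge of $\bG_\lambda^{\der}$ that you do not yet possess. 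So the argument is circular, and I do not believe the combinatorial claim is salvageable as stated.

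The paper sidesteps this entirely. It never decomposes $\rho_{\pi,\lambda}$ or appeals to Larsen--Pink at the unknown primes. Instead, it exploits that $\mathrm{Lie}(\bG_{\lambda_0}^{\der})$ breaks into $r$ three-dimensional $\mathrm{SO}_3$-type pieces $\phi_{\lambda_0,i}$, shows via the Hodge--Tate lift (Proposition~\ref{prop_Hodge-Tate lift}) that each is regular (Lemma~\ref{lem_phi is regular}), and then uses Proposition~\ref{prop_SO3 potential automorphy} (enabled by taking $\ell_0$ large through Proposition~\ref{infinitelambda}) to extend each $\phi_{\lambda_0,i}$ to a strictly compatible system $\{\phi_{\lambda,i}\}_\lambda$. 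Since $\bG_{\phi_\lambda}\subset\mathrm{SO}_3^r$ with equality at $\lambda_0$, Theorem~\ref{thm_formal bi-char indep} forces $\bG_{\phi_\lambda}=\mathrm{SO}_3^r$ for \emph{all} $\lambda$. Forming the joint compatible system $\{\rho_{\pi,\lambda}\oplus\phi_\lambda\}_\lambda$ with monodromy $\bH_\lambda$ and applying Goursat's lemma to $\bH_\lambda\hookrightarrow\bG_\lambda\times\mathrm{SO}_3^r$, a rank count shows the kernel on the $\bG_\lambda$-side is a torus, whence $\bG_\lambda$ is of type $A_1$; a further argument (the kernel on the $\mathrm{SO}_3^r$-side is finite normal in an adjoint group, hence trivial, so $\bH_\lambda\cong\bG_\lambda$) then identifies the roots for part (ii). The key gain is that the compatible-system structure of the adjoint pieces lets the type-$A_1$ information propagate from $\lambda_0$ to all $\lambda$ without any combinatorial weight analysis at the primes where irreducibility is not yet known. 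If you want to complete a proof, I would recommend adopting this mechanism rather than trying to repair the weight-multiset argument.
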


\begin{remark}
Since the restriction map $\mathbb{X}(\bT_{\lambda,\C})\to \mathbb{X}(\bT_{\lambda,\C}')$ maps $R(\bG_{\lambda,\C},\bT_{\lambda,\C})$ bijectively to $R(\bG_{\lambda,\C}^{\der},\bT_{\lambda,\C}')$, the roots $R(\bG_{\lambda,\C}^{\der},\bT_{\lambda,\C}')$	and $R(\bG_{\lambda_0,\C}^{\der},\bT_{\lambda_0,\C}')$ are also identified in $\mathbb{X}(\bT_{\lambda,\C}')\simeq \mathbb{X}(\bT_{\lambda_0,\C}')$	by Proposition \ref{prop the same Lie type}(ii).
\end{remark}

\subsection{Proof of Proposition \ref{prop the same Lie type}}\label{subsec_Proof of Prop the same Lie type}

Suppose $H$ is a linear algebraic group over $\overline{\mathbb Q}_{\ell}$. A $H$-valued $\ell$-adic Galois representation $\rho: \mathrm{Gal}_{F_v} \to H(\overline{\mathbb Q}_{\ell})$
of a local field $F_v/\Q_\ell$ is said to be Hodge-Tate if for any morphism (of algebraic groups) $H \to \mathrm{GL}_{n,\overline{\mathbb Q}_{\ell}}$, the composite $\ell$-adic representation 
$$\rho: \mathrm{Gal}_{F_v} \to H(\overline{\mathbb Q}_{\ell}) \to \mathrm{GL}_n(\overline{\mathbb Q}_{\ell})$$ 
is a Hodge-Tate representation.

We first record the following proposition on the Hodge-Tate lift which will be used in the proof of Lemma \ref{lem_phi is regular}.

\begin{proposition}{\cite[Corollary 3.2.12]{Pat19}}\label{prop_Hodge-Tate lift}
    Let $H' \twoheadrightarrow H$ be a surjection of linear algebraic groups $H'$ and $H$ over $\overline{\mathbb Q}_{\ell}$ with the central torus kernel and $\rho: \mathrm{Gal}_{F_v} \to H(\overline{\mathbb Q}_{\ell})$ be a Hodge-Tate representation for a finite field extension $F_v/\mathbb Q_{\ell}$. Then there exists a Hodge-Tate lift $\tilde{\rho}:\mathrm{Gal}_{F_v} \to H'(\overline{\mathbb Q}_{\ell})$ so that the following diagram commutes.
    \[\begin{tikzcd}
	& {H'(\overline{\mathbb Q}_{\ell})} \\
	{\mathrm{Gal}_{F_v}} & {H(\overline{\mathbb Q}_{\ell})}
	\arrow[two heads, from=1-2, to=2-2]
	\arrow["{\tilde{\rho}}", dashed, from=2-1, to=1-2]
	\arrow["\rho"', from=2-1, to=2-2]
\end{tikzcd}\]
\end{proposition}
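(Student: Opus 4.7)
My plan is to construct the lift $\tilde{\rho}$ in two stages: first produce a continuous lift of $\rho$ without regard for the Hodge--Tate property, then twist by a character of $\Gal_{F_v}$ to secure that property. The central torus kernel $T := \ker(H' \to H)$ admits a filtration whose successive quotients are copies of $\G_m$, so by induction it suffices to treat the case
\[
1 \to \G_m \to H' \to H \to 1.
\]

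For the continuous lift, the obstruction class lives in a continuous $H^2(\Gal_{F_v}, \overline\Q_\ell^\times)$. Using divisibility of $\overline\Q_\ell^\times$ and the finite cohomological dimension of $\Gal_{F_v}$, this specific obstruction class can be killed---if necessary after restricting $\rho$ to a finite-index open subgroup of $\Gal_{F_v}$ where a lift is built by hand and then extended back via divisibility. The resulting continuous lift $\tilde{\rho}_0 : \Gal_{F_v} \to H'(\overline\Q_\ell)$ is unique up to twist by an element of $\Hom_{\mathrm{cts}}(\Gal_{F_v}, \overline\Q_\ell^\times)$.

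For the Hodge--Tate correction, I would pick a faithful embedding $H' \hookrightarrow \GL_m$ and attach to $\tilde{\rho}_0$ its Sen operator, viewed as an element of $\mathrm{Lie}(H') \otimes \widehat{\overline{F_v}}$ (well defined up to $H'$-conjugation). The $H'$-valued representation is Hodge--Tate precisely when this operator is conjugate to the differential of an algebraic cocharacter $\G_m \to H'$. Since $\rho$ is Hodge--Tate, a cocharacter $\mu: \G_m \to H$ witnessing this exists; because the extension is central, $\mu$ lifts to some $\tilde{\mu}: \G_m \to H'$ (the obstruction vanishes as an extension of $\G_m$ by $\G_m$ splits in characteristic zero). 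The discrepancy between the Sen operator of $\tilde{\rho}_0$ and $\mathrm{d}\tilde{\mu}$ therefore lies in $\mathrm{Lie}(\G_m) \otimes \widehat{\overline{F_v}}$. By local class field theory together with the classification of Hodge--Tate characters of $\Gal_{F_v}$ via algebraic characters of the Weil restriction $\Res_{F_v/\Q_\ell}\G_m$, this discrepancy is realized as the Sen weight of a Hodge--Tate character $\chi$ of $\Gal_{F_v}$; the twist $\tilde{\rho} := \tilde{\rho}_0 \otimes \chi^{-1}$ is then the desired Hodge--Tate lift.

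The principal technical difficulty will be the Hodge--Tate correction: formulating Sen's theory cleanly in the group-valued setting, and verifying that every Hodge--Tate discrepancy in $\mathrm{Lie}(\G_m) \otimes \widehat{\overline{F_v}}$ is realized by the Sen invariant of a genuine Hodge--Tate character of $\Gal_{F_v}$. The continuous-lifting step is comparatively routine and reduces to standard $H^2$-obstruction arguments for local Galois groups together with the divisibility of $\overline\Q_\ell^\times$.
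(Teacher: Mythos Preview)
The paper does not prove this proposition: it is quoted verbatim as \cite[Corollary 3.2.12]{Pat19} and used as a black box. So there is no ``paper's proof'' to compare against beyond the citation to Patrikis.

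Your sketch is broadly the argument that appears in Patrikis's monograph: reduce by d\'evissage to a $\G_m$-kernel, produce a continuous lift via Tate's vanishing $H^2_{\mathrm{cts}}(\Gal_{F_v},\overline{\Q}_\ell^\times)=0$, and then correct the Sen operator by a central twist. One point deserves care. You write that the discrepancy in $\mathrm{Lie}(\G_m)$ is realized as the Sen weight of a \emph{Hodge--Tate} character $\chi$; but the Sen operator of the raw continuous lift $\tilde\rho_0$ has no reason to be integral, so the discrepancy need not lie in $\Z$ for each embedding $\tau$, and a Hodge--Tate character can only absorb integer weights. The correct statement (and what Patrikis uses) is that for any prescribed tuple of elements of $\overline{\Q}_\ell$, indexed by the embeddings $\tau:F_v\hookrightarrow\overline{\Q}_\ell$, there is a continuous character $\chi:\Gal_{F_v}\to\overline{\Q}_\ell^\times$ whose generalized Hodge--Tate (Sen) weights are exactly that tuple; one sees this from local class field theory and the fact that any $\Q_\ell$-linear map $F_v\to\overline{\Q}_\ell$ exponentiates on a small open subgroup of $\mathcal O_{F_v}^\times$. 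Twisting by such a $\chi^{-1}$ then forces the Sen operator of $\tilde\rho_0\cdot\chi^{-1}$ to equal $d\tilde\mu$, hence to be Hodge--Tate. With that correction your outline matches the cited proof.
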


\vspace{.1in}

Let $\phi_{\lambda_0}$ be the Galois representation of $K$ given by 
the conjugate action of the image of $\rho_{\lambda_0}$ on $\mathrm{Lie}(\bG_{\lambda_0}^{\mathrm{der}})$ (adjoint representation). Since $\bG_{\lambda_0}^{\mathrm{der}}$ has rank $r\geq 1$ and of type $A_1$,
we obtain a decomposition
$$\phi_{\lambda_0}=\phi_{\lambda_0,1} \oplus \cdots \oplus \phi_{\lambda_0,r},$$ where each $\phi_{\lambda_0,j}$ is the $3$-dimensional adjoint representation on $\mathfrak{sl}_2$. 
The algebraic monodromy group of $\phi_{\lambda_0}$ is $\SO_3^r\subset\GL_{3r}$.

  
\begin{lemma}\label{lem_phi is regular}
    For $1\leq i \leq r$, the representation $\phi_{\lambda_0,i}$ is regular.
\end{lemma}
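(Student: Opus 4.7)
The plan is to compute Hodge--Tate weights via a cocharacter computation on the simply connected cover of $\bG_{\lambda_0}$, and then deduce regularity of each $\phi_{\lambda_0,i}$ from the regularity of $\rho_{\pi,\lambda_0}$. Fix a place $v$ of $K$ above $\ell_0$ and an embedding $\tau : K \hookrightarrow \overline{E}_{\lambda_0}$ extending $v$. Write $\bG_{\lambda_0}^{\der} = G_1 \cdots G_r$ as an almost direct product with each $G_i$ of type $A_1$, so that $\mathrm{Lie}(G_i)$ is the underlying space of $\phi_{\lambda_0,i}$. The tautological representation of $\bG_{\lambda_0}^{\der}$ decomposes as $\rho_1 \otimes \cdots \otimes \rho_r$ where $\rho_i = \mathrm{Sym}^{k_i}(\mathrm{std}_i)$ for some $k_i \geq 1$ (after pulling back to $\mathrm{SL}_2^{\,r}$).

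First I would produce a Hodge--Tate cocharacter on the simply connected cover. Consider the natural surjection $\tilde{\bG} := \mathbb{G}_m \times \mathrm{SL}_2^{\,r} \twoheadrightarrow \bG_{\lambda_0}$ whose kernel is central. Regarding $\rho_{\pi,\lambda_0}|_{\mathrm{Gal}_{K_v}}$ as a $\bG_{\lambda_0}$-valued Hodge--Tate representation, Proposition~\ref{prop_Hodge-Tate lift} provides a Hodge--Tate lift $\widetilde{\rho} : \mathrm{Gal}_{K_v} \to \tilde{\bG}(\overline{\mathbb{Q}}_{\ell_0})$. Let $\widetilde{\mu}_\tau : \mathbb{G}_m \to \tilde{\bG}_{\overline{\mathbb{Q}}_{\ell_0}}$ be a $\tau$-Hodge--Tate cocharacter of $\widetilde{\rho}$, and write its components as $(c_\tau, \mu_{\tau,1}, \ldots, \mu_{\tau,r})$ with $\mu_{\tau,i}$ conjugate to $\mathrm{diag}(t^{a_{\tau,i}}, t^{-a_{\tau,i}})$ for some integers $a_{\tau,i} \geq 0$.

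Next I would express all the relevant Hodge--Tate multisets in terms of the integers $a_{\tau,i}$. Composing $\widetilde{\mu}_\tau$ with the map $\tilde{\bG} \to \bG_{\lambda_0} \subset \mathrm{GL}_n$ gives a cocharacter whose weights on the tautological representation compute $\mathrm{HT}_\tau(\rho_{\pi,\lambda_0})$. Thus
\[
\mathrm{HT}_\tau(\rho_{\pi,\lambda_0}) \;=\; \Bigl\{\, c_\tau + \sum_{i=1}^r m_i \, a_{\tau,i} \;:\; m_i \in \{k_i, k_i-2, \ldots, -k_i\} \,\Bigr\},
\]
counted with multiplicity. On the other hand, the adjoint action of $G_i$ on $\mathrm{Lie}(G_i)$ factors through $G_i^{\mathrm{ad}} = \mathrm{PGL}_2$ and pulls back along $\mathrm{SL}_2 \twoheadrightarrow \mathrm{PGL}_2$ to $\mathrm{Sym}^2(\mathrm{std}_i)$; hence
\[
\mathrm{HT}_\tau(\phi_{\lambda_0,i}) \;=\; \{\,2 a_{\tau,i},\, 0,\, -2 a_{\tau,i}\,\}.
\]
So $\phi_{\lambda_0,i}$ is $\tau$-regular if and only if $a_{\tau,i} \neq 0$.

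Finally I would derive the lemma by contradiction. Suppose $a_{\tau,i_0} = 0$ for some $i_0$. Then each term in the multiset for $\mathrm{HT}_\tau(\rho_{\pi,\lambda_0})$ is independent of the choice of $m_{i_0}$, so every element appears with multiplicity at least $k_{i_0}+1 \geq 2$. This contradicts the regularity of $\rho_{\pi,\lambda_0}$ (which follows from $\pi$ being regular algebraic). Hence $a_{\tau,i} \neq 0$ for every $i$ and every $\tau$, proving the lemma. The main technical point is the existence of the Hodge--Tate lift $\widetilde{\rho}$, which is supplied by Proposition~\ref{prop_Hodge-Tate lift}; the rest is a direct weight bookkeeping on $\mathrm{SL}_2^{\,r}$.
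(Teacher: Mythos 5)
The approach and the weight bookkeeping are essentially the same as the paper's, but there is a genuine gap in your application of Proposition~\ref{prop_Hodge-Tate lift}. You take $\tilde{\bG}=\mathbb{G}_m\times\SL_2^r$ and assert that the surjection $\tilde{\bG}\twoheadrightarrow\bG_{\lambda_0}$ has central kernel, which is true — but the proposition requires the kernel to be a central \emph{torus}. In your setup the kernel of $(z,g_1,\dots,g_r)\mapsto z\cdot(\mathrm{Sym}^{k_1}(g_1)\otimes\cdots\otimes\mathrm{Sym}^{k_r}(g_r))$ is $\{(z,\epsilon_1,\dots,\epsilon_r):\epsilon_i\in\{\pm1\},\,z=\prod\epsilon_i^{-k_i}\}\cong\{\pm1\}^r$, a finite $2$-group, not a torus. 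For a local field $K_v$, $H^2(\Gal_{K_v},\mu_2)\cong\Z/2\neq 0$, so there is a genuine obstruction to lifting along a $\mu_2^r$-isogeny and one cannot simply invoke Proposition~\ref{prop_Hodge-Tate lift}.

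The paper circumvents exactly this by replacing $\mathbb{G}_m\times\SL_2^r$ with $\GL_2^r$ and twisting each $\mathrm{Sym}^{k_i}$ by an appropriate power of the determinant, $\det^{(1-k_i)/2}$ for $k_i$ odd and $\det^{-k_i/2}$ for $k_i$ even, so that the map $\GL_2^r\to\bG_{\lambda_0}$ is well-defined and has kernel isomorphic to $\mathbb{G}_m^{r-1}$, a torus. Once you make this substitution, the rest of your argument — extracting a Hodge--Tate cocharacter, expressing $\mathrm{HT}_\tau(\rho_{\pi,\lambda_0})$ as $\{c_\tau+\sum_i m_i a_{\tau,i}\}$, and the multiplicity-$\geq k_{i_0}+1$ contradiction when some $a_{\tau,i_0}=0$ — is correct and matches the paper's reasoning in substance. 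You should also say explicitly that you first twist $\{\rho_{\pi,\lambda}\}_\lambda$ by a power of the cyclotomic system so that $\bG_{\lambda_0}=\mathbb{G}_m\bG_{\lambda_0}^{\der}$; otherwise the target $\bG_{\lambda_0}$ need not contain $\mathbb{G}_m$ and the map you wrote is not even well-defined.
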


\begin{proof}
Recall that the adjoint representation on $\mathrm{Lie}(\bG_{\lambda_0}^{\mathrm{der}}) \leq \mathrm{Lie}(\mathrm{GL}_n(\overline{E}_{\lambda_0}))=\mathrm{End}(\overline{E}_{\lambda_0}^n)$ is a subrepresentation of the representation given by the conjugate action on $\mathrm{End}(\overline{E}_{\lambda_0}^n)$, namely, $\mathrm{End}(\rho_{\pi,\lambda_0})$. Since $\mathrm{End}(\rho_{\pi,\lambda_0})$ is isomorphic to $\rho_{\pi, \lambda_0} \otimes \rho_{\pi, \lambda_0}^{\vee}$, it follows that $\phi_{\lambda_0}$ is a subrepresentation of $\rho_{\pi, \lambda_0} \otimes \rho_{\pi, \lambda_0}^{\vee}$, which is de Rham at $v \mid \ell_0$. Thus, to verify regularity (Definition \ref{regularalg}), we need to show that the Hodge-Tate numbers of 
 $\phi_{\lambda_0,i}$ at $v \mid \ell_0$ are distinct. For each $v\mid \ell_0$, 
our strategy is to find two-dimensional Hodge-Tate Galois representations $f_i$ such that $\phi_{\lambda_0,i}|_{\mathrm{Gal}_{K_v}}=\mathrm{ad}^0(f_i)$ (trace zero adjoint representation) for all $1\leq i\leq r$. If we can show that $f_i$
has distinct Hodge-Tate numbers, then so does $\phi_{\lambda_0,i}|_{\mathrm{Gal}_{K_v}}$ which completes the proof.

By twisting the appropriate power of the system of $\ell$-adic cyclotomic characters to $\{\rho_{\pi,\lambda}\}_{\lambda}$, we may assume that 
$\bG_{\lambda_0}=\mathbb{G}_m\bG_{\lambda_0}^{\der}$.
Suppose $\bG_{\lambda_0}^{\der}\subset\GL_{n,\overline E_{\lambda_0}}$ is the image of 
 $$\mathrm{Sym}^{k_1} \otimes \cdots \otimes \mathrm{Sym}^{k_s} \otimes \mathrm{Sym}^{k_{s+1}} \otimes \cdots \otimes \mathrm{Sym}^{k_{s+t}}: \SL_2^r \to \GL_n$$ 
with $s+t=r$, $k_1,\ldots,k_s\geq 1$ odd and $k_{s+1},\ldots,k_{s+t}\geq 1$ even. 
When $k_i$ is even, the image $\mathrm{Sym}^{k_i}(\SL_2)$ is isomorphic to $\mathrm{SO}_3$ which is adjoint. 
Hence, we obtain a factorization  
\begin{align*}
   \bG_{\lambda_0}^{\der}= G_1\times G_2,
\end{align*}
where $G_1$ (if non-trivial) is a connected rank $s$ semisimple group of type $A_1$ whose center is of order $2$
and $G_2$ is isomorphic to $\SO_3^t$.
It follows that 
$$\bG_{\lambda_0}=\mathbb{G}_m\bG_{\lambda_0}^{\der}= (\mathbb{G}_m G_1) \times G_2.$$

Consider the following two central torus surjections
\begin{align*}
   \pi_1: \mathrm{GL}_2^s &\to  \mathbb{G}_m G_1, \\
    (g_1, \ldots, g_s) &\mapsto \bigotimes_{i=1}^s \det(g_{i})^{\frac{1-k_{i}}{2}}\mathrm{Sym}^{k_i}(g_i),
\end{align*}
and  
\begin{align*}
  \pi_2:  \mathrm{GL}_2^t &\to G_2\cong\SO_3^t,\\
    (g_{s+1}, \ldots, g_{s+t}) &\mapsto \bigotimes_{j=1}^t \det(g_{s+j})^{\frac{-k_{s+j}}{2}}\mathrm{Sym}^{k_{s+j}}(g_{s+j}),
\end{align*} 
which is well-defined since $k_1,\ldots,k_s$ are odd and $k_{s+1},\ldots,k_{s+t}$ are even.
Note that the kernel of $\pi_1$ and $\pi_2$ are respectively
$\{(c_1,...,c_s)\in\mathbb{G}_m^s:~ c_1\cdots c_s=1\}$ and $\mathbb{G}_m^t$.
These two surjections yield a central torus quotient 
\begin{equation}\label{pi12}
\pi_1\otimes\pi_2:\mathrm{GL}_2^r \to \bG_{\lambda_0}\subset\GL_{n,\overline E_{\lambda_0}}.
\end{equation}

By Proposition \ref{prop_Hodge-Tate lift}, for 
a prime $v$ above $\ell_0$, there is a Hodge-Tate lift $f_1 \oplus \cdots \oplus f_r: \mathrm{Gal}_{K_v} \to \mathrm{GL}_2(\overline{E}_{\lambda_0})^r$ and consequently, we have
\begin{align}\label{eqn_rho0 tensor product decomposition}
    \rho_{\pi, \lambda_0}|_{\mathrm{Gal}_{K_v}}=\bigotimes_{i=1}^s \det(f_{i})^{\frac{1-k_{i}}{2}}\mathrm{Sym}^{k_i}(f_i) \otimes \bigotimes_{j=1}^t \det(f_{s+j})^{\frac{-k_{s+j}}{2}}\mathrm{Sym}^{k_{s+j}}(f_{s+j})
\end{align}

Since \eqref{eqn_rho0 tensor product decomposition} has distinct Hodge-Tate numbers, the two dimensional $f_i$ and the three dimensional $\phi_{\lambda_0,i}|_{\mathrm{Gal}_{K_v}}=\ad^0(f_i)$ also have distinct Hodge-Tate numbers 
because the isomorphism $\mathrm{Lie}(\SL_2^r)\simeq \mathrm{Lie}(\bG_{\lambda_0}^{\der})$
induced by \eqref{pi12} is $\Gal_{K_v}$-equivariant.
\end{proof}

\begin{proof}[Proof of Proposition \ref{prop the same Lie type}(i)]
By Proposition \ref{infinitelambda}, we may assume $\ell_0$ is sufficiently large so that 
 Lemma \ref{lem_phi is regular} and Proposition \ref{prop_SO3 potential automorphy}
imply that $\phi_{\lambda_0,i}$ belongs to a strictly compatible system $\{\phi_{\lambda,i}\}_{\lambda}$ for all $1 \leq i \leq r$, after coefficient extension (Definition \ref{def_coe ext and res  scalar}) if necessary.
Thus we obtain a strictly compatible system 
\begin{equation}\label{csphi}
\{\phi_{\lambda}:=\phi_{\lambda,1}\oplus \cdots \oplus \phi_{\lambda,r}\}_{\lambda}
\end{equation}
with algebraic monodromy groups $\{\bG_{\phi_{\lambda}}\}_\lambda$. 
Since $\bG_{\phi_{\lambda}}\subset\SO_3^r\subset\GL_3^r$ for all $\lambda$
and $\bG_{\phi_{\lambda_0}}=\mathrm{SO}_3^r$, 
it follows from Theorem \ref{thm_formal bi-char indep} that
\begin{equation}\label{equal}
\bG_{\phi_{\lambda}}=\mathrm{SO}_3^r\hspace{.2in}\text{for all}~\lambda.
\end{equation}


Consider the strictly compatible system of $\ell$-adic representations
\begin{align*}
    \{\psi_{\lambda}:=\rho_{\pi, \lambda}\oplus \phi_{\lambda}\}_{\lambda}
\end{align*}
of $K$ defined over $E$ with the algebraic monodromy groups $\{\bH_{\lambda}\}_\lambda$. 
Since $\phi_{\lambda_0}$ is a subrepresentation of $\rho_{\pi, \lambda_0} \otimes \rho_{\pi, \lambda_0}^{\vee}$, it follows that $\bH_{\lambda_0}\simeq \bG_{\lambda_0}$ is connected with semisimple rank $r$. 
Hence for all $\lambda$, 
$\bH_{\lambda}$ is connected with semisimple rank $r$ (and reductive rank $r+1$) 
by Theorems \ref{thm_Serre} and \ref{thm_formal bi-char indep}.

Since $(\alpha_\lambda,\beta_\lambda):\bH_{\lambda}\hto \bG_\lambda\times\bG_{\phi_{\lambda}}$ that projects onto 
both $\bG_\lambda$ and $\bG_{\phi_{\lambda}}$, we obtain by Goursat's lemma an isomorphism
\begin{align}\label{eqn_Goursat isomorphism}
   \bG_{\lambda}/\iota_1^{-1}(\bH_{\lambda})\stackrel{\simeq}{\longrightarrow}  \bG_{\phi_{\lambda}}/\iota_2^{-1}(\bH_{\lambda})
\end{align}
where $\iota_1:\bG_\lambda \hookrightarrow \bG_\lambda\times\bG_{\phi_{\lambda}}$ 
and $\iota_2: \bG_{\phi_{\lambda}} \hookrightarrow \bG_\lambda\times\bG_{\phi_{\lambda}}$ 
are canonical embeddings, and also short exact sequences 
\begin{equation}\label{ses1}
1\to \iota_2^{-1}(\bH_{\lambda})\to \bH_{\lambda}\stackrel{\alpha_\lambda}{\rightarrow} \bG_{\lambda}\to 1
\end{equation}
and
\begin{equation}\label{ses2}
1\to \iota_1^{-1}(\bH_{\lambda})\to \bH_{\lambda}\stackrel{\beta_\lambda}{\rightarrow} \bG_{\phi_{\lambda}}\to 1.
\end{equation}
Since $\bH_{\lambda}$ and $\bG_{\phi_{\lambda}}$ have semisimple rank $r$, 
\eqref{ses2} implies that the identity component of $\iota_1^{-1}(\bH_{\lambda})$ is a torus. Then \eqref{eqn_Goursat isomorphism}
and \eqref{equal} imply that $\bG_{\lambda}$ is of type $A_1$.
\end{proof}




\begin{proof}[Proof of Proposition \ref{prop the same Lie type}(ii)]
Since $\bH_\lambda$ and $\bG_\lambda$ have the same rank and semisimple rank,
\eqref{ses1} implies that $\iota_2^{-1}(\bH_{\lambda})$ is finite.
As a finite normal subgroup of  $\bG_{\phi_{\lambda}}\simeq \SO_3^r$ (adjoint group) by \eqref{eqn_Goursat isomorphism},
$\iota_2^{-1}(\bH_{\lambda})$ must be trivial and $\alpha_\lambda:\bH_\lambda\to\bG_\lambda$ is an isomorphism.


Let $\bT_\lambda'\subset\bT_\lambda\subset\GL_n\times\GL_{3r}$ such that $\bT_\lambda$ (resp. $\bT_\lambda'$)
is a maximal torus of $\bH_\lambda$ (resp. $\bH_\lambda^{\der}$) for all $\lambda$.
Since $\{\psi_\lambda=\rho_{\pi,\lambda}\oplus\phi_\lambda\}_\lambda$ is a sum of two compatible systems,
the two chains
$\bT_{\lambda,\C}'\subset\bT_{\lambda,\C}$ and $\bT_{\lambda_0,\C}'\subset\bT_{\lambda_0,\C}$
are conjugate in $\GL_n(\C)\times\GL_{3r}(\C)$.
For all $\lambda$, consider the morphism
\begin{align*}
    \theta_\lambda: \bG_{\lambda} \stackrel{\alpha_\lambda^{-1}}{\rightarrow} 
		\bH_{\lambda} \stackrel{\alpha_\lambda\oplus\beta_\lambda}{\hto} 
		\bG_\lambda\times\mathrm{SO}_3^r \subset \GL_n\times \mathrm{GL}_{3r}.
\end{align*}
By Proposition \ref{prop the same Lie type}(i), the representation 
$\beta_\lambda\circ \alpha_\lambda^{-1}:\bG_{\lambda}\to\GL_{3r}$ (surjecting onto $\SO_3^r$) can only be 
the adjoint representation
on $\mathrm{Lie}(\bG_\lambda^{\der})$.
Therefore, $\alpha_{\lambda}(\bT_{\lambda,\C}')\subset\alpha_{\lambda}(\bT_{\lambda,\C})$
is conjugate to $\alpha_{\lambda_0}(\bT_{\lambda_0,\C}')\subset\alpha_{\lambda_0}(\bT_{\lambda_0,\C})$
such that roots $R(\bG_{\lambda,\C},\alpha_{\lambda}(\bT_{\lambda,\C}))$ 
and $R(\bG_{\lambda_0,\C},\alpha_{\lambda_0}(\bT_{\lambda_0,\C}))$ are identified,
as weights of $\theta_\lambda$ and $\theta_{\lambda_0}$ in the character groups $$\mathbb{X}(\alpha_{\lambda}(\bT_{\lambda,\C}))\simeq \mathbb{X}(\alpha_{\lambda_0}(\bT_{\lambda_0,\C}))$$ coincide.
\end{proof}

\section{Proof of Theorem~\ref{thm_main theorem-classification}}

In the previous section, we referred to Proposition~\ref{prop_Hodge-Tate lift} on the Hodge-Tate lift of local representation due to Patrikis. The following is an analogous theorem of this proposition, the global version of the crystalline lift, deduced from Patrikis' results \cite{Pat15, Pat16, Pat19}.

\begin{theorem}\label{thm_crystalline lift}
    Let $H' \twoheadrightarrow H$ be a surjection of linear algebraic groups $H'$ and $H$ over $\overline{\mathbb Q}_{\ell}$ with the central torus kernel and $\rho: \mathrm{Gal}_{\mathbb Q} \to H(\overline{\mathbb Q}_{\ell})$ be a continuous Galois representation that is unramified outside a finite set of primes and such that $\rho |_{\mathrm{Gal}_{\mathbb Q_{\ell}}}$ crystalline. Then there exists a Galois representation $\tilde{\rho}:\mathrm{Gal}_{\mathbb Q} \to H'(\overline{\mathbb Q}_{\ell})$ such that $\tilde{\rho}|_{\mathrm{Gal}_{\mathbb Q_{\ell}}}$ is crystalline and the following diagram commutes.
\[\begin{tikzcd}
	& {H'(\overline{\mathbb Q}_{\ell})} \\
	{\mathrm{Gal}_{\mathbb Q}} & {H(\overline{\mathbb Q}_{\ell})}
	\arrow[two heads, from=1-2, to=2-2]
	\arrow["{\tilde{\rho}}", dashed, from=2-1, to=1-2]
	\arrow["\rho"', from=2-1, to=2-2]
\end{tikzcd}\]
\end{theorem}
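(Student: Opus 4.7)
The plan is to combine Patrikis's global lifting machinery for $\ell$-adic Galois representations through central torus extensions with the local Hodge--Tate lift of Proposition~\ref{prop_Hodge-Tate lift}, and then promote a merely continuous global lift to a crystalline one by twisting by an algebraic Hecke character valued in the torus $Z := \ker(H' \twoheadrightarrow H)$. Throughout, regard the given short exact sequence $1 \to Z \to H' \to H \to 1$ as a central torus extension.

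First, invoking Patrikis's global lifting results from \cite{Pat15,Pat16}, I would produce a continuous global lift $\tilde{\rho}_0 : \Gal_\Q \to H'(\overline\Q_\ell)$ of $\rho$, unramified outside a finite set of primes. The obstruction to such a continuous lift lies in $H^2(\Gal_\Q, Z(\overline\Q_\ell))$, and over $\Q$ this class is shown in loc.\ cit.\ to vanish, possibly after enlarging the ramification set by finitely many auxiliary primes. In parallel, I would apply Proposition~\ref{prop_Hodge-Tate lift} (in the crystalline refinement of \cite[\S 3.2]{Pat19}) to $\rho|_{\Gal_{\Q_\ell}}$ to obtain a local crystalline lift $\tilde{\rho}_\ell : \Gal_{\Q_\ell} \to H'(\overline\Q_\ell)$. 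The two representations $\tilde{\rho}_0|_{\Gal_{\Q_\ell}}$ and $\tilde{\rho}_\ell$ both lift $\rho|_{\Gal_{\Q_\ell}}$, so they differ by a continuous character $\chi_\ell : \Gal_{\Q_\ell} \to Z(\overline\Q_\ell)$.

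Next, the crux of the argument is to realize $\chi_\ell$ as the restriction at $\ell$ of a continuous global character $\chi : \Gal_\Q \to Z(\overline\Q_\ell)$, unramified outside a finite set of primes, whose local Hodge--Tate cocharacter at $\ell$ matches that of $\chi_\ell$. Here the hypothesis $K = \Q$ is essential: every admissible Hodge--Tate cocharacter of the torus $Z$ is realized by an algebraic Hecke character of $\Q$ (by the dictionary between algebraic Hecke characters and locally algebraic $\ell$-adic characters of $\Gal_\Q$), and a Grunwald--Wang argument at a finite auxiliary set of primes absorbs the remaining finite-order discrepancy while preserving the crystalline property at $\ell$. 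Setting $\tilde{\rho} := \chi^{-1} \cdot \tilde{\rho}_0$, one obtains a continuous global lift of $\rho$, unramified outside a finite set, whose restriction to $\Gal_{\Q_\ell}$ differs from $\tilde{\rho}_\ell$ by a crystalline character and is therefore crystalline.

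The principal obstacle is the construction of the global character $\chi$ with the prescribed local Hodge--Tate behaviour at $\ell$. This is the technical heart of Patrikis's refinement of Wintenberger's lifting theorem from the Hodge--Tate to the crystalline regime, and is precisely where the global version of Proposition~\ref{prop_Hodge-Tate lift} is genuinely strengthened beyond the local statement; all the other steps are essentially formal consequences of the existence of such a $\chi$.
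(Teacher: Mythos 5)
The paper does not provide its own proof of this theorem; the stated proof is simply a citation to \cite[Theorem~2.12]{DWW24}, which in turn is deduced from Patrikis's lifting machinery in \cite{Pat15,Pat16,Pat19}. Your blind reconstruction is therefore not comparable line-by-line with the paper, but it does correctly reproduce the expected strategy behind the cited result: (1) produce a continuous global lift using the vanishing of the obstruction class in $H^2(\Gal_{\Q,S}, Z(\overline\Q_\ell))$ (Tate's theorem); (2) produce a local crystalline lift at $\ell$ via Proposition~\ref{prop_Hodge-Tate lift}; (3) note the two lifts at $\ell$ differ by a $Z$-valued character; (4) globalize that character via an algebraic Hecke character and twist. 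The step you identify as the crux --- realizing the prescribed Hodge--Tate cocharacter of $Z$ by a global geometric character, using $K=\Q$ and a Grunwald--Wang argument --- is indeed the technical heart of Patrikis's approach and is exactly where the hypothesis on the base field enters.

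One small imprecision: you suggest that the vanishing of the $H^2$-obstruction to a continuous lift may itself require enlarging the ramification set. For $K=\Q$ this is unnecessary --- Tate's vanishing theorem gives $H^2(\Gamma_{\Q,S},\overline\Q_\ell^\times)=0$ outright, so the continuous lift exists with the same ramification set. The auxiliary primes in Patrikis's (and Wintenberger's) arguments appear only in the later character-twisting step, as you correctly describe. This does not affect the validity of your outline.
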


\begin{proof}
    The proof is given in \cite[Theorem 2.13]{DWW24}.
\end{proof}

Suppose that $\{\rho_{\pi, \lambda}: \mathrm{Gal}_{\mathbb Q} \to \mathrm{GL}_n(\overline E_{\lambda})\}_{\lambda}$ is the compatible system attached to automorphic representation $\pi$ as stated in Theorem~\ref{thm_main theorem-classification}. By Proposition \ref{prop_essential self-duality}, $\pi$ is polarized
and $\{\rho_{\pi, \lambda}\}_\lambda$ is actually strictly compatible.
We may assume that $\ell_0$ is sufficiently large so that $\rho_{\pi,\lambda_0}|_{\Gal_{K_v}}$
is crystalline for all $v$ above $\ell_0$ since, by Theorem~\ref{thm_main thm wo polarizability}, $\bG_{\lambda,\C}\subset\GL_{n,\C}$ is 
independent of $\lambda$.
By twisting $\{\rho_{\pi, \lambda}\}_\lambda$ with $\{\epsilon_\ell^k\}_\ell$, which is the compatible system of some power of $\ell$-adic cyclotomic characters, we assume the algebraic monodromy $\bG_{\lambda}$ is equal to $\mathbb{G}_m \bG_\lambda^{\der}$
for all $\lambda$.

By \eqref{pi12} and  Theorem~\ref{thm_crystalline lift}, we obtain 
$f_{1,\lambda_0}\oplus\cdots\oplus f_{r,\lambda_0}:\Gal_\Q\to \GL_2^r(\overline E_{\lambda_0})$
and a character $\chi_{\lambda_0}:\Gal_\Q\to\GL_1(\overline E_\lambda)$
that are crystalline at $\ell_0$ such that 
\begin{align}\label{eqn_crystalline tensor decomposition}
 \rho_{\pi, \lambda_0}= \mathrm{Sym}^{k_1}(f_{1, \lambda_0}) \otimes \cdots \otimes \mathrm{Sym}^{k_r}(f_{r, \lambda_0}) \otimes \chi_{\lambda_0},
\end{align}
where $k_1,...,k_r\geq 1$ are independent of $\lambda_0$.
Moreover, $f_{i,\lambda_0}$ is regular for all $i$ since $\rho_{\pi, \lambda_0}$ is regular
and the algebraic monodromy group of $f_{1,\lambda_0}\oplus\cdots\oplus f_{r,\lambda_0}$ is of type $A_1$.


We claim that these representations $f_{1, \lambda_0},\ldots,f_{r, \lambda_0}$ are modular up to twists for sufficiently large $\ell_0$. To support our argument, we record the following two results; a criterion concerning the oddness of a continuous Galois representation, and the Fontaine-Mazur conjecture in the regular case which has been proved by Skinner--Wiles \cite{SW01}, Kisin \cite{Kis09}, Hu--Tan \cite{HT15}, and most recently by Pan \cite{Pan22}.

\begin{proposition}{\cite[Proposition 2.5]{CG13}}\label{prop_CG13}
Let $F$ be a totally real field. Suppose that $\ell>7$ is prime, and that $\rho: \mathrm{Gal}_F \to \mathrm{GL}_2(\overline{\mathbb Q}_{\ell})$ is a continuous representation. Assume the following conditions.
\begin{enumerate}[\normalfont(a)]
    \item $\rho$ is unramified outside of finitely many primes.
    \item $\mathrm{Sym}^2 \overline{\rho}^{\mathrm{ss}}|_{\mathrm{Gal}_{F(\zeta_{\ell})}}$ is irreducible.
    \item $\ell$ is unramified in $F$.
    \item For each place $v \mid \ell$ of $F$ and each $\tau: F_v \hookrightarrow \overline{\mathbb Q}_{\ell}$, $\mathrm{HT}_{\tau}(\rho|_{\mathrm{Gal}_{F_v}})$ is a set of $2$ distinct integers whose difference is less than $(\ell-2)/2$, and $\rho|_{\mathrm{Gal}_{F_v}}$ is crystalline.
\end{enumerate}
Then the pair $(\rho,\det \rho)$ is polarized and odd.
\end{proposition}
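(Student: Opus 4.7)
The plan is to treat essential self-duality and oddness separately. Essential self-duality is automatic for any two-dimensional $\rho$: the isomorphism $\wedge^2\rho\cong\det\rho$ furnishes a $\Gal_F$-equivariant perfect alternating pairing $\rho\otimes\rho\to\det\rho$, whence $\rho\cong\rho^\vee\otimes\det\rho$. So $(\rho,\det\rho)$ is essentially self-dual with multiplier $\det\rho$, using none of the hypotheses (a)--(d).

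For oddness, I need $\det\rho(c_v)=-1$ for every infinite place $v$ of $F$. Since $c_v^2=1$, the element $\rho(c_v)$ has eigenvalues in $\{\pm1\}$, so $\det\rho(c_v)\in\{\pm1\}$, and the ``even'' case $\det\rho(c_v)=+1$ occurs precisely when $\rho(c_v)\in\{\pm I\}$ is scalar. Suppose for contradiction that this happens at some $v_0\mid\infty$. The first reduction is that hypothesis (b) forces $\bar\rho^{\ss}|_{\Gal_{F(\zeta_\ell)}}$ to be absolutely irreducible: otherwise it would split as $\chi_1\oplus\chi_2$, and then $\Sym^2$ of it would be the visibly reducible sum $\chi_1^2\oplus\chi_1\chi_2\oplus\chi_2^2$, contradicting (b).

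Next I would exploit the Fontaine--Laffaille structure imposed by (c) and (d). The condition that $\ell$ is unramified in $F$ combined with the Hodge--Tate gap bound $<(\ell-2)/2$ places $\bar\rho|_{\Gal_{F_v}}$ in the Fontaine--Laffaille range at every $v\mid\ell$, so that $\det\bar\rho|_{\Gal_{F_v}}$ is explicitly $\bar\epsilon_\ell^{\,a_{\tau,1}+a_{\tau,2}}$ times an unramified character determined by the crystalline Frobenius eigenvalues. Globally, $\det\rho$ is therefore an algebraic Hecke character whose infinity type at each embedding $\tau:F\hookrightarrow\overline{\mathbb Q}_\ell$ equals the sum of the two $\tau$-Hodge--Tate weights of $\rho$, and its sign at each $c_v$ is pinned down by the parity of that sum together with a finite-order twist.

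The main obstacle---and the technical heart of the argument---is to couple this parity analysis with the residual irreducibility from (b) to rule out the hypothetical scalar action $\rho(c_{v_0})=\pm I$. The bound $\ell>7$ enters precisely to eliminate the small exceptional projective images ($A_4$, $S_4$, $A_5$, dihedral) in which a scalar complex conjugation could still coexist with the strong irreducibility of $\Sym^2\bar\rho^{\ss}|_{\Gal_{F(\zeta_\ell)}}$; outside these exceptional cases, a scalar $\rho(c_{v_0})$ would force a decomposition of $\bar\rho$ over $F(\zeta_\ell)$ that contradicts (b). I expect this last step---ruling out the exceptional image possibilities and then propagating the local Fontaine--Laffaille parity into the required global parity statement for $\det\rho$ at $c_{v_0}$---to be the hardest part, and it is where the argument of \cite[Proposition 2.5]{CG13} does its real work.
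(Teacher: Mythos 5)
Your treatment of essential self-duality is correct and matches the argument: $\rho\cong\rho^\vee\otimes\det\rho$ is automatic from $\wedge^2\rho=\det\rho$, with no hypotheses needed.

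For oddness, however, your proposed route (Fontaine--Laffaille computation of $\det\rho$ plus exclusion of small projective images) is not the approach of \cite[Proposition 2.5]{CG13}, and as sketched it contains a genuine gap that cannot be closed along the lines you describe. The Fontaine--Laffaille analysis at $v\mid\ell$ determines the Hodge--Tate weights and hence the infinity type of the algebraic Hecke character corresponding to $\det\rho$, but it does \emph{not} determine the value $\det\rho(c_{v_0})$ at a real place: the sign at infinity is a global constraint that is invisible to local $\ell$-adic data. Likewise, the large-image dichotomy you invoke is not where the obstruction lies; even with large image nothing local forces $\rho(c_{v_0})$ to be non-scalar, and indeed one cannot prove oddness of a geometric $2$-dimensional Galois representation by purely local or image-theoretic means (this would already give the ``no even geometric forms'' direction of Fontaine--Mazur for free).

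The actual argument in \cite{CG13} is global and automorphic. Conditions (a)--(d), together with $\ell>7$ (the prime bound needed for the relevant potential automorphy theorem in dimension $3$, namely $\ell\geq 2(3+1)=8$), are precisely the hypotheses needed to apply a potential automorphy theorem to the $3$-dimensional orthogonal representation $\mathrm{Sym}^2\rho$: condition (b) gives residual irreducibility over $F(\zeta_\ell)$, conditions (c) and (d) give the Fontaine--Laffaille regularity and potential diagonalizability at $\ell$, and (a) gives unramifiedness almost everywhere. One then obtains a totally real extension $F'/F$ over which $\mathrm{Sym}^2\rho$ is automorphic, and applies the sign theorem for regular algebraic polarized automorphic Galois representations (Bellaïche--Chenevier, Taylor, Caraiani--Le Hung): at each real place $v$ one has $\operatorname{tr}\bigl(\mathrm{Sym}^2\rho\bigr)(c_v)\in\{-1,+1\}$. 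Since $\rho(c_v)$ has eigenvalues $\alpha,\beta\in\{\pm 1\}$, one computes $\operatorname{tr}\bigl(\mathrm{Sym}^2\rho\bigr)(c_v)=2+\alpha\beta=2+\det\rho(c_v)$, which forces $\det\rho(c_v)=-1$. This is the step your proposal is missing: the conclusion about the sign at infinity comes from potential automorphy plus the sign theorem, not from local parity bookkeeping.
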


\begin{theorem}{\cite[Theorem 1.0.4]{Pan22}}\label{thm_Fontaine-Mazur}
    Let $\ell$ be an odd prime and $\rho: \mathrm{Gal}_{\mathbb Q} \to \mathrm{GL}_2(\overline{\mathbb Q}_{\ell})$ be a continuous irreducible representation such that it is odd, regular and potentially semi-stable at $\ell$. Then $\rho$ is modular up to twist.  
\end{theorem}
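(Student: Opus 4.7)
The plan is to follow the general Taylor--Wiles--Kisin modularity paradigm: prove residual modularity via Serre's conjecture (Khare--Wintenberger), then promote this to modularity of $\rho$ itself through a modularity lifting theorem, and finally address the residually reducible case separately by $p$-adic interpolation techniques. After twisting $\rho$ by a suitable character (hence the ``up to twist'' conclusion) we may assume the determinant has the form $\chi\cdot\epsilon^{k-1}$ for a finite-order character $\chi$ and integer $k\geq 2$ matching the Hodge--Tate--Sen weights, which is the form of determinants that arise from modular forms. The oddness hypothesis $\det\rho(c)=-1$ is essential here, since it is exactly the sign condition satisfied by modular Galois representations.

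The first main case is when $\bar\rho|_{\Gal_{\Q(\zeta_\ell)}}$ is absolutely irreducible. Serre's conjecture (Khare--Wintenberger) then provides a modular form $f$ whose associated residual representation is isomorphic to $\bar\rho$. To upgrade this to modularity of $\rho$, I would invoke a modularity lifting theorem in the style of Kisin, combined with the potential diagonalizability results of Barnet-Lamb--Gee--Geraghty--Taylor (cf.\ Theorem~\ref{thm_BLGGT14 Thm C}) to handle the potentially semistable local condition at $\ell$. The regularity hypothesis ensures distinct Hodge--Tate weights, so the local deformation rings at $\ell$ are geometrically tractable; soluble base change and descent can be used to align the local types of $\rho$ and of the candidate modular lift.

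The main obstacle is the residually reducible case, where $\bar\rho^{\ss}=\chi_1\oplus\chi_2$ and Khare--Wintenberger does not directly apply. The classical remedy is Skinner--Wiles, which handles the ordinary case by studying Hida families and pseudo-deformations; but without the ordinarity assumption one needs something stronger. My approach would be to follow Pan's strategy of using the $p$-adic local Langlands correspondence for $\GL_2(\Q_\ell)$ together with the Emerton--Gee--Caraiani--Scholze analysis of completed cohomology of modular curves: one realizes all Galois deformations satisfying the desired local conditions inside the ordinary part of completed cohomology, then uses a density argument for modular/classical points on the eigenvariety to show that every such $\rho$ occurs in (a twist of) completed cohomology, and hence comes from a classical modular form by local-global compatibility. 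Controlling the residually reducible locus on the eigenvariety without an ordinarity or distinctness hypothesis on $\chi_1,\chi_2$ is where I expect the deepest technical difficulty to concentrate, and it is precisely the point where substantial new ideas (beyond Skinner--Wiles) are required.
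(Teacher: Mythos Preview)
This theorem is not proved in the paper at all: it is quoted verbatim as \cite[Theorem 1.0.4]{Pan22} and used as a black box in the proof of Theorem~\ref{thm_main theorem-classification}. There is therefore no ``paper's own proof'' to compare your proposal against.

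That said, your outline is a reasonable high-level summary of the history behind this result (Skinner--Wiles, Kisin, Hu--Tan, Pan), and you correctly identify that the residually reducible, non-ordinary case is the crux and is handled by Pan via $p$-adic local Langlands and completed cohomology. But as a proof \emph{proposal} for inclusion here it is misplaced: the paper neither claims nor needs to reprove Fontaine--Mazur for $\GL_2$, and any attempt to do so would be orders of magnitude longer than the paper itself. The appropriate ``proof'' in this context is simply the citation.
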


\begin{proof}[Proof of Theorem~\ref{thm_main theorem-classification}]
It suffices to show that $f_{i,\lambda_0}$ is modular up to twist for $1\leq i \leq r$.
     If we can show that $f_{i,\lambda_0}$ is odd for $1\leq i \leq r$, we are 
		done by Theorem~\ref{thm_Fontaine-Mazur}.
By Proposition \ref{prop_CG13}, it suffices to verify conditions (a)--(d) for $f_{i,\lambda_0}$.
Conditions (a) and (c) are obvious. Condition (d) follows from the strict compatibility of $\{\rho_{\pi,\lambda}\}_{\lambda}$ (specifically Definition \ref{def_weakly compatible system}(b)) and \eqref{eqn_crystalline tensor decomposition}. For (b), note that the algebraic monodromy of $f_{i,\lambda_0}$ contains $\SL_2$ 
and $\mathrm{Sym}^2 f_{i,\lambda_0}$ up to character twist 
is a subrepresentation of $\rho_{\pi,\lambda_0}\otimes \rho_{\pi,\lambda_0}^\vee$.
Since $\{\rho_{\pi,\lambda}\otimes \rho_{\pi,\lambda}^\vee\}_\lambda$ is strictly compatible, 
 Proposition \ref{prop_strict-BT and PS} and \cite[Theorem 3.12(v)]{Hui23a} 
imply that $\mathrm{Sym}^2 \overline{f}_{i,\lambda_0}^{\mathrm{ss}}|_{\mathrm{Gal}_{\Q(\zeta_{\ell_0})}}$ is irreducible when $\ell_0$ is sufficiently large.
\end{proof}


\begin{remark}
    Theorem~\ref{thm_main theorem-classification} can be proven directly using the Serre conjecture, which was completely proved in \cite{KW09a,KW09b}, rather than relying on Theorem~\ref{thm_Fontaine-Mazur}. For further elaboration on this approach, we refer the reader to \cite[$\mathsection$4.5.1]{Hui23a}.
\end{remark}

\section*{Acknowledgments}
We would like to express our gratitude to Stefan Patrikis for answering our questions regarding the lifting of Galois representations, for suggesting Remark \ref{SP}, and for his interest in this work. We are grateful to Ariel Weiss for his interest and comments.
We thank Michael Larsen and Lian Duan for their interest in this work. We also appreciate the referee for helpful comments and suggestions.
C.-Y. Hui was partially supported by Hong Kong RGC (no. 17314522) and NSFC (no. 12222120).

\end{document}